\newtheorem{theorem}{Theorem}
\newtheorem{definition}[theorem]{Definition}
\newtheorem{lemma}[theorem]{Lemma}
\theoremstyle{remark}
\newtheorem{remark}[theorem]{Remark}
\def\be{\begin{equation}}
\def\ee{\end{equation}}
\def\bea{\begin{eqnarray}}
\def\eea{\end{eqnarray}}
\def\hsm1{\hspace{-1mm}}
\providecommand\underarrow@[3]{%
  \vtop{\ialign{##\crcr$\m@th\hfil#2#3\hfil$\crcr
  \noalign{\nointerlineskip\kern.12\baselineskip}#1#2\crcr}}}
\providecommand{\underrightarrow}{%
  \mathpalette{\underarrow@\rightarrowfill@}}
\providecommand\rightarrowfill@{\arrowfill@\relbar\relbar\rightarrow}
\providecommand\arrowfill@[4]{%
  $\m@th\thickmuskip0mu\medmuskip\thickmuskip\thinmuskip\thickmuskip
   \relax#4#1\mkern-7mu%
   \cleaders\hbox{$#4\mkern-2mu#2\mkern-2mu$}\hfill
   \mkern-7mu#3$%
}
\begin{document}

\title[Arnold diffusion in the planar elliptic restricted three-body problem]{Arnold diffusion in the planar elliptic restricted three-body problem: mechanism and numerical verification}
	
\author{Maciej J. Capi\'nski}

\address{Faculty of Applied Mathematics, AGH University of Science and Technology, Mickiewicza 30, 30-059 Krak\'{o}w, Poland}
\ead{maciej.capinski@agh.edu.pl}

\author{Marian Gidea}

\address{School of Civil Engineering and Architecture, Xiamen University of Technology, Fujian, China, and\\
Department of Mathematical Sciences, Yeshiva University, New York, NY 10018, USA}
\ead{marian.gidea@yu.edu}

\author{Rafael de la Llave}

\address{School of Mathematics, Georgia Institute of Technology, 686 Cherry St NW, Atlanta, GA 30332, United States}

\ead{rll6@math.gatech.edu}

\begin{abstract}
We present a diffusion mechanism for time-dependent perturbations of autonomous Hamiltonian systems introduced in \cite{GideaLlaveSeara14}.  This mechanism is based on shadowing of pseudo-orbits generated by two dynamics: an `outer dynamics', given by homoclinic trajectories to a normally hyperbolic invariant manifold, and an `inner dynamics', given by the restriction to that manifold. On the inner dynamics the only assumption is that it preserves area. Unlike other approaches, \cite{GideaLlaveSeara14} does not rely on the KAM theory and/or Aubry-Mather theory to establish the existence of diffusion. Moreover, it does not require to check twist conditions or non-degeneracy conditions near resonances. The conditions are explicit and can be checked by finite precision calculations in concrete systems (roughly, they amount to checking that Melnikov-type integrals do not vanish and that some manifolds are transversal).

As an application, we study the planar elliptic restricted three-body problem. We present a rigorous theorem that shows that if some concrete calculations yield a non zero value, then  for any  sufficiently small,  positive value of the eccentricity of the orbits of the main bodies, there are orbits of the infinitesimal body that exhibit a change of energy that is bigger than some fixed number, which is independent of the eccentricity.

We verify numerically these calculations for values of the masses close to that of the Jupiter/Sun system. The numerical calculations  are not completely rigorous, because we ignore issues of round-off error and do not estimate the truncations, but they are not delicate at all by the standard of numerical analysis. (Standard tests indicate that we get 7 or 8 figures of accuracy where 1 would be enough). The code of these verifications is  available. We hope that some full computer assisted proofs will be obtained in a near future since there are packages (CAPD) designed for problems of this type.
\end{abstract}

%Uncomment for PACS numbers title message
%\pacs{00.00, 20.00, 42.10}
% Keywords required only for MST, PB, PMB, PM, JOA, JOB?
%\vspace{2pc}
%\noindent{\it Keywords}: Article preparation, IOP journals
% Uncomment for Submitted to journal title message
%\submitto{\JPA}
% Comment out if separate title page not required
%TCIDATA{Version=5.00.0.2606}
%TCIDATA{LaTeXparent=0,0,MMFedit.tex}

\section{Introduction\label{sec:introduction}}

The goal of this paper is to show existence of Arnold diffusion in a problem
in celestial mechanics, namely the planar restricted three body problem. When the
 two main  bodies move in  circular orbits, 
% the angular momentum is conserved and 
the energy of the infinitesimal body in the rotating coordinates does not change. We want to show that when the main
bodies move in elliptic orbits (no matter how small is the eccentricity) we
can obtain changes of energy of the infinitesimal  body of order $1$. These effects
on comets have been considered in several papers \cite{GalanteK1,Sun,GalanteU}. 
In this paper we want to consider the changes of energies in the
family of Lyapunov periodic orbits around the equilibrium point $L_2$, which present interest in astrodynamics, 
and show that if the eccentricity of the orbits of the main bodies is not zero then there are
orbits of the infinitesimal body that for negative time accumulate in one Lyapunov orbit and for
positive time accumulate in another. The difference in energy of the initial
and final orbit is independent of the eccentricity of the orbits of the main bodies. Of course, we  also
obtain  that the energy can evolve in rather arbitrary ways.

In Section~\ref{sec:diffusion-mechanism} we review the general mechanism in
\cite{GideaLlaveSeara14}, that shows the existence of these orbits provided
that some finite number of very concrete conditions are verified.

In Section~\ref{sec:3bp} we study the planar elliptic restricted three body
problem (henceforth PER3BP) and work out the explicit form of the conditions
in \cite{GideaLlaveSeara14}. We take advantage of the fact that for the
planar circular restricted three body problem, the orbits can be computed
rather explicitly and we also take advantage of the fact that the PCR3BP is
reversible.  We formulate a theorem (Theorem \ref{th:diffusion-3bp}) which
ensures that the diffusion will take place under appropriate assumptions.

In Section~\ref{sec:num} we discuss the numerical verification of the
assumptions of Theorem~\ref{th:diffusion-3bp}.  We anticipate that the
conditions amount to check that certain manifolds intersect transversally, and to the calculation of several explicit integrals along trajectories of the unperturbed problem and checking that they are not zero.

These calculations are, by today's standards, rather straightforward,  can be
performed quite comfortably, and are very well into the safety region. We
have not performed fully rigorous computer assisted proofs, but standard
numerical analysis checks indicate that our calculations have errors
not bigger that $10^{-6}$ -- $10^{-8}$ times the significant value which we want to show it is not zero. 
We make the code available.

Obtaining a full computer assisted proof does not seem difficult since there
are libraries such as CAPD\footnote{%
http://capd.ii.uj.edu.pl/}, which are designed to do this,  and  indeed more
complicated problems have been dealt with computer assisted proofs \cite%
{Ca1, Ca3, Ca2,  Fig,Jay, W3, W2, WZ, W1}. We discuss the possibilities of a
computer assisted proof in Section~\ref{sec:future-work}.

\subsection{Other recent papers on the PER3BP}

The problem of the behavior of the PER3BP has a 300 years history since it
indeed appears prominently in \cite{Laplace1, Laplace2, Principa} and we
cannot attempt to do a survey (See \cite{Marchal} for a partial one). Also
the field of Arnold diffusion has experienced significant growth in the last
20 years.

Here, we present only some small remarks on papers which are closer to the
present paper.

A related result is presented in \cite{CZ}. There, the PER3BP is also
considered. The method relies upon a computation of a Melnikov integral
along a homoclinic orbit to one of the libration points of the PCR3BP. Such
homoclinic orbit exists only for a selection of specially chosen mass
parameters \cite{Simo}. In \cite{CZ} it is shown that we have diffusion for
those parameters, for with the mass of one of the two larger bodies is
sufficiently small. The diffusion shown in \cite{CZ} can only overcome small
ranges of energies, the size of which reduce to zero together with the size
of the perturbation. This is often referred to as `micro diffusion'. The
current paper is a major improvement with respect to \cite{CZ}. Firstly, we
do not need the homoclinic orbit to the libration point, and can thus work
with a broad range of mass parameters. For instance, in this paper we
consider the Jupiter-Sun system, but other systems, say, the Earth-Moon
system, also possess the same desirable properties and our mechanism can also
be used there. Secondly, we do not need to assume that the mass of one of
the two larger masses is sufficiently small. Lastly, and most importantly,
the mechanism from this paper ensures diffusion along a given range of
energies, which is independent of the size of the perturbation.

A different mechanism of diffusion is described in \cite{DGR15}, in the case
of the spatial restricted three-body problem (see also \cite{DGR10}). The paper  \cite{DGR15} also
focuses on one of the equilibrium points near which there exists a NHIM,
which is diffeomorphic to a three-dimensional sphere when restricted to an
energy level. The three-dimensional sphere can be parametrized by one action
and two angle coordinates. The paper \cite{DGR15} shows numerically that the
stable and unstable manifolds of the NHIM intersect transversally, and that
the dynamics restricted to the NHIM satisfies a twist condition. An
important observation is that the twist in this example is very weak. Then,
combining the scattering map associated to a choice of a transverse
homoclinic intersection and the twist map, it is shown numerically that
there exist trajectories whose projection onto the three-sphere change their
action coordinate by some `non-trivial' amount. Physically, these
trajectories change their out-of-plane amplitude of motion from close to
zero to close to the maximum possible out-of-plane amplitude for that
energy. There are significant differences between the diffusion mechanism of
this paper and that of \cite{DGR15}. First of all, we deal with a different
model, the PER3BP, which is given by a time-periodic Hamiltonian. Second,
for the presented mechanism it is not essential that the inner dynamics is
given by a twist map. Third, we use several scattering maps rather than a
single one. This allows us to minimize the time that we use the inner
dynamics, which is slower, and follow mostly the outer dynamics, which is
faster.

A recent important development in the search for diffusion in the restricted
three body problem is given in \cite{Kaloshin}, where a different setting is
considered. Instead of the Lyapunov orbits, the outer periodic orbits of the
system are considered. (Such orbits in real life can correspond to the
asteroid belt which is located between the orbits of Mars and Jupiter.) The
diffusion mechanism follows from careful analysis of the properties of the
circular problem, together with perturbation results, some of which (as is
the case in this paper) are only verified numerically. We have heard that 
the authors of \cite{Kaloshin} are also considering validating their results
to produce a computer assisted proof. One of the difficulties in such proof
could be the fact that the considered manifolds intersect at a very small
angle. A computer assisted validation of such a splitting could constitute a
numerical difficulty. In our approach we are free of this problem, since the
angle of intersection of manifolds considered by us is very large. (In
fact, the proof of transversality and the rigorous-computer-assisted
validation of such intersections have already been done in \cite{Ca1}.)

One can also mention the study of behavior of comets in the PRE3BP, which
are different orbits from the ones considered here. Early delicate numerical
studies were done by \cite{Sun}. Several remarkable studies were done in
\cite{GalanteK1, GalanteU}. Most of these studies are numerical, but \cite%
{GalanteK1} succeeded in presenting computer assisted proofs of most of the
steps needed to verify the assumptions of Mather's instability theorem.
Other related works include \cite{delaRosa,Xue}.

\subsection{A general mechanism for diffusion}

We consider a Hamiltonian system which possesses a normally hyperbolic
invariant manifold foliated by invariant tori, and whose stable and unstable
manifolds intersect transversally. We present a diffusion mechanism for time
dependant perturbations of such a system, showing that, for all sufficiently
small perturbations, there exist trajectories whose energy changes between
some given levels that are independent of the size of the perturbation.

The main ingredient of the mechanism is the scattering map. Given a
transverse intersection of the stable and unstable manifolds which also
satisfy an extra transversality condition, one can associate to it a mapping
from a subset of the NHIM to the NHIM called the scattering map. Given a
point in the intersection of stable and unstable manifolds, the scattering
map (quite analogous to the scattering matrix in Quantum mechanics)
associates to the asymptotic orbit in the past, the asymptotic orbit in the
future.

The scattering map enjoys remarkable geometric properties and, in the
perturbative setting can be computed very efficiently using Melnikov
integrals \cite{DLS1}.

The main assumption of \cite{GideaLlaveSeara14} is that we can find a finite
collection of scattering maps so that each map can move by a significant distance by 
applying   successive iterates. 
This is easy to verify in the
perturbative setting because the scattering maps are close to identity, so
that the problem is very similar to that of accessibility under several
controls \cite{Sussmann}. The main result of \cite{GideaLlaveSeara14} is
that, if the dynamics in the NHIM satisfies Poincar\'{e} recurrence, then,
one can find orbits that shadow the orbit of scattering maps.

This is enough to establish diffusion, because if the Poincar\'e recurrence
fails or if the perturbed manifold is only locally invariant, then, we
obtain diffusion by orbits on the locally invariant manifold.

Hence, verifying just the properties of the Melnikov integrals and the
transversality gives diffusion either by the jumping mechanism or by
diffusion on the normally hyperbolic manifold.

This method could be compared with other methods based either on geometric
considerations or on variational considerations. The most striking
difference is that the present method does not rely on sophisticated tools
such as Aubry-Mather and KAM theory. Hence, it is not needed to verify the
twist conditions nor other non-degeneracy conditions on the resonances.

As a matter of fact, for the problem at hand, we can indeed verify the twist
conditions and the hypothesis of the KAM theorem (at least for a range of
energies). Hence we conclude that for these ranges of energies the diffusion
along the stable manifold is impossible and we have diffusion using the
jumping mechanism.

%TCIDATA{Version=5.00.0.2606}
%TCIDATA{LaTeXparent=0,0,MMFedit.tex}

\section{Preliminaries}

\label{sec:prel} In this section we gather several standard results that are
used in the paper and set the notation. The techniques that are used in this
paper are mainly normally hyperbolic manifolds and the scattering map. KAM
plays a very minor role. Of course, this section can be omitted in a first
reading and can be used mainly as reference.

Let $f=(f_1,\ldots,f_m):M\rightarrow \mathbb{R}^{m}$ be a $C^{r}$ function,
where $M$ is an $n$-dimensional manifold (not necessarily compact). We write
\begin{equation*}
\Vert f\Vert _{C^{r}}=\sup_{p\in M}\max \left\{ \left\Vert \frac{\partial
^{|k|}f_{i}}{\partial x_{1}^{k_{1}}\ldots \partial x_{n}^{k_{n}}}%
(p)\right\Vert :i\in \{1,\ldots ,m\},|k|\leq r\right\} ,
\end{equation*}%
where $k=(k_{1},\ldots ,k_{n})$ is a multi-index and $|k|=k_{1}+\ldots
+k_{n} $. The $\Vert \cdot \Vert _{C^{r}}$ norm induces a topology on the
space of $C^{r}$ functions, which we refer to as the uniform $C^{r}$%
-topology. Note that for this space we not only require that the derivatives
are continuous but also that they are uniformly bounded.

When $M$ is compact, as it will be in our setting, the uniform $C^{r}$-topology is the same as the $C^{r}$%
-topology.

\subsection{Normally hyperbolic invariant manifolds}

The following results and definitions are standard. They were introduced in
\cite{Fenichel, HPS}. A more recent tutorial survey is \cite{nhims}. Modern
proofs which also lead to constructive algorithms appear in \cite%
{Capinski2009,CapinkiZ2011,CapinskiZ2015}.

\begin{definition}
\label{def:nhim}Let $\Lambda \subset \mathbb{R}^{n}$ be a manifold,
invariant under $f:\mathbb{R}^{n}\rightarrow \mathbb{R}^{n}$, i.e., $%
f(\Lambda )=\Lambda $, where $f$ is a $C^{r}$-diffeomorphism, $r\geq 1$. We
say that $\Lambda $ is a normally hyperbolic invariant manifold (with
symmetric rates) if there exists a constant $C>0,$ rates $0<\lambda <\mu
^{-1}<1$ and a splitting for every $\mathbf{x}\in \Lambda $%
\begin{equation*}
\mathbb{R}^{n}=E_{\mathbf{x}}^{u}\oplus E_{\mathbf{x}}^{s}\oplus T_{\mathbf{x%
}}\Lambda
\end{equation*}%
such that%
\begin{eqnarray}
v &\in &E_{\mathbf{x}}^{u}\Leftrightarrow \left\Vert Df^{n}(\mathbf{x}%
)v\right\Vert \leq C\lambda ^{|n|}\left\Vert v\right\Vert ,\quad n\leq 0,
\label{eq:rate-cond-nhim1} \\
v &\in &E_{\mathbf{x}}^{s}\Leftrightarrow \left\Vert Df^{n}(\mathbf{x}%
)v\right\Vert \leq C\lambda ^{n}\left\Vert v\right\Vert ,\quad n\geq 0,
\label{eq:rate-cond-nhim2} \\
v &\in &T_{\mathbf{x}}\Lambda \Leftrightarrow \left\Vert Df^{n}(\mathbf{x}%
)v\right\Vert \leq C\mu ^{|n|}\left\Vert v\right\Vert ,\quad n\in \mathbb{Z}.
\label{eq:rate-cond-nhim3}
\end{eqnarray}
\end{definition}

We note that the definition of normally hyperbolic invariant manifold with
symmetric rates is less general than that of \cite{Fenichel2,Fenichel4}.
Nevertheless it is very natural for symplectic systems.

In the sequel we will assume that $\Lambda $ is compact.
Let $U$ be a sufficiently small neighborhood of $\Lambda$. 
%\marginpar{MG: If $\Lambda$ compact you don't need uniformly $C^r$ in a neighborhood, it follows}

Given a normally hyperbolic invariant manifold we define its unstable and
stable manifold as%
\begin{eqnarray*}
W^{u}\left( \Lambda ,f\right) &=&\left\{ \mathbf{y}\in U\,|\,d\left( f^{k}(\mathbf{y}),\Lambda \right) \leq C_{\mathbf{y}}\lambda
^{\left\vert k\right\vert },k\leq 0\right\} , \\
W^{s}\left( \Lambda ,f\right) &=&\left\{ \mathbf{y}\in U\,|\,d\left( f^{k}(\mathbf{y}),\Lambda \right) \leq C_{\mathbf{y}}\lambda
^{k},k\geq 0\right\} .
\end{eqnarray*}%
The manifolds $W^{u}\left( \Lambda ,f\right) ,$ $W^{s}\left( \Lambda
,f\right) $ are foliated by%
\begin{eqnarray*}
W^{u}\left( \mathbf{x},f\right) &=&\left\{ \mathbf{y}\in U\,|\,d( f^{k}(\mathbf{y}),f^{k}(\mathbf{x})) \leq C_{%
\mathbf{x},\mathbf{y}}\lambda ^{\left\vert k\right\vert },k\leq 0\right\} ,
\\
W^{s}\left( \mathbf{x},f\right) &=&\left\{ \mathbf{y}\in U\,|\,
d( f^{k}(\mathbf{y}),f^{k}(\mathbf{x})) \leq C_{%
\mathbf{x},\mathbf{y}}\lambda ^{k},k\geq 0\right\} .
\end{eqnarray*}%\marginpar{MG: why $d$ in one formula and $\| \|$ in the other?}
%\marginpar{MG: in both definition I think that you must restrict to 
%$\mathbf{y}\in U$ if $M$ not compact}

Let
\begin{equation}
l<\min \left\{ r,\frac{\beta }{\alpha }\right\} .
\label{eq:nhim-smoothness}
\end{equation}%\marginpar{MG: $l$ should be strictly less then the min} 

The manifold $\Lambda $ is $C^{l}$ smooth, the manifolds $W^{u}\left(
\Lambda ,f\right) ,W^{s}\left( \Lambda ,f\right) $ are $C^{l-1}$ and $%
W^{u}\left( \mathbf{x},f\right) $, $W^{s}\left( \mathbf{x},f\right) $ are $%
C^{r}.$

Assume that there exist a $C^{r-1}$ diffeomorphism onto its image $k_{0}:%
\mathcal{N}\rightarrow \mathbb{R}^{n}$ with $k_{0}(\mathcal{N})=\Lambda$,
and a $C^{r-1}$ diffeomorphism $r_{0}:\mathcal{N}\rightarrow \mathcal{N}$
such that%
\begin{equation*}
f\circ k_{0}=k_{0}\circ r_{0}.
\end{equation*}%
Then we refer to $\mathcal{N}$ the reference manifold for $\Lambda$, to $%
k_{0}$ the parametrization of $\Lambda $, and to $r_{0}$ as the inner
dynamics induced by $f_{\mid \Lambda}$ on $N$. The following theorem gives a
persistence and smoothness result for perturbations of a map with a normally
hyperbolic invariant manifold.

\subsubsection{Persistence of normally hyperbolic manifolds}

The main property of invariant manifolds is that they persist under
perturbations. Even if the definitions presented so far work the same when $%
\Lambda$ has a boundary and when it does not, for the theory of persistence
there is a difference between the two cases. We will first state the theory
for the case of boundaryless manifolds and then describe the modifications
needed to allow for boundaries. The case of manifolds with boundary is
treated explicitly in \cite{Fenichel} (see also \cite{nhims} for a more
recent survey). In our case the manifolds do have boundary.

The following result is the main theorem of persistence of manifolds without
boundary. We have taken the statement from \cite[Theorem 23]{DLS1}, which
gathers it from different sources referenced there.

\begin{theorem}[Normally hyperbolic invariant manifold theorem]
\label{th:nhim-pert} Let $f_{\varepsilon}:\mathbb{R}^{n}\rightarrow \mathbb{R%
}^{n}$ be a family of $C^{r}$ diffeomorphisms with $r\geq 2.$ Assume that $%
\Lambda$ is a normally hyperbolic invariant manifold for $f_{0}$ with rates $%
\lambda,\mu$, a reference manifold $\mathcal{N},$ parametrization $k_{0}$
and inner dynamics $r_{0}.$ Then there exists an $\varepsilon_{0}>0$, such
that for $f_{\varepsilon}$ which are $\varepsilon_{0}$ close to $f_0$ in the
$C^r$-topology, there exist $C^{l-1}$ families $k_{\varepsilon}:\mathcal{N}%
\rightarrow\mathbb{R}^{n},r_{\varepsilon}:\mathcal{N}\rightarrow\mathcal{N}$
satisfying%
\begin{equation*}
f_{\varepsilon}\circ k_{\varepsilon}=k_{\varepsilon}\circ r_{\varepsilon}.
\end{equation*}

Moreover, there exists an open neighborhood $U$ of $\Lambda $ such that $%
k_{\varepsilon }(\mathcal{N})=\Lambda _{\varepsilon }\subset U$ is a
normally hyperbolic invariant manifold and%
\begin{equation*}
\Lambda _{\varepsilon }=\bigcap_{n\in \mathbb{Z}}f_{\varepsilon }^{n}(U).
\end{equation*}
\end{theorem}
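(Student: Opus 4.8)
\emph{Step 1 (adapted coordinates and reduction to a fixed-point problem).} The plan is the classical Hadamard graph-transform scheme (as in \cite{Fenichel,HPS}, collected in \cite{DLS1}). First I would pick a tubular neighbourhood of $\Lambda$ and use the invariant splitting $\mathbb{R}^{n}=E^{u}\oplus E^{s}\oplus T\Lambda$ to introduce coordinates $(x,v)$, with $x$ ranging over $\mathcal{N}$ (identified with $\Lambda$ via $k_{0}$) and $v=(v^{u},v^{s})$ in a bundle over $\mathcal{N}$ modelling the hyperbolic directions, in which $f_{0}$ has the block form $(x,v)\mapsto (r_{0}(x)+\dots,\,A(x)v+\dots)$ where $A$ contracts $E^{s}$ by $\lambda$, expands $E^{u}$ by $\lambda^{-1}$, and $\|Dr_{0}^{\pm 1}\|\le \mu$ on the centre. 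A candidate perturbed manifold is the image $k_{\varepsilon}(\mathcal{N})$ of a section $x\mapsto (x,\sigma_{\varepsilon}(x))$; the invariance requirement $f_{\varepsilon}(\Lambda_{\varepsilon})=\Lambda_{\varepsilon}$ becomes a fixed-point equation $\sigma=\mathcal{G}_{\varepsilon}(\sigma)$, where the graph transform $\mathcal{G}_{\varepsilon}$ pushes a section forward by $f_{\varepsilon}$ and reinterprets the image as a graph; for $f_{0}$ one has $\sigma_{0}\equiv 0$.

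\emph{Step 2 ($C^{0}$ persistence).} On the Banach space of continuous sections of small sup-norm I would check that $\mathcal{G}_{\varepsilon}$ is well defined, the base map $x\mapsto \pi_{\mathcal{N}}f_{\varepsilon}(x,\sigma(x))$ being a diffeomorphism of $\mathcal{N}$ because it is $C^{1}$-close to $r_{0}$, and that it is a contraction with rate controlled by $\lambda\mu<1$ — which is precisely the symmetric-rate gap $\lambda<\mu^{-1}$ of Definition~\ref{def:nhim} — plus terms small with $\varepsilon$. The unique fixed point $\sigma_{\varepsilon}$ depends continuously on $\varepsilon$ with $\sigma_{0}\equiv 0$, so $\Lambda_{\varepsilon}:=k_{\varepsilon}(\mathcal{N})$ lies in any prescribed neighbourhood $U$ of $\Lambda$ once $\varepsilon_{0}$ is small, and $\Lambda_{\varepsilon}$ inherits the rate conditions from $f_{0}$ by continuity, hence is normally hyperbolic. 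Setting $r_{\varepsilon}:=k_{\varepsilon}^{-1}\circ f_{\varepsilon}\circ k_{\varepsilon}$ gives the conjugacy $f_{\varepsilon}\circ k_{\varepsilon}=k_{\varepsilon}\circ r_{\varepsilon}$ by construction.

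\emph{Step 3 (regularity and the intersection characterization).} Lipschitz and then $C^{1}$ regularity of $\sigma_{\varepsilon}$ follow by running the same graph transform on spaces of Lipschitz / $C^{1}$ sections, the contraction rate on first derivatives being of order $\lambda\mu<1$; each further derivative of $\sigma$ costs an extra factor $\mu$, so the $k$-th derivative contracts at rate $\sim \lambda\mu^{k}$, which is $<1$ exactly when $k<\beta/\alpha$. This is why the scheme closes up to order $l<\min\{r,\beta/\alpha\}$ of \eqref{eq:nhim-smoothness} and produces a $C^{l}$ manifold, while treating $\varepsilon$ as an extra variable costs one derivative and yields the stated $C^{l-1}$ dependence of $k_{\varepsilon}$ and $r_{\varepsilon}$; all of this is made rigorous with the fiber contraction theorem of \cite{HPS}. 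Finally, $\Lambda_{\varepsilon}=\bigcap_{n\in\mathbb{Z}}f_{\varepsilon}^{n}(U)$: the inclusion $\subset$ is immediate from invariance, and for $\supset$ one writes a point $y\in\bigcap_{n}f_{\varepsilon}^{n}(U)$ in the adapted coordinates as $y=(x,v)$ and observes that boundedness of both forward and backward iterates, together with the persisting hyperbolic estimates, forces $v=\sigma_{\varepsilon}(x)$, i.e.\ $y\in\Lambda_{\varepsilon}$.

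\emph{Main obstacle.} The $C^{0}$ and Lipschitz parts are routine once the adapted coordinates are in place; the genuine difficulty is the sharp smoothness statement — obtaining exactly $C^{l-1}$ families with the ratio-of-rates restriction $l<\beta/\alpha$ and a simultaneous $C^{l-1}$ dependence on the parameter. This requires the fiber contraction theorem applied to the induced dynamics on jets and careful bookkeeping of how each additional derivative multiplies the contraction factor by $\mu$, so the iteration closes only below the critical order. Since the present paper quotes this result from \cite[Theorem~23]{DLS1}, these estimates are not reproduced here.
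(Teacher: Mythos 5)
Your sketch correctly outlines the classical Hadamard graph-transform and fiber-contraction argument of \cite{Fenichel,HPS}, which is indeed the proof behind this statement. The paper, however, does not prove this theorem at all — it is imported verbatim from \cite[Theorem 23]{DLS1}, which in turn gathers it from those same sources — so your outline matches the intended (cited) argument rather than replacing a proof given in the text.
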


%\begin{remark}
%\label{rem:modified-v-field-bd}
There are more general results on the
persistence of normally hyperbolic invariant manifolds that are not compact,
or of manifolds with boundary (see \cite%
{Bates99,Berger13,DLS3,Eldering12,Marco16}). In the case of manifolds with
boundary, the manifold that persists under the perturbation is, in general,
only locally invariant. The proof in that case involves extending the vector
field in such a way that the manifold we consider is an invariant manifold
without boundary. Then, applying the result of persistence of an invariant
manifold without boundary, one obtains the existence of a locally invariant
manifold. It is important to point out that the manifold thus produced is
not unique, as it depends on the extension considered. Also, when discussing
stable/unstable manifolds and fibres of the perturbed normally hyperbolic
invariant manifolds with boundary, we have to have in mind the
stable/unstable manifolds and fibres of the normally hyperbolic manifold
(without boundary) under the extended vector field. While the persistent
manifold is not unique, all orbits that remain in a small neighborhood of
the manifold and away from its boundary remain present in all extensions
that do not modify the dynamics in that neighborhood.
%\end{remark}

%\begin{remark}
We are not aware of any method ensuring that the extended vector field is
symplectic. On the other hand, we note that perturbations of symplectic
manifolds remain symplectic because the closedness of the form is automatic
and the non-degeneracy is true because perturbations of non-degenerate form
remain non-degenerate. In particular, when considering perturbations of
invariant manifolds in symplectic systems, we obtain that they are
symplectic and that we can apply KAM theory and variational methods on them.
This is the case considered in our application. An example of this is given
in the proof of Theorem \ref{th:kam-per3bp},
%\marginpar{
%MJC: sentence expanded.}
where we show an example of an extension that
ensures the symplecticity within the domain where KAM is applied.
%\end{remark}

%\begin{remark}
We also point out that if the locally invariant manifold was not invariant,
the goal of achieving diffusion would have been accomplished, since the
orbits which make the manifold not invariant (even after choosing a smaller
submanifold) have to move by order 1.
%\end{remark}

We also have the following:

\begin{lemma}
\cite{DLS1} \label{lem:nhim-symplectic} In the case that the map $f$
preserves a symplectic form $\omega$, we have that $\omega|_{\Lambda}$ is a
symplectic form and $f|_{\Lambda}$ preserves $\omega|_{\Lambda}$.
\end{lemma}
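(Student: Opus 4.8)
The plan is to check the three components of the claim separately: that $\omega|_{\Lambda}$ is closed, that $f|_{\Lambda}$ preserves it, and --- the only substantive point --- that $\omega|_{\Lambda}$ is non-degenerate at each point of $\Lambda$.

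The first two are formal. Writing $\iota\colon\Lambda\hookrightarrow\mathbb{R}^{n}$ for the inclusion, one has $\omega|_{\Lambda}=\iota^{*}\omega$, so $d(\omega|_{\Lambda})=\iota^{*}\,d\omega=0$; and since $\iota\circ(f|_{\Lambda})=f\circ\iota$, we get $(f|_{\Lambda})^{*}(\omega|_{\Lambda})=(f|_{\Lambda})^{*}\iota^{*}\omega=\iota^{*}f^{*}\omega=\iota^{*}\omega=\omega|_{\Lambda}$.

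For non-degeneracy I would fix $\mathbf{x}\in\Lambda$ and work with the splitting $\mathbb{R}^{n}=E^{u}_{\mathbf{x}}\oplus E^{s}_{\mathbf{x}}\oplus T_{\mathbf{x}}\Lambda$ from Definition~\ref{def:nhim}. The one estimate that does everything is the following: since $f^{*}\omega=\omega$ we have $\omega_{\mathbf{x}}(v,w)=\omega_{f^{n}(\mathbf{x})}\!\bigl(Df^{n}(\mathbf{x})v,\,Df^{n}(\mathbf{x})w\bigr)$ for all $n\in\mathbb{Z}$, and because $\Lambda$ is compact and $f$-invariant the base point $f^{n}(\mathbf{x})$ stays in $\Lambda$, where $|\omega|$ is uniformly bounded by some constant $\|\omega\|$; hence $|\omega_{\mathbf{x}}(v,w)|\le\|\omega\|\,\|Df^{n}(\mathbf{x})v\|\,\|Df^{n}(\mathbf{x})w\|$. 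Feeding in the rate conditions~(\ref{eq:rate-cond-nhim1})--(\ref{eq:rate-cond-nhim3}) and letting $|n|\to\infty$ in the appropriate time direction gives two conclusions: first, $E^{u}_{\mathbf{x}}$ and $E^{s}_{\mathbf{x}}$ are $\omega$-isotropic (for $v,w\in E^{u}_{\mathbf{x}}$ one uses $n\to-\infty$, where the right side is $O(\lambda^{2|n|})$, and symmetrically for $E^{s}_{\mathbf{x}}$); second, $T_{\mathbf{x}}\Lambda$ is $\omega$-orthogonal to $E^{u}_{\mathbf{x}}\oplus E^{s}_{\mathbf{x}}$ (for $v\in T_{\mathbf{x}}\Lambda$ and $w\in E^{u}_{\mathbf{x}}$ one uses $n\to-\infty$, where the right side is $O((\lambda\mu)^{|n|})$ and $\lambda\mu<1$ by the rate inequality $\lambda<\mu^{-1}$; for $w\in E^{s}_{\mathbf{x}}$ one uses $n\to+\infty$).

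To finish, I would argue by dimension. The symplectic-orthogonal complement of any subspace $V\subset\mathbb{R}^{n}$ has dimension $n-\dim V$, so $\dim\bigl(E^{u}_{\mathbf{x}}\oplus E^{s}_{\mathbf{x}}\bigr)^{\perp_{\omega}}=n-\dim\bigl(E^{u}_{\mathbf{x}}\oplus E^{s}_{\mathbf{x}}\bigr)=\dim T_{\mathbf{x}}\Lambda$, the last equality because the three summands in the splitting span $\mathbb{R}^{n}$. Combined with the orthogonality just established, this forces $T_{\mathbf{x}}\Lambda=\bigl(E^{u}_{\mathbf{x}}\oplus E^{s}_{\mathbf{x}}\bigr)^{\perp_{\omega}}$. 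Since a subspace $V$ is symplectic (i.e. $\omega|_{V}$ is non-degenerate) exactly when $V\cap V^{\perp_{\omega}}=\{0\}$, and here $\bigl(E^{u}_{\mathbf{x}}\oplus E^{s}_{\mathbf{x}}\bigr)\cap T_{\mathbf{x}}\Lambda=\{0\}$ by directness of the splitting, the subspace $E^{u}_{\mathbf{x}}\oplus E^{s}_{\mathbf{x}}$ is symplectic; hence so is its symplectic-orthogonal complement $T_{\mathbf{x}}\Lambda$. Thus $\omega|_{T_{\mathbf{x}}\Lambda}$ is non-degenerate, which together with closedness proves $\omega|_{\Lambda}$ is a symplectic form. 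The only place any care is needed is the estimate in the previous paragraph --- extracting isotropy and orthogonality from the rate conditions --- and even that is routine once one observes that $f^{n}$ pulls $\omega$ back to itself and that the orbit of $\mathbf{x}$ never leaves the compact manifold $\Lambda$; the rest is linear symplectic algebra.
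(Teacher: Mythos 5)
Your proof is correct, and the heart of it --- pushing $\omega(v,w)=\omega(Df^n v, Df^n w)$ through the three-way splitting and letting $|n|\to\infty$ with the rate conditions --- is exactly the paper's argument. Where you diverge is only in the \emph{finish}: the paper shows directly that $\omega(c,s)=\omega(c,u)=0$ for $c\in T_{\mathbf{x}}\Lambda$, $s\in E^s_{\mathbf{x}}$, $u\in E^u_{\mathbf{x}}$, and then observes that if in addition $\omega(c,\bar c)=0$ for all $\bar c\in T_{\mathbf{x}}\Lambda$ then $c$ is $\omega$-orthogonal to \emph{every} vector (decomposed along the splitting), so the global non-degeneracy of $\omega$ forces $c=0$. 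You instead bring in the isotropy of $E^u_{\mathbf{x}}$ and $E^s_{\mathbf{x}}$ (which the paper's route does not need), then pin down $T_{\mathbf{x}}\Lambda=(E^u_{\mathbf{x}}\oplus E^s_{\mathbf{x}})^{\perp_\omega}$ by dimension count, and close with the standard fact that the symplectic complement of a subspace transverse to its own complement is symplectic. Both finishes are valid; the paper's is shorter and avoids the dimension-counting and the isotropy facts, while yours makes the linear-symplectic structure of the splitting more visible. One small thing in your favor: you explicitly check that $\omega|_\Lambda$ is closed (via $\iota^*$) and that $f|_\Lambda$ pulls $\omega|_\Lambda$ back to itself, both of which the lemma asserts but the paper's proof only glances at or omits.
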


\begin{proof}
Since $d\omega=0$, it is clear that $d_{\Lambda}\omega|_{\Lambda}=0.$ To
conclude that $\omega_{\Lambda}$ is symplectic, it is sufficient to show
that $\omega|_{\Lambda}$ is not degenerate.

We observe that, if $c\in T_{x}\Lambda $, $s\in E_{x}^{s}$, $u\in E_{x}^{u}$%
, we have, by the preservation of the symplectic form for any $n,m\in
\mathbb{Z}$
\begin{eqnarray*}
\omega \left( c,s\right) &=&\omega \left( Df^{n}c,Df^{n}s\right) , \\
\omega \left( c,u\right) &=&\omega \left( Df^{m}c,Df^{m}u\right) .
\end{eqnarray*}%
Using the different rates (\ref{eq:rate-cond-nhim1}--\ref{eq:rate-cond-nhim3}%
) and taking limits $n\rightarrow \infty $ and $m\rightarrow -\infty $ we
obtain that%
\begin{equation*}
\omega \left( c,s\right) =\omega \left( c,u\right) =0
\end{equation*}%
Hence if $\omega |_{\Lambda }\left( c,\bar{c}\right) =0$ for all $\bar{c}\in
T_{x}\Lambda ,$ we conclude that $\omega \left( c,v\right) =0$ for all
vectors $v$. Using the nondegeneracy of $\omega $, we conclude that $c=0$.
Hence, we have shown that $\omega |_{\Lambda }$ is not degenerate.
\end{proof}

\subsection{Scattering map for normally hyperbolic invariant manifolds and
specially for Hamiltonian systems}

In this section, we review the scattering map, introduced in \cite{DLS3} to
quantify the properties of homoclinic excursions. A systematic exposition is
in \cite{DLS1}. Heuristic descriptions of its role in Arnol'd diffusion are
in \cite{DLS2,DelshamsGLS2008}.

\label{subsection:scattering} Consider a Hamiltonian system $H:\mathbb{R}%
^{2n}\rightarrow \mathbb{R}$. Let $\Phi _{t}$ denote the time shift map
along a trajectory of
\begin{equation*}
\mathbf{\dot{x}}=J\nabla H(\mathbf{x}),
\end{equation*}%
where
\begin{equation*}
J=\left(
\begin{array}{cc}
0 & -Id \\
Id & 0%
\end{array}%
\right) ,
\end{equation*}%
and $Id$ is the $n\times n$ identity matrix. Let us consider a fixed $t\in
\mathbb{R}$ and assume that $\Lambda $ is a normally hyperbolic invariant
manifold for $\Phi _{t}$, with a reference manifold $\mathcal{N}$,
parametrization $k_{0}:\mathcal{N}\rightarrow \mathbb{R}^{2n}$ and inner
dynamics $r_{0,t}:\mathcal{N}\rightarrow \mathcal{N}$,
\begin{eqnarray*}
\Lambda &=&k_{0}\left( \mathcal{N}\right) , \\
\Phi _{t}\circ k_{0} &=&k_{0}\circ r_{0,t}.
\end{eqnarray*}

Let us define two maps, which we refer to as the wave maps
\begin{eqnarray*}
\Omega _{+}& :W^{s}(\Lambda )\rightarrow \Lambda , \\
\Omega _{-}& :W^{u}\left( \Lambda \right) \rightarrow \Lambda ,
\end{eqnarray*}%
where $\Omega _{+}(\mathbf{x})=\mathbf{x}_{+}$ iff $\mathbf{x}\in
W^{s}\left( \mathbf{x}_{+}\right) $, and $\Omega _{-}(\mathbf{x})=\mathbf{x}%
_{-}$ iff $\mathbf{x}\in W^{u}\left( \mathbf{x}_{-}\right) .$

\begin{definition}
\label{def:homoclinic-channel} We say that a manifold $\Gamma $ is a
homoclinic channel for $\Lambda $ if the following conditions hold:

\begin{itemize}
\item[(i)] \label{itm:homoclinic-channel-c1} for every $\mathbf{x}\in \Gamma
$
\begin{eqnarray*}
T_{\mathbf{x}}W^{s}\left( \Lambda \right) \oplus T_{\mathbf{x}}W^{u}\left(
\Lambda \right) & =\mathbb{R}^{2n}, \\
T_{\mathbf{x}}W^{s}\left( \Lambda \right) \cap T_{\mathbf{x}}W^{u}\left(
\Lambda \right) & =T_{\mathbf{x}}\Gamma .
\end{eqnarray*}

\item[(ii)] \label{itm:homoclinic-channel-c2} the fibres of $\Lambda $
intersect $\Gamma $ transversally in the following sense%
\begin{eqnarray*}
T_{\mathbf{x}}\Gamma \oplus T_{\mathbf{x}}W^{s}\left( \mathbf{x}_{+}\right)
& =&T_{\mathbf{x}}W^{s}\left( \Lambda \right) , \\
T_{\mathbf{x}}\Gamma \oplus T_{\mathbf{x}}W^{u}\left( \mathbf{x}_{-}\right)
& =&T_{\mathbf{x}}W^{u}\left( \Lambda \right) ,
\end{eqnarray*}%
for every $\mathbf{x}\in \Gamma $,

\item[(iii)] the wave maps $(\Omega _{\pm })_{\mid \Gamma }:\Gamma
\rightarrow \Lambda $ are diffeomorphisms.
\end{itemize}
\end{definition}

\begin{definition}
Assume that $\Gamma$ is a homoclinic channel for $\Lambda$ and let
\begin{equation*}
\Omega_{\pm}^{\Gamma}:=\left( \Omega_{\pm}\right) |_{\Gamma}.
\end{equation*}
We define a scattering map $\sigma^{\Gamma}$ for the homoclinic channel $%
\Gamma$ as
\begin{equation*}
\sigma^{\Gamma}:=\Omega_{+}^{\Gamma}\circ\left( \Omega_{-}^{\Gamma}\right)
^{-1}:\Omega_{-}^{\Gamma}\left( \Gamma\right) \rightarrow\Omega_{+}^{\Gamma
}\left( \Gamma\right) .
\end{equation*}
\end{definition}

Two important properties of the scattering map will be used later.

First is the symplectic property of the scattering map. Let $\omega$ stand
for the standard symplectic form in $\mathbb{R}^{2n}$. If $%
\omega_{\mid\Lambda}$ is also symplectic, then the scattering map $%
\sigma^\Gamma$ is symplectic.

Second is an invariance property of the scattering map. Note that if $\Gamma
$ is a homoclinic channel, then for each $T$, $\Phi _{T}(\Gamma )$ is also a
homoclinic channel. The corresponding scattering map $\sigma ^{\Phi_T
(\Gamma )}$ is related to $\sigma ^{\Gamma }$ by the following relation
\begin{equation}
\sigma ^{\Phi _{T}(\Gamma )}=\Phi _{T}\circ \sigma ^{\Gamma }\circ \Phi
_{-T}.  \label{eq:invariance-property}
\end{equation}%
This says that, while $\sigma ^{\Gamma }$ and $\sigma ^{\Phi _{T}(\Gamma )}$
are technically different scattering maps, they are nevertheless conjugated
via the flow.

Let us consider a family of $C^r$ Hamiltonians $H_{\varepsilon}:\mathbb{R}%
^{2n}\rightarrow\mathbb{R}$, with $r\ge 2$, depending smoothly on $%
\varepsilon$, such that
\begin{equation*}
H_{0}=H.
\end{equation*}
Let $\Phi_{\varepsilon,t}$ stand for the time $t$ shift along a trajectory
of
\begin{equation*}
\mathbf{\dot{x}}=J\nabla H_{\varepsilon}(\mathbf{x}).
\end{equation*}
By the normally hyperbolic invariant manifold theorem (Theorem \ref%
{th:nhim-pert}) $\Lambda$ is perturbed to $\Lambda_{\varepsilon}$, a
normally hyperbolic invariant manifold for $\Phi_{\varepsilon,t}$.

The following theorem gives us a parametrization of $\Lambda_{\varepsilon},$
which preserves the symplectic form.

\begin{theorem}
\cite[Theorems 23,24,25]{DLS1}\label{th:k-epsilon} Assume that $\omega
|_{\Lambda}$ is non degenerate and let $\omega_{\mathcal{N}}:=k_{0}^{\ast
}\omega|_{\Lambda}$ . Then there exist an $\varepsilon_0>0$, such that if $%
H_{\varepsilon}$ are $\varepsilon_0$ close in $C^r$-topology, we have a
smooth family of maps $k_{\varepsilon }:\mathcal{N}\rightarrow\mathbb{R}%
^{2n} $, and a smooth family of flows $r_{\varepsilon,t}:\mathcal{N}\times%
\mathbb{R} \rightarrow\mathcal{N}$, such that%
\begin{equation*}
\Phi_{\varepsilon,t}\circ k_{\varepsilon}=k_{\varepsilon}\circ
r_{\varepsilon,t},
\end{equation*}
and%
\begin{equation*}
\Lambda_{\varepsilon}=k_{\varepsilon}\left( \mathcal{N}\right)
\end{equation*}
is a normally hyperbolic invariant manifold for $\Phi_{\varepsilon,t}.$
Moreover
\begin{equation*}
k_{\varepsilon}^{\ast}\omega|_{\Lambda_{\varepsilon}}=\omega_{\mathcal{N}},
\end{equation*}
is independent of $\varepsilon$, and
\begin{equation*}
r_{\varepsilon,t}^{\ast}\omega_{\mathcal{N}}=\omega_{\mathcal{N}},
\end{equation*}
for all $t$.
\end{theorem}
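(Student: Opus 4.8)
The plan is to take the ``raw'' parametrization produced by the persistence theorem and correct it by a Moser deformation so that the pulled-back area form becomes exactly the fixed reference form $\omega_{\mathcal{N}}$; once $k_{\varepsilon}$ is so normalized, the invariance $r_{\varepsilon,t}^{\ast}\omega_{\mathcal{N}}=\omega_{\mathcal{N}}$ comes essentially for free from Lemma~\ref{lem:nhim-symplectic}. First, fix $t$ and apply Theorem~\ref{th:nhim-pert} to the family $\Phi_{\varepsilon,t}$: for $|\varepsilon|$ small there are $C^{l-1}$ families $\tilde{k}_{\varepsilon}:\mathcal{N}\rightarrow\mathbb{R}^{2n}$, $\tilde{r}_{\varepsilon,t}:\mathcal{N}\rightarrow\mathcal{N}$ with $\tilde{k}_{0}=k_{0}$, $\Phi_{\varepsilon,t}\circ\tilde{k}_{\varepsilon}=\tilde{k}_{\varepsilon}\circ\tilde{r}_{\varepsilon,t}$ and $\Lambda_{\varepsilon}:=\tilde{k}_{\varepsilon}(\mathcal{N})=\bigcap_{n\in\mathbb{Z}}\Phi_{\varepsilon,t}^{n}(U)$ a normally hyperbolic invariant manifold. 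Since closedness of $\omega$ is automatic and nondegeneracy is an open condition, $\omega|_{\Lambda_{\varepsilon}}$ is a symplectic form on $\Lambda_{\varepsilon}$ for $\varepsilon$ small (this is Lemma~\ref{lem:nhim-symplectic} applied to $\Phi_{\varepsilon,t}$ together with stability of nondegeneracy). Set $\omega_{\varepsilon}:=\tilde{k}_{\varepsilon}^{\ast}(\omega|_{\Lambda_{\varepsilon}})$, a family of symplectic forms on $\mathcal{N}$, smooth in $\varepsilon$, with $\omega_{0}=\omega_{\mathcal{N}}$ and $\omega_{\varepsilon}\rightarrow\omega_{\mathcal{N}}$ as $\varepsilon\rightarrow 0$.

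Next, I would run Moser's isotopy argument on $\mathcal{N}$ to kill the discrepancy $\omega_{\varepsilon}-\omega_{\mathcal{N}}$. This closed $2$-form is exact: in the case at hand $\mathcal{N}$ is (a neighbourhood of) an annulus, which deformation retracts onto a circle, so $H^{2}_{\mathrm{dR}}(\mathcal{N})=0$; more invariantly, $\Lambda_{\varepsilon}$ is isotopic to $\Lambda_{0}$ and $\omega$ is closed, so every period of $\omega_{\varepsilon}-\omega_{\mathcal{N}}$ vanishes. Choose primitives $\beta_{\varepsilon}$ with $d\beta_{\varepsilon}=\omega_{\varepsilon}-\omega_{\mathcal{N}}$, depending smoothly on $\varepsilon$, with $\beta_{0}=0$. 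Using nondegeneracy of $\omega_{\varepsilon}$, solve the Moser equation $\iota_{X_{\varepsilon}}\omega_{\varepsilon}=-\dot{\beta}_{\varepsilon}$ for a vector field $X_{\varepsilon}$; the flow of $X_{\varepsilon}$ (with $\varepsilon$ playing the role of time) from $0$ to $\varepsilon$ produces a family of diffeomorphisms $\psi_{\varepsilon}:\mathcal{N}\rightarrow\mathcal{N}$, close to the identity, with $\psi_{0}=\mathrm{id}$ and $\psi_{\varepsilon}^{\ast}\omega_{\varepsilon}=\omega_{\mathcal{N}}$.

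Then set $k_{\varepsilon}:=\tilde{k}_{\varepsilon}\circ\psi_{\varepsilon}$; this satisfies $k_{\varepsilon}(\mathcal{N})=\Lambda_{\varepsilon}$, reduces to $k_{0}$ at $\varepsilon=0$, and
\begin{equation*}
k_{\varepsilon}^{\ast}(\omega|_{\Lambda_{\varepsilon}})=\psi_{\varepsilon}^{\ast}\tilde{k}_{\varepsilon}^{\ast}(\omega|_{\Lambda_{\varepsilon}})=\psi_{\varepsilon}^{\ast}\omega_{\varepsilon}=\omega_{\mathcal{N}},
\end{equation*}
which is independent of $\varepsilon$. Define $r_{\varepsilon,t}:=k_{\varepsilon}^{-1}\circ\Phi_{\varepsilon,t}\circ k_{\varepsilon}$, which makes sense because $\Lambda_{\varepsilon}$ is $\Phi_{\varepsilon,t}$-invariant; it is a genuine flow on $\mathcal{N}$ since $\{\Phi_{\varepsilon,t}\}_{t}$ is, and by construction $\Phi_{\varepsilon,t}\circ k_{\varepsilon}=k_{\varepsilon}\circ r_{\varepsilon,t}$. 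Finally, $\Phi_{\varepsilon,t}$ is a Hamiltonian flow, hence preserves $\omega$, hence by Lemma~\ref{lem:nhim-symplectic} preserves $\omega|_{\Lambda_{\varepsilon}}$; pulling this identity back by $k_{\varepsilon}$ and using the displayed equation gives $r_{\varepsilon,t}^{\ast}\omega_{\mathcal{N}}=\omega_{\mathcal{N}}$ for all $t$. Smooth dependence of $k_{\varepsilon}$ and of $r_{\varepsilon,t}$ on $\varepsilon$, and joint smoothness of $r_{\varepsilon,t}$ in $(t,\varepsilon)$, is inherited by composition from Theorem~\ref{th:nhim-pert}, the smooth parameter dependence of the Moser construction, and smooth dependence of ODE solutions on parameters.

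The main obstacle is the Moser step on a manifold with boundary and with only finite smoothness: one must produce the primitives $\beta_{\varepsilon}$ and solve $\iota_{X_{\varepsilon}}\omega_{\varepsilon}=-\dot{\beta}_{\varepsilon}$ with uniform control up to $\partial\mathcal{N}$ (so that $\psi_{\varepsilon}$ is genuinely a diffeomorphism of $\mathcal{N}$ onto itself), while tracking the derivative losses --- the raw form $\omega_{\varepsilon}$ built from $\tilde{k}_{\varepsilon}\in C^{l-1}$ is only $C^{l-2}$ --- against the smoothness budget (\ref{eq:nhim-smoothness}). In the concrete PER3BP application this is harmless: $\mathcal{N}$ is an annulus with an explicit area form, so the homotopy operator is an elementary integration and there is no cohomological obstruction; in general one extends to a slightly larger boundaryless reference manifold, runs Moser there, and restricts. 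Apart from this point, every step above is a formal manipulation of pullbacks.
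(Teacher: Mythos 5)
The paper itself does not prove Theorem~\ref{th:k-epsilon}: it is quoted verbatim from \cite[Theorems 23, 24, 25]{DLS1}, so there is no in-paper proof to compare against. That said, your Moser-deformation reconstruction is precisely the strategy used in \cite{DLS1}: take the raw parametrization $\tilde{k}_{\varepsilon}$ from the persistence theorem, observe that $\omega_{\varepsilon}:=\tilde{k}_{\varepsilon}^{\ast}(\omega|_{\Lambda_{\varepsilon}})$ is a smooth family of symplectic forms on $\mathcal{N}$ in the same cohomology class as $\omega_{\mathcal{N}}$ (by homotopy invariance of pullback along the isotopy $\Lambda_{\varepsilon}\simeq\Lambda_{0}$), and correct by the flow of the Moser vector field so that $k_{\varepsilon}^{\ast}(\omega|_{\Lambda_{\varepsilon}})=\omega_{\mathcal{N}}$. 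Your Moser equation $\iota_{X_{\varepsilon}}\omega_{\varepsilon}=-\dot{\beta}_{\varepsilon}$ and the bookkeeping $\psi_{\varepsilon}^{\ast}\omega_{\varepsilon}=\omega_{\mathcal{N}}$ are correct, and the final step $r_{\varepsilon,t}^{\ast}\omega_{\mathcal{N}}=\omega_{\mathcal{N}}$ does indeed drop out of Lemma~\ref{lem:nhim-symplectic} applied to $\Phi_{\varepsilon,t}$ and the pullback identity. You also correctly flag the two genuine technical points that must be handled with care — the Moser isotopy on a manifold with boundary (resolved, as you say, by running the argument on a slightly enlarged boundaryless reference manifold and restricting, which is consistent with how the paper treats boundary issues elsewhere, e.g.\ in the proof of Theorem~\ref{th:kam-per3bp}), and the loss of one derivative in forming $\omega_{\varepsilon}$ from a $C^{l-1}$ parametrization against the smoothness budget (\ref{eq:nhim-smoothness}). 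So this is a faithful reconstruction of the cited proof rather than a different route; no gaps, but be explicit in a final write-up that the exactness of $\omega_{\varepsilon}-\omega_{\mathcal{N}}$ is what makes Moser applicable and that the primitive $\beta_{\varepsilon}$ can be chosen smoothly in $\varepsilon$ (e.g.\ via a fixed homotopy operator on the enlarged reference manifold).
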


Transverse intersections of stable/unstable manifolds are robust under
perturbation. This means that the homoclinic channel $\Gamma $ is perturbed
to a homoclinic channel $\Gamma _{\varepsilon }$ for $\Phi _{\varepsilon
,t}. $ This leads to a scattering map map for $\Phi _{\varepsilon ,t}$
\begin{equation*}
\sigma _{\varepsilon }^{\Gamma _{\varepsilon }}:\Omega _{-}^{\Gamma
_{\varepsilon }}\left( \Gamma _{\varepsilon }\right) \rightarrow \Omega
_{+}^{\Gamma _{\varepsilon }}\left( \Gamma _{\varepsilon }\right) .
\end{equation*}%
It is convenient to express the scattering map as a map on the reference
manifold $\mathcal{N}$, by defining%
\begin{equation*}
s_{\varepsilon }=k_{\varepsilon }^{-1}\circ \sigma _{\varepsilon }\circ
k_{\varepsilon },
\end{equation*}%
\begin{equation*}
s_{\varepsilon }:\mathcal{N}\supset k_{\varepsilon }^{-1}\circ \Omega
_{-}^{\Gamma _{\varepsilon }}\left( \Gamma _{\varepsilon }\right)
\rightarrow k_{\varepsilon }^{-1}\circ \Omega _{+}^{\Gamma _{\varepsilon
}}\left( \Gamma _{\varepsilon }\right) \subset \mathcal{N}.
\end{equation*}%
Below we give a diagram which summarizes all the maps involved in the
definition%
\begin{equation*}
\begin{array}{ccc}
\Lambda _{\varepsilon } &
\begin{array}{ccc}
\overset{\left( \Omega _{-}^{\Gamma _{\varepsilon }}\right) ^{-1}}{%
\longrightarrow } & \Gamma _{\varepsilon } & \overset{\Omega _{+}^{\Gamma
_{\varepsilon }}}{\longrightarrow }%
\end{array}
& \Lambda _{\varepsilon } \\
\quad \uparrow k_{\varepsilon } &  & \quad \uparrow k_{\varepsilon } \\
\mathcal{N} & \underrightarrow{\quad \quad s_{\varepsilon }\quad } &
\mathcal{N}%
\end{array}%
\end{equation*}

\begin{theorem}
\label{th:Melnikov-potential}\cite[Theorem 32]{DLS1} The map $s_{\varepsilon
}$ is a symplectic map. Moreover,%
\begin{equation*}
s_{\varepsilon }=s_{0}+\varepsilon J\nabla S_{0}\circ s_{0}+O\left(
\varepsilon ^{2}\right),
\end{equation*}%
with%
\begin{eqnarray*}
S_{0}\left( \mathbf{x}\right) =&\lim_{T\rightarrow +\infty }\int_{-T}^{0}%
\frac{dH_{\varepsilon }}{d\varepsilon }|_{\varepsilon =0}\circ \Phi
_{u}\circ \left( \Omega _{-}^{\Gamma }\right)^{-1}\circ (\sigma^\Gamma)^{-1}
\circ k_{0}(\mathbf{x)} \\
& \qquad \qquad \qquad -\frac{dH_{\varepsilon }}{d\varepsilon }%
|_{\varepsilon =0}\circ \Phi _{u}\circ \left( \sigma ^{\Gamma }\right)
^{-1}\circ k_{0}(\mathbf{x)}du \\
& +\lim_{T\rightarrow +\infty }\int_{0}^{T}\frac{dH_{\varepsilon }}{%
d\varepsilon }|_{\varepsilon =0}\circ \Phi _{u}\circ \left( \Omega
_{+}^{\Gamma }\right) ^{-1}\circ k_{0}(\mathbf{x)} \\
& \qquad \qquad \qquad -\frac{dH_{\varepsilon }}{d\varepsilon }%
|_{\varepsilon =0}\circ \Phi _{u}\circ k_{0}(\mathbf{x})du.
\end{eqnarray*}
\end{theorem}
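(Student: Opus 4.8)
The plan is to prove the two assertions in turn: symplecticity of $s_{\varepsilon}$, then the first-order expansion, the latter being where essentially all of the work is.

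\emph{Symplecticity.} The idea is that the wave maps are symplectic reductions. The manifold $W^{s}(\Lambda)$ is coisotropic in $(\mathbb{R}^{2n},\omega)$, $E^{s}$ is isotropic, and the kernel of $\omega|_{W^{s}(\Lambda)}$ is exactly the tangent distribution of the stable fibration $\{W^{s}(\mathbf{x}_{+})\}$: this follows by a dimension count together with the computation already carried out in the proof of Lemma~\ref{lem:nhim-symplectic}, where the rate conditions (\ref{eq:rate-cond-nhim1})--(\ref{eq:rate-cond-nhim3}) are used to show that $\omega$ kills pairings of a tangent-to-$\Lambda$ vector with a stable (or unstable) vector and pairings of two stable (or two unstable) vectors. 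Hence $\Omega_{+}$, the projection of $W^{s}(\Lambda)$ onto $\Lambda$ along its stable fibres, satisfies $\Omega_{+}^{\ast}(\omega|_{\Lambda})=\omega|_{W^{s}(\Lambda)}$; likewise $\Omega_{-}$ on $W^{u}(\Lambda)$. By Definition~\ref{def:homoclinic-channel} a homoclinic channel $\Gamma$ is a common transverse section of the two fibrations, so $(\Omega_{\pm}^{\Gamma})^{\ast}(\omega|_{\Lambda})=\omega|_{\Gamma}$, and therefore $\sigma^{\Gamma}=\Omega_{+}^{\Gamma}\circ(\Omega_{-}^{\Gamma})^{-1}$ preserves $\omega|_{\Lambda}$. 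Running the same argument for the flow $\Phi_{\varepsilon,t}$ shows $\sigma_{\varepsilon}^{\Gamma_{\varepsilon}}$ preserves $\omega|_{\Lambda_{\varepsilon}}$, and then Theorem~\ref{th:k-epsilon}, which gives $k_{\varepsilon}^{\ast}\omega|_{\Lambda_{\varepsilon}}=\omega_{\mathcal{N}}$, transports this to $s_{\varepsilon}^{\ast}\omega_{\mathcal{N}}=\omega_{\mathcal{N}}$.

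\emph{Reducing the expansion to a primitive.} Since $\{s_{\varepsilon}\}$ is a smooth family of $\omega_{\mathcal{N}}$-symplectic maps with $s_{0}$ fixed, the generator $v:=(\partial_{\varepsilon}s_{\varepsilon}|_{\varepsilon=0})\circ s_{0}^{-1}$ is a symplectic vector field on $\mathcal{N}$, hence locally Hamiltonian, $\iota_{v}\omega_{\mathcal{N}}=dS_{0}$; here $S_{0}$ will be produced explicitly, so a cohomological obstruction is irrelevant. Because $\iota_{J\nabla S_{0}}\omega=dS_{0}$ for the standard form, writing $s_{\varepsilon}=s_{0}+\varepsilon\,\partial_{\varepsilon}s_{\varepsilon}|_{0}+O(\varepsilon^{2})$ yields precisely $s_{\varepsilon}=s_{0}+\varepsilon\,J\nabla S_{0}\circ s_{0}+O(\varepsilon^{2})$. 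So it remains to compute $v$ and recognise $S_{0}$ as the displayed integral.

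\emph{Computing the generator.} I would differentiate at $\varepsilon=0$ the relations defining the wave maps. It is convenient first to straighten the normally hyperbolic manifold by a symplectic change of coordinates so that $\Lambda_{\varepsilon}\equiv\Lambda$ --- legitimate by Theorem~\ref{th:nhim-pert} and the symplectic normalisation of Theorem~\ref{th:k-epsilon} --- which removes the motion of $\Lambda$ and of $\Gamma_{\varepsilon}$ from the bookkeeping and leaves only the motion of the homoclinic point and of the stable/unstable fibres. Fix $\mathbf{x}\in\Gamma$, let $h_{\varepsilon}\in\Gamma_{\varepsilon}$ be the homoclinic point whose forward asymptote is the orbit through $k_{0}(\mathbf{x})$, and put $\mathbf{x}_{\varepsilon}^{\pm}=\Omega_{\pm}^{\Gamma_{\varepsilon}}(h_{\varepsilon})$. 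Normal hyperbolicity, with the rates $\lambda,\mu$ of Definition~\ref{def:nhim} surviving uniformly in $\varepsilon$ by Theorem~\ref{th:nhim-pert}, gives $\|\Phi_{\varepsilon,u}(h_{\varepsilon})-\Phi_{\varepsilon,u}(\mathbf{x}_{\varepsilon}^{+})\|\le C\lambda^{u}$ for $u\ge 0$ and the mirror bound for $u\le 0$ with $\mathbf{x}_{\varepsilon}^{-}$. Duhamel's formula for $\dot{\mathbf{x}}=J\nabla H_{\varepsilon}(\mathbf{x})$ with the $O(\varepsilon)$ forcing $\varepsilon\,J\nabla\dot H$, $\dot H:=\frac{dH_{\varepsilon}}{d\varepsilon}|_{\varepsilon=0}$, expresses the first-order variation of an orbit as an integral of the variational flow against $J\nabla\dot H$; projecting with the dichotomy projections attached to the splitting $E^{u}\oplus E^{s}\oplus T\Lambda$ gives integral formulas for $\partial_{\varepsilon}\Omega_{\pm}^{\Gamma_{\varepsilon}}|_{0}$. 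The telescoping device --- subtracting the orbit in $\Lambda$ from the homoclinic orbit before integrating --- turns each half-line into an absolutely convergent improper integral, convergence being forced by $\lambda<1<\mu^{-1}$. Finally, pairing $v$ with $\omega_{\mathcal{N}}$ and using the symplectic-reduction picture from the first part to rewrite the transverse velocity of $h_{\varepsilon}$ as $d$ of the accumulated perturbation $\int(\dot H\circ\Phi_{u}-\dot H\circ\Phi_{u}|_{\Lambda})\,du$, one reads off the $S_{0}$ of the statement: the $u<0$ group of terms comes from the leg of the excursion described through $(\Omega_{-}^{\Gamma})^{-1}\circ(\sigma^{\Gamma})^{-1}$, and the $u>0$ group from the leg described through $(\Omega_{+}^{\Gamma})^{-1}$.

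\emph{Main obstacle.} The crux is keeping track of all the $\varepsilon$-dependent objects at once --- $\Lambda_{\varepsilon}$ and its fibres, $\Gamma_{\varepsilon}$, and both wave maps --- and justifying that $\partial_{\varepsilon}$ commutes with the $T\to\infty$ limits defining the wave maps and the Melnikov integral. Both rest on uniform-in-$\varepsilon$ exponential-dichotomy estimates, i.e., on the persistence of the rates granted by Theorem~\ref{th:nhim-pert}; with those in hand, differentiation under the integral sign and convergence of the telescoped integrals are routine, and what is left is the linear-symplectic identity $\iota_{v}\omega_{\mathcal{N}}=dS_{0}$ that pins the primitive down.
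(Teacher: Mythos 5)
The paper does not prove this statement; it imports it as \cite[Theorem 32]{DLS1}, so there is no ``paper's own proof'' to compare against beyond that citation. Your reconstruction nevertheless follows the same route as the cited source: symplecticity of $\sigma^{\Gamma}$ via the coisotropic/reduction picture (the stable and unstable manifolds of $\Lambda$ are coisotropic, the strong stable/unstable foliations are their characteristic foliations, the wave maps are the reductions, and $\Gamma$ is a transverse section to both), and the Melnikov potential via the observation that the $\varepsilon$-derivative of a family of symplectic maps is a locally Hamiltonian vector field, whose primitive is then computed by differentiating the wave-map relations, invoking the uniform exponential dichotomy to telescope and make the improper integrals converge, and reading off $S_{0}$ from $\iota_{v}\omega_{\mathcal{N}}=dS_{0}$. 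This is essentially the DLS1 argument, so the approach is correct; the only caveat is that the computation of the generator is given as a plan rather than carried out, which is consistent with the paper's decision to cite rather than reprove.
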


\subsection{KAM theorem}

The celebrated KAM theorem is used to prove persistence of invariant tori.
We focus on the setting of a symplectic map on an annulus, since such will
be the setting in the restricted three body problem.

Let $\mathbb{T}^{1}=\mathbb{R}/2\pi\mathbb{Z}$ be a circle.

\begin{theorem}[KAM Theorem]
\cite[Theorem 4.8]{DLS3}\label{th:KAM} Let $g:\left[ 0,1\right] \times
\mathbb{T}^{1}\rightarrow \left[ 0,1\right] \times \mathbb{T}^{1}$ be an
exact symplectic $C^{l}$ map with $l\geq 6.$ Assume that $%
g=g_{0}+\varepsilon g_{1}$, where $\varepsilon \in \mathbb{R},$
\begin{equation}
g_{0}\left( I,\varphi \right) =\left( I,\varphi +A\left( I\right) \right) ,
\label{eq:KAM-form}
\end{equation}%
$A$ is $C^{l},$ $\left\vert \frac{dA}{dI}\right\vert \geq M$, and $%
\left\Vert g_{1}\right\Vert _{C^{l}}\leq 1.$ Then, for each $\varepsilon$ 
 sufficiently small, for a set of Diophantine frequencies $%
\sigma $ of exponent $\theta =5/4$ \footnote{$\sigma $ is a Diophantine
number of exponent $\theta $ if there exists $C>0$ such that $|\sigma
-p/q|>C/q^{\theta +1}$ for all $p,q\in \mathbb{Z}$ with $q\neq 0$}, there exist invariant tori which are graphs of $C^{l-3}$ functions $u_{\sigma
}=u_{\sigma }(\varphi )$, the motion on them is $C^{l-3}$-conjugate to the
rotation by $\sigma $, and the tori cover the whole annulus except for a set of
measure smaller than $O\left( M^{-1}\varepsilon ^{1/2}\right) $.%
%\marginpar{
%MJC. statement simplified.}\marginpar{MG: slightly rephrased}

\end{theorem}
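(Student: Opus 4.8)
The plan is to run a KAM (Nash--Moser) iteration using the rotation number as the free parameter, and then to convert the exclusion of non--Diophantine frequencies into the stated estimate on the measure of the part of the annulus not covered by invariant tori.

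\emph{Reduction.} Since $|dA/dI|\ge M$, the map $A$ is a $C^{l}$ diffeomorphism of $[0,1]$ onto an interval $\mathcal{J}$ of length at least $M$. I would fix $\sigma\in\mathcal{J}$, set $I_{\sigma}=A^{-1}(\sigma)$, and translate the action so that $I_{\sigma}=0$; then $\{I=0\}$ is an invariant curve of $g_{0}$ carrying the rotation $\varphi\mapsto\varphi+\sigma$. I would then look for the perturbed torus in the form of an embedding $K_{\sigma}:\mathbb{T}^{1}\rightarrow[0,1]\times\mathbb{T}^{1}$, close to $\theta\mapsto(0,\theta)$, solving the invariance equation $g\circ K_{\sigma}=K_{\sigma}\circ R_{\sigma}$ with $R_{\sigma}(\theta)=\theta+\sigma$; its first component gives the graph $u_{\sigma}$ and the second component the conjugacy of the restricted dynamics to $R_{\sigma}$.

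\emph{The iterative step.} Given an approximate solution with error $e=g\circ K-K\circ R_{\sigma}$, I would linearize the invariance equation around $K$; using the exact symplectic structure to put the linearized operator in triangular form reduces the correction to solving difference equations $\psi(\theta+\sigma)-\psi(\theta)=h(\theta)-\langle h\rangle$. Exactness of $g$ is precisely what makes the averages $\langle h\rangle$ obstructing solvability in the action direction vanish, so that the step closes. In Fourier coefficients $\widehat{\psi}_{k}=\widehat{h}_{k}/(e^{2\pi ik\sigma}-1)$ for $k\ne0$, and the Diophantine condition $|\sigma-p/q|>\gamma/q^{9/4}$ (exponent $\theta=5/4$) bounds the small divisors by $|e^{2\pi ik\sigma}-1|^{-1}\le C|k|^{5/4}/\gamma$, at the cost of a fixed loss of derivatives. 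Since we work in the finite--regularity class $C^{l}$, I would compensate this loss by composing with smoothing operators $S_{N_{n}}$, $N_{n}$ growing doubly exponentially, in a Nash--Moser scheme, so that the errors obey a recursion of the form $\Vert e_{n+1}\Vert\le C N_{n}^{a}\Vert e_{n}\Vert^{2}+C N_{n}^{-(l-b)}\Vert e_{n}\Vert$ in suitable norms, which converges once $\Vert e_{0}\Vert=O(\varepsilon)$ is small enough relative to $\gamma$. Tracking the indices (one derivative lost from the invariance equation to the cohomological equation, the Diophantine exponent, and the interpolation inequalities behind the smoothing estimates) gives a limit embedding $K_{\sigma}$ of class $C^{l-3}$, hence a $C^{l-3}$ graph $u_{\sigma}$ and a $C^{l-3}$ conjugacy to $R_{\sigma}$.

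\emph{Smallness threshold and measure estimate.} The iteration converges provided $\varepsilon\le c\,\gamma^{2}$, with $c>0$ depending only on $l$ and on the $C^{l}$ norms of $A$ and of $g_{1}\le1$. Choosing $\gamma$ to be a fixed small multiple of $\varepsilon^{1/2}$ makes this valid for all sufficiently small $\varepsilon>0$, \emph{uniformly} over all $\sigma\in\mathcal{J}$ that are $(\gamma,5/4)$--Diophantine. The set of $\sigma$ in $\mathcal{J}$ failing this condition has Lebesgue measure $O(\gamma)=O(\varepsilon^{1/2})$, since $\sum_{q\ge1}\sum_{p}2\gamma/q^{9/4}=O(\gamma)$; pulling it back through $A^{-1}$ and using $|dA^{-1}/d\sigma|\le M^{-1}$ shows that the excluded actions fill a set of measure $O(M^{-1}\varepsilon^{1/2})$, and the tori $\{I=u_{\sigma}(\varphi)\}$ over the admissible $\sigma$ cover the annulus up to this set. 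I expect the main obstacle to be the finite--regularity Nash--Moser iteration itself: choosing the smoothing parameters, proving the quadratic-plus-smoothing recursion for the error norms in the correct scale of spaces, and checking that the threshold $c\gamma^{2}$ is independent of $\sigma$ so that the whole Cantor family of tori is obtained simultaneously; the exactness argument killing the obstructing averages, the small--divisor bound from the $5/4$ Diophantine exponent, and the pull--back measure estimate are comparatively routine.
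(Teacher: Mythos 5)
The paper does not prove this theorem; it is imported verbatim as \cite[Theorem~4.8]{DLS3}, so there is no internal proof to compare against. Your sketch is the standard KAM/Nash--Moser argument in essentially the form one would find in the cited source and its antecedents (R\"ussmann-style finite-regularity KAM for exact symplectic twist maps via the parametrization method): reduce by the twist to a family of invariance equations $g\circ K_\sigma = K_\sigma\circ R_\sigma$, triangularize the linearized operator using exactness to kill the obstructing averages, invert the cohomological equations with a small-divisor loss governed by the Diophantine exponent $\theta=5/4$, compensate the loss with smoothing operators in a quadratic-plus-tame recursion, and convert the excluded $(\gamma,\theta)$-non-Diophantine rotation numbers into the measure estimate via the pullback bound $|d A^{-1}/d\sigma|\le M^{-1}$, with $\gamma\sim\varepsilon^{1/2}$ coming from the $\varepsilon\lesssim\gamma^{2}$ smallness threshold. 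This is consistent with the stated conclusions ($C^{l-3}$ regularity for $l\ge 6$ and measure $O(M^{-1}\varepsilon^{1/2})$).

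One small imprecision: in the measure computation $\sum_{q\ge1}\sum_{p} 2\gamma/q^{\theta+1}$, the inner sum over $p$ must be restricted to the $O(q)$ values with $p/q$ near the compact interval $\mathcal{J}=A([0,1])$; only then does the double sum become $O\bigl(\gamma\sum_q q^{-\theta}\bigr)=O(\gamma)$, using $\theta=5/4>1$. As written, with $p$ ranging over all of $\mathbb{Z}$, the sum diverges. You should also note that since the annulus has boundary, the scheme must be run on a slightly shrunken action interval (or on the nested interiors $\mathbf{I}'\supset\mathbf{I}''$ as the paper does elsewhere) to avoid domain loss at each step; this does not affect the final measure estimate but is needed to make the iteration well defined.
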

%TCIDATA{Version=5.00.0.2606}
%TCIDATA{LaTeXparent=0,0,MMFedit.tex}

\section{Diffusion mechanism for time periodic perturbations of Hamiltonian
systems\label{sec:diffusion-mechanism}}

We now consider a particular formulation of Theorem \ref%
{th:Melnikov-potential}. Let%
\begin{equation}
H_{\varepsilon}(\mathbf{x},t)=H(\mathbf{x})+\varepsilon G(\mathbf{x}%
,t)+O(\varepsilon^{2})  \label{eq:pert-Ham}
\end{equation}
with $G$ being $2\pi$ periodic in $t$. We assume that $H_{\varepsilon}$
depend smoothly on $\varepsilon$.

Let us assume that for $\varepsilon=0$ we have a normally hyperbolic
invariant manifold $\Lambda$ for $\Phi_{2\pi}$, with a reference manifold $%
\mathcal{N}$, parametrization $k_{0}$ and inner dynamics $r_{0}$.

Assume that there exists a finite collection of homoclinic channels $%
\Gamma^j $, $j=1,\ldots,k$, for $\Phi_{2\pi}$, and corresponding scattering
maps $\sigma^{\Gamma^j}:\Omega^{\Gamma^j}_-(\Gamma^j)\to
\Omega^{\Gamma^j}_+(\Gamma^j)$, $j=1,\ldots,k$. Each scattering map can be
expressed as a map on the reference manifold $\mathcal{N}$ by $%
s^j_{0}=k_{0}^{-1}\circ\sigma^{\Gamma^j}\circ k_{0}$, $j=1,\ldots,k$.

In what follows, we will switch from studying the flow dynamics to the
dynamics induced by a time-$2\pi$ map of the flow. We note that the
scattering maps for the flow $\Phi_t$ from the above collection remain
scattering maps for the time-$2\pi$ map of the flow (see \cite{DLS1}).

Consider now $\varepsilon >0$. Let $\Sigma _{t=\tau }=\left\{ \left( \mathbf{%
x},t\right) |t=\tau \right\} $ and $\Phi _{\varepsilon ,\tau ,2\pi }:\Sigma
_{t=\tau }\rightarrow \Sigma _{t=\tau }$ be the map induced by the time $%
2\pi $ shift along the flow of $H_{\varepsilon }$.

We assume that the manifold $\Lambda $ for $\varepsilon =0$ is perturbed to $%
\Lambda _{\varepsilon ,\tau }$, which is a normally hyperbolic invariant
manifold for $\Phi _{\varepsilon ,\tau ,2\pi }$, provided $\varepsilon $ is
sufficiently small. This is parametrized by $k_{\varepsilon ,\tau }:\mathcal{%
N}\rightarrow \Lambda _{\varepsilon ,\tau }$. We denote by $r_{\varepsilon
,\tau ,2\pi }$ the map induced by $\Phi _{\varepsilon ,\tau ,2\pi }$ on the
reference manifold $\mathcal{N}$, i.e., 
\begin{equation*}
r_{\varepsilon ,\tau ,2\pi }=k_{\varepsilon ,\tau }^{-1}\circ \Phi
_{\varepsilon ,\tau ,2\pi }\circ k_{\varepsilon ,\tau }.
\end{equation*}%
By the previous section, $\Phi _{\varepsilon ,\tau ,2\pi }$ is a symplectic
map on $\Lambda _{\varepsilon ,\tau }$, and $r_{\varepsilon ,\tau ,2\pi }$
is a symplectic map on $\mathcal{N}$.

Also, the homoclinic channels $\Gamma^j$ are perturbed to $%
\Gamma^j_{\varepsilon,\tau}$, $j=1,\ldots,k$, respectively, leading to
scattering maps $\sigma^{\Gamma^j}_{\varepsilon,\tau}$, and to the
corresponding maps defined on the reference manifold $\mathcal{N}$ 
\begin{equation*}
s^j_{\varepsilon,\tau}=k_{\varepsilon,\tau}^{-1}\circ\sigma^{\Gamma^j
_{\varepsilon,\tau}}\circ k_{\varepsilon,\tau}.
\end{equation*}
From the previous section we have that each map $s^j_{\varepsilon,\tau}$, $%
j=1,\ldots,k$, is symplectic.

Again, the advantage of expressing a scattering map in terms of the
reference manifold is that the unperturbed scattering map as well as its
sufficiently small perturbations, are defined on some domains of the same
manifold $\mathcal{N}$.

For a generic map $s^{\Gamma}_{\varepsilon,\tau}$ from this family we have
the following result:

\begin{theorem}
\label{th:Melnikov-potential-nonaut} For $\varepsilon$ sufficiently small,
so that the scattering map is well defined, 
\begin{equation}
s_{\varepsilon,\tau}=s_{0}+\varepsilon J\nabla S_{0,\tau}\circ s_{0}+O\left(
\varepsilon^{2}\right) ,  \label{eq:s-eps-nonaut}
\end{equation}
with 
\begin{eqnarray}
S_{0,\tau}\left( \mathbf{x}\right) & =\lim_{T\rightarrow+\infty}\int
_{-T}^{0}G\left( \Phi_{u}\circ\left( \Omega_{-}^{\Gamma}\right)
^{-1}\circ(\sigma^\Gamma)^{-1} \circ k_{0}(\mathbf{x),}\tau+u\right)
\label{eq:S0tau-def} \\
& \qquad\qquad\qquad-G\left( \Phi_{u}\circ\left( \sigma^{\Gamma}\right)
^{-1}\circ k_{0}(\mathbf{x),}\tau+u\right) du  \nonumber \\
& +\lim_{T\rightarrow+\infty}\int_{0}^{T}G\left( \Phi_{u}\circ\left(
\Omega_{+}^{\Gamma}\right) ^{-1}\circ k_{0}(\mathbf{x)},\tau+u\right)  \nonumber
\\
& \qquad\qquad\qquad-G\left( \Phi_{u}\circ k_{0}(\mathbf{x),}\tau+u\right)
du.  \nonumber
\end{eqnarray}
\end{theorem}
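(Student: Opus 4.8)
The plan is to derive Theorem~\ref{th:Melnikov-potential-nonaut} from the autonomous result, Theorem~\ref{th:Melnikov-potential}, by the standard trick of suspending the time-periodic system to an autonomous one on an extended phase space. Concretely, I would introduce the extended Hamiltonian
\[
\widehat H_\varepsilon(\mathbf x, t, E) = E + H(\mathbf x) + \varepsilon G(\mathbf x, t) + O(\varepsilon^2)
\]
on $\mathbb{R}^{2n}\times\mathbb{T}^1\times\mathbb{R}$ with the symplectic form $\omega \oplus dt\wedge dE$, whose flow is exactly the flow of $H_\varepsilon(\mathbf x, t)$ in the $\mathbf x$ variable, with $t$ advancing at unit speed and $E$ adjusting to keep $\widehat H_\varepsilon$ constant. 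For $\varepsilon=0$ the set $\widehat\Lambda = \Lambda \times \mathbb{T}^1 \times \{0\}$ (or rather the appropriate energy level over $\Lambda$) is a normally hyperbolic invariant manifold for the time-$2\pi$ map of $\widehat\Phi_t$, inheriting normal hyperbolicity from $\Lambda$ in the $\mathbf x$-directions while $(t,E)$ are neutral and bounded, which is exactly the setting of Definition~\ref{def:nhim} with the central directions enlarged. The homoclinic channels $\Gamma^j$ lift to homoclinic channels $\widehat\Gamma^j$ for $\widehat\Lambda$, and the scattering map of the extended system, restricted to the section $\{t=\tau\}$, is precisely $\sigma^{\Gamma^j}_{\varepsilon,\tau}$.

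The key steps, in order, are: (i) check that $\widehat\Lambda$ is normally hyperbolic for the extended time-$2\pi$ map and that the rates and smoothness exponents are inherited from $\Lambda$, so Theorem~\ref{th:nhim-pert} and Theorem~\ref{th:k-epsilon} apply to the suspension; (ii) identify the wave maps $\widehat\Omega_\pm$ and the scattering map of the suspension with the $\tau$-dependent family $\{\sigma^{\Gamma^j}_{\varepsilon,\tau}\}$ on the sections, using the invariance property \eqref{eq:invariance-property} to organize the $\tau$-dependence; (iii) apply Theorem~\ref{th:Melnikov-potential} to $\widehat H_\varepsilon$ to get the first-order expansion $\widehat s_\varepsilon = \widehat s_0 + \varepsilon J\nabla \widehat S_0 \circ \widehat s_0 + O(\varepsilon^2)$, where now $\frac{d\widehat H_\varepsilon}{d\varepsilon}\big|_{\varepsilon=0} = G(\mathbf x,t)$; and (iv) restrict the resulting Melnikov potential $\widehat S_0$ to the section $\{t=\tau\}$ and check that the four integrals in Theorem~\ref{th:Melnikov-potential} reduce, under $\Phi_u$ shifting $t\mapsto \tau+u$, exactly to the expression \eqref{eq:S0tau-def}. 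Step (iv) is essentially bookkeeping: the abstract integrand $\frac{dH_\varepsilon}{d\varepsilon}|_{\varepsilon=0}\circ \widehat\Phi_u \circ(\cdots)$ becomes $G$ evaluated along the flow with the time slot advanced by $u$ starting from $\tau$, which is what produces the $\tau+u$ arguments.

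The main obstacle I anticipate is step (i)–(ii): making sure the suspension genuinely fits the hypotheses of the earlier theorems, since the extended NHIM $\widehat\Lambda$ is noncompact in the $E$-direction and the flow is not the time-$t$ map of a Hamiltonian on a compact manifold in the naive sense. The clean fix is to restrict attention to a bounded range of energies (equivalently, a compact piece of $\widehat\Lambda$ that is overflowing/inflowing invariant, or to note that the $E$-coordinate is slaved to $(\mathbf x,t,\varepsilon)$ by the level set $\widehat H_\varepsilon = \text{const}$, so the dynamically relevant object is $\Lambda\times\mathbb{T}^1$, which is compact). One must also verify that the symplectic normal form of Theorem~\ref{th:k-epsilon} descends correctly — that $\omega_{\mathcal N}$ for the suspension splits as $\omega_{\mathcal N}\oplus dt\wedge dE$ so that the section map $r_{\varepsilon,\tau,2\pi}$ and the scattering maps $s^j_{\varepsilon,\tau}$ are symplectic on $\mathcal N$ with respect to the original $\omega_{\mathcal N}$, a point already recorded in the discussion preceding the statement. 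Once the suspension is set up correctly, everything else is a transcription of Theorem~\ref{th:Melnikov-potential}; alternatively, one can cite \cite{DLS1}, where this nonautonomous version is carried out, and simply indicate the suspension argument.
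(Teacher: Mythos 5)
Your proposal follows exactly the paper's approach: the paper's proof consists of the single sentence ``The proof follows by applying Theorem~\ref{th:Melnikov-potential} in the extended phase space,'' which is precisely the suspension argument you outline. Your steps (i)--(iv), including the caveats about noncompactness in the $E$-direction and the splitting of $\omega_{\mathcal{N}}$, are a correct and careful unpacking of what that one-line proof leaves implicit.
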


\begin{proof}
The proof follows by applying Theorem \ref{th:Melnikov-potential} in the
extended phase space. 
%To keep the paper self contained we include the proof
%in  \ref{sec:Melnikov-potential-nonaut}.
\end{proof}

Assume that $\mathcal{N}=\left[ 0,1\right] \times\mathbb{T}^{1}$. We can
describe $\mathcal{N}$ by a system of action-angle coordinates $(I,\theta)$
with $I\in[0,1]$ and $\theta\in\mathbb{T}^1$, with $dI\wedge d\theta=\omega_{%
\mathcal{N}}$.

We assume that the unperturbed scattering maps $s_{0}^{1},\ldots ,s_{0}^{k}$
from above satisfy 
\begin{eqnarray}
s_{0}^{j}& :\mathcal{N}\supset \mathrm{dom}(s_{0}^{j})\rightarrow \mathrm{ran%
}(s_{0}^{j})\subset \mathcal{N},  \nonumber \\
s_{0}^{j}(I,\theta )& =\left( I,\omega ^{j}(I,\theta )\right) ,
\label{eq:s-rotation-form}
\end{eqnarray}%
for $j=1,\ldots ,k$. The domain $\mathrm{dom}(s_{0}^{j})$ and the range $%
\mathrm{ran}(s_{0}^{j})$ of each unperturbed scattering map are assumed to
be open sets in $\mathcal{N}$. Note that each unperturbed scattering map $%
s_{0}^{j}$, as well as each perturbation $s_{\varepsilon ,\tau }^{j}$, is an
area preserving map on $\mathcal{N}$.

The assumption (\ref{eq:s-rotation-form}) is natural for several types of
systems. In the model for the large gap problem considered in \cite{DLS2},
it is shown that there exists a scattering map of the form $%
s_0(I,\theta)=(I,\theta)$, that is, $s_0=\text{Id}$. In the periodically
perturbed geodesic flow model considered in \cite{DLS3}, it is shown that
there exists a scattering map of the form $s_0(I,\theta)=(I,\theta+\psi)$,
for some constant $\psi$. In the PER3BP model considered in this paper, we
find that there exist scattering maps of the form $s_{0} (I,\theta) =\left(
I,\theta+\omega (I)\right)$; see Section \ref{sec:3bp}. Such form of the
scattering map has been established and numerically investigated in \cite%
{Canalias}.

We also assume that the unperturbed inner map $r_{0}$, as well as each
perturbation $r_{\varepsilon ,\tau }$, is an area preserving map on $%
\mathcal{N}$.

We make the following assumption:%
\begin{equation}
\text{int}(\mathcal{N})=(0,1)\times \mathbb{T}^{1}\subseteq
\bigcup_{j=1,\ldots ,k}\mathrm{dom}(s_{0}^{j}).
\label{eq:scattering-domains}
\end{equation}
This assumption means that for every point $(I,\theta )\in \text{int}(%
\mathcal{N})$ there exists a scattering map $s_{0}^{j}$, for some $j$,
defined on a neighborhood of that point. It may seem as a very strong
assumption. However, the invariance property of the scattering map,
mentioned in Subsection \ref{subsection:scattering}, implies that if $%
s_{0}^{j}:\text{dom}(s_{0}^{j})\rightarrow \text{ran}(s_{0}^{j})$ is a
scattering map corresponding to a homoclinic channel $\Gamma ^{j}$, then the
scattering map corresponding to the homoclinic channel $f_{0,\tau
}^{k}(\Gamma ^{j})$ is defined on $r_{0,\tau }^{k}(\text{dom}%
(s_{0}^{j}))\subseteq \mathcal{N}$, for all $k\in \mathbb{Z}$. Hence, simply
by iterating the homoclinic channel we obtain corresponding scattering maps
whose domain in $\mathcal{N}$ is iterated by the inner dynamics. Thus we can
cover large portions of $\mathcal{N}$ with domains of scattering maps just
using the invariance property from above. This idea will be illustrated in
Section \ref{sec:3bp}.

Denote%
\begin{eqnarray*}
\mathcal{N}^{<}\left( a\right) & :=\left\{ (I,\theta )\in \mathcal{N}%
|I<a\right\} , \\
\mathcal{N}^{>}\left( a\right) & :=\left\{ (I,\theta )\in \mathcal{N}%
|I>a\right\} ,
\end{eqnarray*}%
and for $\rho >0$ denote 
\begin{eqnarray*}
B_{\rho }^{<}(a) &=&\{\mathbf{x}:d\left( \mathbf{x},k_{0}\left( \mathcal{N}%
^{<}\left( a\right) \right) \right) <\delta \}, \\
B_{\rho }^{>}(a) &=&\{\mathbf{x}:d\left( \mathbf{x},k_{0}\left( \mathcal{N}%
^{>}\left( a\right) \right) \right) <\delta \}.
\end{eqnarray*}

Let $\tau\in\lbrack0,2\pi)$ be a fixed number. Let $S_{0,\tau}^{j}$ stand
for the functions of the form (\ref{eq:S0tau-def}), associated to the
perturbed scattering maps $s_{\varepsilon,\tau}^{j}$, respectively, for $%
j=1,\ldots,k$.

We now state the main theoretical result that we will use for our proof of
diffusion.

\begin{theorem}
\label{th:mechanism-main} Consider that all above mentioned assumptions from
this section are fulfilled. In particular, that for $\varepsilon=0$ we have
the sequence of scattering maps $s_{0}^{1},\ldots,s_{0}^{k}$ of the form (%
\ref{eq:s-rotation-form}), satisfying (\ref{eq:scattering-domains}).
%Moreover, we assume also that $r_{0,\tau}$, as well as each perturbation $%
%r_{\varepsilon ,\tau }$ (for sufficiently small $\varepsilon$), is an area
%preserving map on $\mathcal{N}$.

Let $\rho >0$ be a fixed number. If for every $\left( I,\theta \right) \in 
\mathcal{N}$ there exists $j$ such that 
\begin{equation}
\frac{\partial S_{0,\tau }^{j}}{\partial \theta }( s_0^j(I,\theta))<0,
\label{eq:scatter-grad-a1}
\end{equation}%
then for every $a_{1}<a_{2}$ in $(0,1)$, and for all sufficiently small $%
\varepsilon $, there exists an orbit from $B_{\rho }^{<}(a_1)$ to $B_{\rho
}^{>}(a_2)$.

Similarly, if for every $\left( I,\theta \right) \in \mathcal{N}$ there
exists $j$ such that 
\begin{equation}
\frac{\partial S_{0,\tau }^{j}}{\partial \theta }( s_0^j(I,\theta ))>0,
\label{eq:scatter-grad-a2}
\end{equation}%
then for every $a_{1}<a_{2}$ in $(0,1)$, and for all sufficiently small $%
\varepsilon $ there exists an orbit from $B_{\rho }^{>}(a_2)$ to $B_{\rho
}^{<}(a_1)$.
\end{theorem}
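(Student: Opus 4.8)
The plan is to build a pseudo-orbit for the combined inner/outer dynamics that moves the action coordinate $I$ from below $a_1$ to above $a_2$, and then invoke the shadowing result of \cite{GideaLlaveSeara14} to produce a genuine orbit of $\Phi_{\varepsilon,\tau,2\pi}$ that stays within $\rho$ of this pseudo-orbit and hence goes from $B_\rho^<(a_1)$ to $B_\rho^>(a_2)$. The engine of the construction is the first-order expansion \eqref{eq:s-eps-nonaut}: in the $(I,\theta)$ coordinates, writing $J\nabla S_{0,\tau}^j = (-\partial_\theta S_{0,\tau}^j,\ \partial_I S_{0,\tau}^j)$, the $I$-component of $s_{\varepsilon,\tau}^j$ applied at a point with $s_0^j$-image $(I,\theta)$ is $I - \varepsilon\,\partial_\theta S_{0,\tau}^j(s_0^j(I,\theta)) + O(\varepsilon^2)$, since $s_0^j$ itself preserves $I$ by \eqref{eq:s-rotation-form}. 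So the hypothesis \eqref{eq:scatter-grad-a1} says exactly that at every point of $\mathcal{N}$ there is a scattering map in our finite collection whose application strictly increases $I$, by an amount $\geq c\varepsilon$ for some $c>0$ (using compactness of $\mathcal{N}$ and continuity to get a uniform lower bound on $-\partial_\theta S_{0,\tau}^j$ over the relevant closed region, away from the boundary). Because the collection is finite and each $\mathrm{dom}(s_0^j)$ is open, by \eqref{eq:scattering-domains} and a Lebesgue-number argument we may cover the closed strip $\{a_1-\delta \le I \le a_2+\delta\}$ by finitely many of these domains and select, near each point, a scattering map that is defined on a full neighborhood and produces a definite positive increment in $I$.

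Next I would assemble the pseudo-orbit. Start at a point of $k_0(\mathcal{N}^<(a_1))$ with $I$ slightly below $a_1$; repeatedly apply the locally-chosen perturbed scattering map $s_{\varepsilon,\tau}^{j}$, each step increasing $I$ by at least $c\varepsilon/2$ (absorbing the $O(\varepsilon^2)$ error for $\varepsilon$ small). After at most $O(1/\varepsilon)$ steps the action exceeds $a_2$. Two technical points must be handled along the way. First, the scattering maps are only partially defined, so between two applications one may need to interpose a bounded number of iterates of the inner map $r_{\varepsilon,\tau,2\pi}$ to reposition $\theta$ into the domain of the next scattering map; this is harmless because $r_{\varepsilon,\tau,2\pi}$ preserves $I$ up to $O(\varepsilon)$ (indeed it is $O(\varepsilon)$-close to the area-preserving $r_0$), so a uniformly bounded number of such interpositions costs at most $O(\varepsilon)$ in $I$ per step, which is dominated by the gain $c\varepsilon/2$ provided we only allow, say, a single inner step between scattering steps. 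Second, one must stay away from $\partial\mathcal{N}$; since we only need to cross from $a_1$ to $a_2$ with $0<a_1<a_2<1$, and each step changes $I$ by $O(\varepsilon)$, the whole pseudo-orbit remains in the interior compact substrip $\{a_1/2 \le I \le (1+a_2)/2\}$ for $\varepsilon$ small, where all the uniform bounds above are valid. This produces an $\eta$-pseudo-orbit of the combined dynamics with $\eta = \eta(\varepsilon) \to 0$.

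Finally I would feed this into the shadowing theorem of \cite{GideaLlaveSeara14}: under the standing hypotheses (normal hyperbolicity of $\Lambda_{\varepsilon,\tau}$, area preservation of the inner map so that Poincar\'e recurrence holds on $\mathcal{N}$, transversality encoded in the homoclinic channels, and $O(\varepsilon)$-closeness of the scattering maps to identity), any such pseudo-orbit built by alternating inner iterates and scattering-map applications is $\rho$-shadowed by a true orbit of $\Phi_{\varepsilon,\tau,2\pi}$ — and if the persistent $\Lambda_{\varepsilon,\tau}$ were only locally invariant rather than invariant, orbits leaving it move by order $1$ and we are done a fortiori, as noted in Section~\ref{sec:prel}. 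Translating back through $k_{\varepsilon,\tau}$, which is $C^0$-close to $k_0$, the shadowing orbit starts in $B_\rho^<(a_1)$ and ends in $B_\rho^>(a_2)$, giving the first assertion. The second assertion (decreasing $I$) is identical after replacing each scattering map in the collection by the one guaranteed by \eqref{eq:scatter-grad-a2}, which decreases $I$ by a definite amount; equivalently, run the same argument with $I \mapsto 1-I$. The main obstacle is the bookkeeping of the partial domains: one must verify that the repositioning-by-inner-iterates never forces more than a controlled number of steps (so that the net drift stays positive) and never pushes $\theta$ or $I$ out of the region where the uniform estimates hold — but this is precisely the situation handled by the general mechanism of \cite{GideaLlaveSeara14}, so the real work is checking that \eqref{eq:scatter-grad-a1} implies its abstract hypothesis, which is the computation sketched in the first paragraph.
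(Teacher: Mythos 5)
Your proposal follows essentially the same route as the paper: compute the first-order increment of the action coordinate from the expansion \eqref{eq:s-eps-nonaut} using the rotation form \eqref{eq:s-rotation-form}, obtain a uniform lower bound $c>0$ on the increment by compactness of $\mathcal{N}$, build a length-$O(1/\varepsilon)$ sequence of points by successive applications of the scattering maps, transfer the sequence to $\Lambda_{\varepsilon,\tau}$ via $k_{\varepsilon,\tau}$, invoke the shadowing Theorem~\ref{thm:GLS}, and finish with a triangle inequality using the $C^0$-closeness of $k_{\varepsilon,\tau}$ to $k_0$. The one unnecessary wrinkle is your discussion of interposing iterates of the inner map to reposition $\theta$: hypothesis \eqref{eq:scatter-grad-a1} already supplies, at \emph{every} $(I,\theta)\in\mathcal{N}$, an index $j$ for which $s_0^j$ is defined there and has the correct sign of $\partial_\theta S_{0,\tau}^j$ (indeed $s_0^j(I,\theta)$ must make sense for the hypothesis to be stated), and combined with the covering assumption \eqref{eq:scattering-domains} and openness of the domains this means the pseudo-orbit can be built by scattering maps alone, with no repositioning steps; any inner iterates that are needed are absorbed inside Theorem~\ref{thm:GLS} as the $F^{k_i}$, exactly as you note at the end.
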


To prove Theorem \ref{th:mechanism-main}, we will use a slight modification
of a shadowing-type of result from \cite{GideaLlaveSeara14}.

\begin{theorem}
\label{thm:GLS} Assume that $F:M\to M$ is a symplectic $C^r$-diffeomorphism, 
$r\geq r_0$ \footnote[1]{%
The arguments done in detail in this paper seem to require only $r_0 \ge 2$
(or even 1).  In the proof of the result, \cite{GideaLlaveSeara14} use other
papers that require to take $r_0 = 3$. It is possible that the arguments in
those papers can be improved to smaller regularity requirements and this
will lower the value of $r_0$. Of course, in applications to celestial
mechanics, regularity is not an issue} , on a symplectic, compact manifold $M
$, and $\Lambda\subseteq M$ is a compact, normally hyperbolic invariant
manifold for $F$ which is also symplectic.

Also assume that there exists a finite family of homoclinic channels $%
\Gamma^j\subseteq M$, with corresponding scattering maps $\sigma^{\Gamma_j}$%
, for $j=1,\ldots, k$.

Let $\{x_i\}_{i=0,\ldots,n}$ be a sequence of points in $\Lambda$ obtained
by successive applications of scattering maps from the given family 
\begin{equation*}
x_{i+1}=\sigma^{\Gamma_\alpha(i)}(x_i)\text{ for some } \alpha(i)\in\{1,%
\ldots, k\},
\end{equation*}
for $i=0,\ldots,n-1$.

Then, for every $\delta>0$ there exist an orbit $\{z_i\}_{i=0,\ldots,n}$ of $%
F$ in $M$, with $z_{i+1}=F^{k_i}(z_i)$ for some $k_i>0$, such that $%
d(z_i,x_i)<\delta$ for all $i=0,\ldots,n$.
\end{theorem}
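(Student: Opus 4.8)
The plan is to realize the scattering--map pseudo--orbit $\{x_i\}_{i=0}^{n}$ as a concatenation of genuine homoclinic orbit segments of $F$ and then to thread all of them with a single true orbit, exploiting the normal hyperbolicity of $\Lambda$ together with the transversality encoded in the homoclinic channels. This is precisely the finite--shadowing core of the mechanism of \cite{GideaLlaveSeara14}; the only difference is that the pseudo--orbit here is generated by scattering maps alone, so no interpolated inner--dynamics steps are needed. \emph{Step 1: unpack the scattering maps.} By the definition of the scattering map, $x_{i+1}=\sigma^{\Gamma^{\alpha(i)}}(x_i)$ means that the homoclinic point $\mathbf y_i:=\bigl(\Omega_-^{\Gamma^{\alpha(i)}}\bigr)^{-1}(x_i)\in\Gamma^{\alpha(i)}$ lies in $W^u(x_i,F)\cap W^s(x_{i+1},F)$; equivalently $d(F^k(\mathbf y_i),F^k(x_i))\to 0$ exponentially as $k\to-\infty$ and $d(F^k(\mathbf y_i),F^k(x_{i+1}))\to 0$ exponentially as $k\to+\infty$. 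Thus each step of the pseudo--orbit is witnessed by a true orbit segment that first shadows the past orbit of $x_i$, makes a brief homoclinic excursion through $\Gamma^{\alpha(i)}$, and then shadows the future orbit of $x_{i+1}$. Because $M$ and $\Lambda$ are compact and the family of channels is finite, all the exponential rates, the sizes of the local stable and unstable manifolds, and the inclination--lemma constants used below are uniform.

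\emph{Step 2: glue consecutive excursions.} The excursion attached to $\mathbf y_i$ re-enters a neighbourhood of $\Lambda$ near the \emph{forward} orbit of $x_{i+1}$, i.e.\ near $F^{N}(x_{i+1})$ for $N$ large, rather than near $x_{i+1}$ itself, whereas the excursion attached to $\mathbf y_{i+1}$ leaves $\Lambda$ along the \emph{backward} orbit of $x_{i+1}$. To chain them I would pass to suitable iterates of the channels: by the invariance property (\ref{eq:invariance-property}), $F^{N}(\Gamma^{\alpha(i+1)})$ is again a homoclinic channel, with scattering map $F^{N}\circ\sigma^{\Gamma^{\alpha(i+1)}}\circ F^{-N}$ and homoclinic point $F^{N}(\mathbf y_{i+1})$; choosing the excursion lengths appropriately, the landing point of excursion $i$ matches the take--off point of the iterated excursion $i+1$ up to an exponentially small error, and the intermediate phase accumulated along $\Lambda$ is reabsorbed because $F|_\Lambda$ is area preserving on the compact manifold $\Lambda$, so by Poincar\'e recurrence the orbit of $x_{i+1}$ returns into any prescribed neighbourhood of $x_{i+1}$ after finitely many steps (for the exceptional non-recurrent points one first replaces $x_{i+1}$ by a $\delta/2$--close recurrent point and uses continuity of the scattering maps; in the application $\Lambda$ is foliated by periodic orbits, so recurrence is automatic). \emph{Step 3: shadow by correctly aligned windows.} I would make the concatenation rigorous with the method of correctly aligned windows / covering relations used in \cite{GideaLlaveSeara14}: build a finite chain of windows in $M$ --- around each $x_i$ a window adapted to the splitting $\mathbb R^{2n}=E^u\oplus E^s\oplus T\Lambda$ (expanding in the unstable and NHIM--tangent directions to be used, contracting in the stable ones), and between $x_i$ and $x_{i+1}$ a finite subchain of windows carried along the orbit of $\mathbf y_i$ from Step 2. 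The covering--relations theorem then yields a point $z$ whose forward orbit threads all the windows, and the times at which this orbit lies in the window around $x_i$ give the required points $z_i$ with $z_{i+1}=F^{k_i}(z_i)$, $k_i>0$, and $d(z_i,x_i)<\delta$.

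The main obstacle is Step 3 together with the phase bookkeeping of Step 2: arranging the intermediate windows (equivalently, the disks fed into the inclination lemma) so that the exit face of each excursion subchain is correctly aligned with the entrance face of the next NHIM window. This is exactly where the two transversality conditions of Definition \ref{def:homoclinic-channel} are used in full strength --- the splitting $T_{\mathbf y}W^s(\Lambda)\oplus T_{\mathbf y}W^u(\Lambda)=\mathbb R^{2n}$ guarantees that a disk transverse to $W^s(\Lambda)$ is spread by forward iteration across $W^u(\Lambda)$, and the fibre--transversality conditions control the direction in which it spreads so that it enters the next window --- and it is the part essentially inherited from \cite{GideaLlaveSeara14}. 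The announced modification is minor: since the pseudo--orbit comes from scattering maps only, no additional inner--dynamics windows are inserted between excursions; the passage along $\Lambda$ between consecutive excursions is governed entirely by $F|_\Lambda$ itself, as in Step 2. Finiteness of the sequence $\{x_i\}$ guarantees that only finitely many excursions and windows occur, so the errors introduced at each gluing can all be kept below $\delta$.
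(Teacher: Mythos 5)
Your proposal is correct in its architecture and matches the approach the paper describes for this result: realizing each scattering step as a genuine homoclinic excursion, using Poincar\'e recurrence of $F|_{\Lambda}$ (valid a.e.\ since $F|_{\Lambda}$ preserves the symplectic volume on the compact $\Lambda$) to reabsorb the phase accumulated between consecutive excursions, and then threading the whole chain with a single true orbit by the method of correctly aligned windows. Note that the paper itself does not carry out a detailed proof of Theorem~\ref{thm:GLS}; it cites \cite{GideaLlaveSeara14} and only sketches exactly this strategy (correctly aligned windows plus the observation that recurrence is the sole property of the inner dynamics required), so your sketch and the paper's outline coincide.
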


\begin{remark}
An analogous shadowing result to Theorem \ref{thm:GLS} can also be found in 
\cite{GT}.
\end{remark}

We emphasize that in the statement of Theorem \ref{thm:GLS} no conditions
are required on the inner dynamics. In the particular case when $\Lambda $
is an annulus, it is not required, for example, that $F$ restricted to the
annulus satisfies a twist condition, which is a standard condition in many
similar shadowing types of results.

The only property of the inner dynamics that is used in Theorem \ref{thm:GLS}
is that almost every point in $\Lambda $ is recurrent for $F_{\mid \Lambda }$%
. A point $x\in \Lambda $ is recurrent if for every neighborhood $V$ of $x$
in $\Lambda $, $F^{n}(x)\in V$ for some $n>0$. The Poincar\'{e} recurrence
theorem states that for a measure preserving map on a finite measure space,
a.e. point is recurrent. In the situation described by Theorem \ref{thm:GLS}%
, $F_{\mid \Lambda }$ is symplectic hence it preserves the volume form on $%
\Lambda $, and $\Lambda $ is compact hence of finite measure. Thus, by
Poincar\'{e} recurrence theorem, a.e. point in $\Lambda $ is recurrent for $%
F_{\mid \Lambda }$.

The times $k_i$ that appear in Theorem \ref{thm:GLS} depend on the choice of 
$\delta$, on the angle of intersection between $W^u(\Lambda)$ and $%
W^s(\Lambda)$ along the homoclinic channels $\Gamma^j$, and on the
ergodization time of the inner dynamics, i.e., the dynamics of $F$
restricted to $\Lambda$. The ergodization time can be quantitatively
estimated in some cases, for example, if the inner map is a small
perturbation of an integrable twist map.

Theorem \ref{thm:GLS} can be proved by using the method of correctly aligned
windows which is constructive. Thus, the existence of trajectories resulting
from this mechanism can be implemented in rigorous numerical arguments.

\begin{proof}[Proof of Theorem \protect\ref{th:mechanism-main}]
We will prove only the first statement of Theorem \ref{th:mechanism-main},
as the second one follows similarly.

Fix $0<a_{1}<a_{2}<1$. To apply Theorem \ref{thm:GLS} we have to produce a
sequence $\{x_{i}\}_{i=0,\ldots ,n}$ obtained by successively applying some
scattering map $\sigma _{\varepsilon ,\tau }^{\Gamma_j}$ at each step, with $%
I(x_{0})<a_{1}$ and $I(x_{n})>a_{2}$. We have that $[a_{1},a_{2}]\times 
\mathbb{T}^{1}\subseteq \bigcup_{j=1,\ldots ,k}\mathrm{dom}(s_{0}^{j})$.
Since each domain $\mathrm{dom}(s_{0}^{j})$ is an open set, by the
continuous dependence of $s_{\varepsilon ,\tau }^{j}$ on $\varepsilon $,
there exists $\varepsilon _{0}$ such that, for all $0<\varepsilon
<\varepsilon _{0}$ we have $[a_{1},a_{2}]\times \mathbb{T}^{1}\subseteq
\bigcup_{j=1,\ldots ,k}\mathrm{dom}(s_{\varepsilon ,\tau }^{j})$.

By compactness, the assumption that for every $(I,\theta )$ there exists $j$
with $\frac{\partial S_{0,\tau }^{j}}{\partial \theta }(s_{0}^{j}\left(
I,\theta \right) )<0$ implies that there exists $c>0$ such that for every $%
(I,\theta )$ there is a $j$ with $\frac{\partial S_{0,\tau }^{j}}{\partial
\theta }(s_{0}^{j}\left( I,\theta \right) )<-c$.

By (\ref{eq:s-eps-nonaut}) and (\ref{eq:s-rotation-form}) we have that 
\begin{eqnarray*}
I(s_{\varepsilon ,\tau }^{j})(I,\theta ) &=&I-\varepsilon \frac{\partial
S_{0,\tau }^{j}}{\partial \theta }(s_{0}^{j}\left( I,\theta \right)
)+O(\varepsilon ^{2}), \\
\theta (s_{\varepsilon ,\tau }^{j})(I,\theta ) &=&\omega ^{j}(I,\theta
)+\varepsilon \frac{\partial S_{0,\tau }^{j}}{\partial I}(s_{0}^{j}\left(
I,\theta \right) )+O(\varepsilon ^{2}),
\end{eqnarray*}%
for all $j=1,\ldots ,k$ and all $\varepsilon \in (0,\varepsilon _{0})$,
where by $I(s_{\varepsilon ,\tau }^{j})$, $\theta (s_{\varepsilon ,\tau
}^{j})$ we denote the $I$- and $\theta $-components of $s_{\varepsilon ,\tau
}^{j}$, respectively.

Since $\frac{\partial S_{0,\tau }^{j}}{\partial \theta }( s_{0}^{j}(
I,\theta)) <-c$, this implies, again for $\varepsilon _{0}$ small enough and
all $\varepsilon \in (0,\varepsilon _{0})$, that for every $(I_{a},\theta
_{a})$ there exists a $j=j(a)$ such that $s_{\varepsilon ,\tau
}^{j(a)}(I_{a},\theta _{a})=(I_{b},\theta _{b})$, with $I_{b}-I_{a}>c%
\varepsilon $.

Thus, choosing an initial point $(I_0,\theta_0)$ with $I_0<a_1$, we can
construct a sequence of points $(x_i)_{i=0,\ldots,n}\subseteq \mathcal{N}$,
with $x_i=(I_i,\theta_i)$ and $n=O(1/\varepsilon)$, such that $%
s^{j(i)}_{\varepsilon,\tau} (I_i,\theta_i)=(I_{i+1},\theta_{i+1})$, and $%
I_{i+1}-I_{i}>c\varepsilon$, for all $i=0,\ldots,n-1$, and $I_n>a_2$.

Now we consider the corresponding sequence of points in $\Lambda_{%
\varepsilon,\tau}$, obtained via the parametrization $k_{\varepsilon,\tau}$.
Let $y_i=k_{\varepsilon,\tau}(x_i)\in \Lambda_{\varepsilon,\tau}$, for $%
i=0,\ldots,n$. By the relation between $\sigma^j_{\varepsilon,\tau} $ and $%
s^{j}_{\varepsilon,\tau}$, we have that $y_{i+1}=\sigma^{j(i)}_{\varepsilon,%
\tau}(y_i)$, for $i=0,\ldots,n$. We recall that each $\sigma^{j(i)}_{%
\varepsilon,\tau}$ is an area preserving map on $\Lambda_{\varepsilon,\tau}$%
, and also that $\Phi_{\varepsilon,\tau,2\pi}$ is an area preserving map on $%
\Lambda_{\varepsilon,\tau}$.

By the smooth dependence of $k_{\varepsilon ,\tau }$ on $\varepsilon $ and
the compactness of $\mathcal{N}$, if $\varepsilon _{0}$ is small enough,
then for all $\varepsilon \in (0,\varepsilon _{0})$ and all $(I,\theta)\in%
\mathcal{N}$ we have 
\begin{eqnarray*}
d\left( k_{\varepsilon ,\tau }(I,\theta ),k_{0}(I,\theta )\right) &<&\rho /2.
\end{eqnarray*}
Let $\delta =\rho /2$. Theorem \ref{thm:GLS} implies that there exists an
orbit $(z_{i})_{i=0,\ldots ,n}$ of $\Phi _{\varepsilon ,\tau ,2\pi }$ such
that $z_{i+1}=\Phi _{\varepsilon ,\tau ,2\pi }^{k_{i}}(z_{i})$, for some $%
k_{i}>0$, and with $d(z_{i},y_{i})<\delta =\rho /2$, for all $i=0,\ldots ,n$%
. This means that%
\begin{eqnarray*}
d(z_{0},k_{0}\left( \mathcal{N}^{<}\left( a_{1}\right) \right) ) &\leq
&d\left( z_{0},k_{0}\left( I_{0},\theta _{0}\right) \right) \\
&\leq &d\left( z_{0},k_{\varepsilon ,\tau }(I_{0},\theta _{0})\right)
+d(k_{\varepsilon ,\tau }(I_{0},\theta _{0}),k_{0}\left( I_{0},\theta
_{0}\right) ) \\
&=&d\left( z_{0},y_{0}\right) +d(k_{\varepsilon ,\tau }(I_{0},\theta
_{0}),k_{0}\left( I_{0},\theta _{0}\right) ) \\
&<&\rho ,
\end{eqnarray*}
hence $z_{0}\in B_{\rho }^{<}(a_{1}).$ Analogous computation leads to $%
z_{n}\in B_{\rho }^{>}(a_{2})$. The orbit $(z_{i})_{i=0,\ldots ,n}$ is thus
a homoclinic orbit of the map $\Phi _{\varepsilon ,\tau ,2\pi }$ between $%
B_{\rho }^{<}(a_{1})$ and $B_{\rho }^{>}(a_{2})$, as claimed in the
statement.
\end{proof}

%TCIDATA{Version=5.00.0.2606}
%TCIDATA{LaTeXparent=0,0,MMFedit.tex}

\section{Diffusion in the restricted three body problem}

\label{sec:3bp}

In this section we give an application of the diffusion mechanism from
section \ref{sec:diffusion-mechanism}. The existence of diffusing orbits
will result from perturbing the planar circular restricted three body
problem (PCR3BP) to the planar elliptic restricted three body problem
(PER3BP). The discussion contained in this section combines an analytical
argument with a numerical one. For the analytical part, we show how the
scattering maps can be chosen, and formulate a theorem (Theorem \ref%
{th:diffusion-3bp}) which ensures diffusion under appropriate assumptions.
In section \ref{sec:num} we give numerical verification of Theorem \ref%
{th:diffusion-3bp}.

We believe that using rigorous computer assisted computations one can obtain
a proof of diffusion using our mechanism. This will be a subject of
forthcoming work. The assumptions that would need to be checked are listed
in section \ref{sec:future-work}. They require:

\begin{enumerate}
\item[1)] a computer assisted proof of transversal intersections of
manifolds and rigorous enclosures of homoclinic orbits in the PCR3BP,

\item[2)] a rigorous enclosures of integrals along homoclinic orbits.
\end{enumerate}

Results very similar to 1) are in \cite{Ca1} and very similar to 2) are in
\cite{CZ}.

\subsection{Planar circular restricted three body problem}

In the PCR3BP we consider the motion of an infinitesimal body under the
gravitational pull of two larger bodies (which we shall refer to as
primaries) of mass $\mu$ and $1-\mu$. The primaries move around the origin
on circular orbits of period $2\pi$ on the same plane as the infinitesimal
body. In this paper we consider the mass parameter $\mu=0.0009537$, which
corresponds to the rescaled mass of Jupiter in the Jupiter-Sun system.

The Hamiltonian of the problem is given by (see \cite{AM})
\begin{equation}
H(q,p,t)=\frac{p_{1}^{2}+p_{2}^{2}}{2}-\frac{1-\mu}{r_{1}(t)}-\frac{\mu}{%
r_{2}(t)},  \label{eq:H-non-rot}
\end{equation}
where $\left( p,q\right) =\left( q_{1},q_{2},p_{1},p_{2}\right) $ are the
coordinates and momenta of the infinitesimal body relative to the center of
mass of the primaries, and $r_{1}(t)$ and $r_{2}(t)$ are the distances from
the masses $1-\mu$ and $\mu$, respectively.

After introducing a new coordinate system $(x,y,p_{x},p_{y})$%
\begin{equation}
\begin{array}{ll}
x=q_{1}\cos t+q_{2}\sin t, & \quad p_{x}=p_{1}\cos t+p_{2}\sin t, \\
y=-q_{1}\sin t+q_{2}\cos t, & \quad p_{y}=-p_{1}\sin t+p_{2}\cos t,%
\end{array}
\label{eq:x,y-coordinates}
\end{equation}
which rotates together with the primaries, the primaries become motionless
(see Figure \ref{fig:forbidden-region}) and one obtains an autonomous
Hamiltonian%
\begin{equation}
H(x,y,p_{x},p_{y})=\frac{(p_{x}+y)^{2}+(p_{y}-x)^{2}}{2}-\Omega(x,y),
\label{eq:H-PRC3BP}
\end{equation}
where%
\begin{eqnarray}
\Omega(x,y) & =\frac{x^{2}+y^{2}}{2}+\frac{1-\mu}{r_{1}}+\frac{\mu}{r_{2}},
\nonumber \\
r_{1} & =\sqrt{(x-\mu)^{2}+y^{2}},\quad r_{2}=\sqrt{(x+1-\mu)^{2}+y^{2}}.
\nonumber
\end{eqnarray}

The motion of the infinitesimal body is given by
\begin{equation}
\mathbf{\dot{x}}=J\nabla H(\mathbf{x}),  \label{eq:PRC3BP}
\end{equation}
where $\mathbf{x}=(x,y,p_{x},p_{y})\in\mathbb{R}^{4}$.

The movement of the flow (\ref{eq:PRC3BP}) is restricted to the
hyper-surfaces determined by the energy level $h$,
\begin{equation}
M(h)=\{(x,y,p_{x},p_{y})\in \mathbb{R}^{4}|H(x,y,p_{x},p_{y})=h\}.
\label{eq:M-energy}
\end{equation}%
This means that movement in the $x,y$ coordinates is restricted to the so
called Hill's region defined by
\begin{equation}
R(h)=\{(x,y)\in \mathbb{R}^{2}|\Omega (x,y)\geq -h\}.  \nonumber
\end{equation}%
\begin{figure}[tbp]
\begin{center}
\includegraphics[height=2.4in]{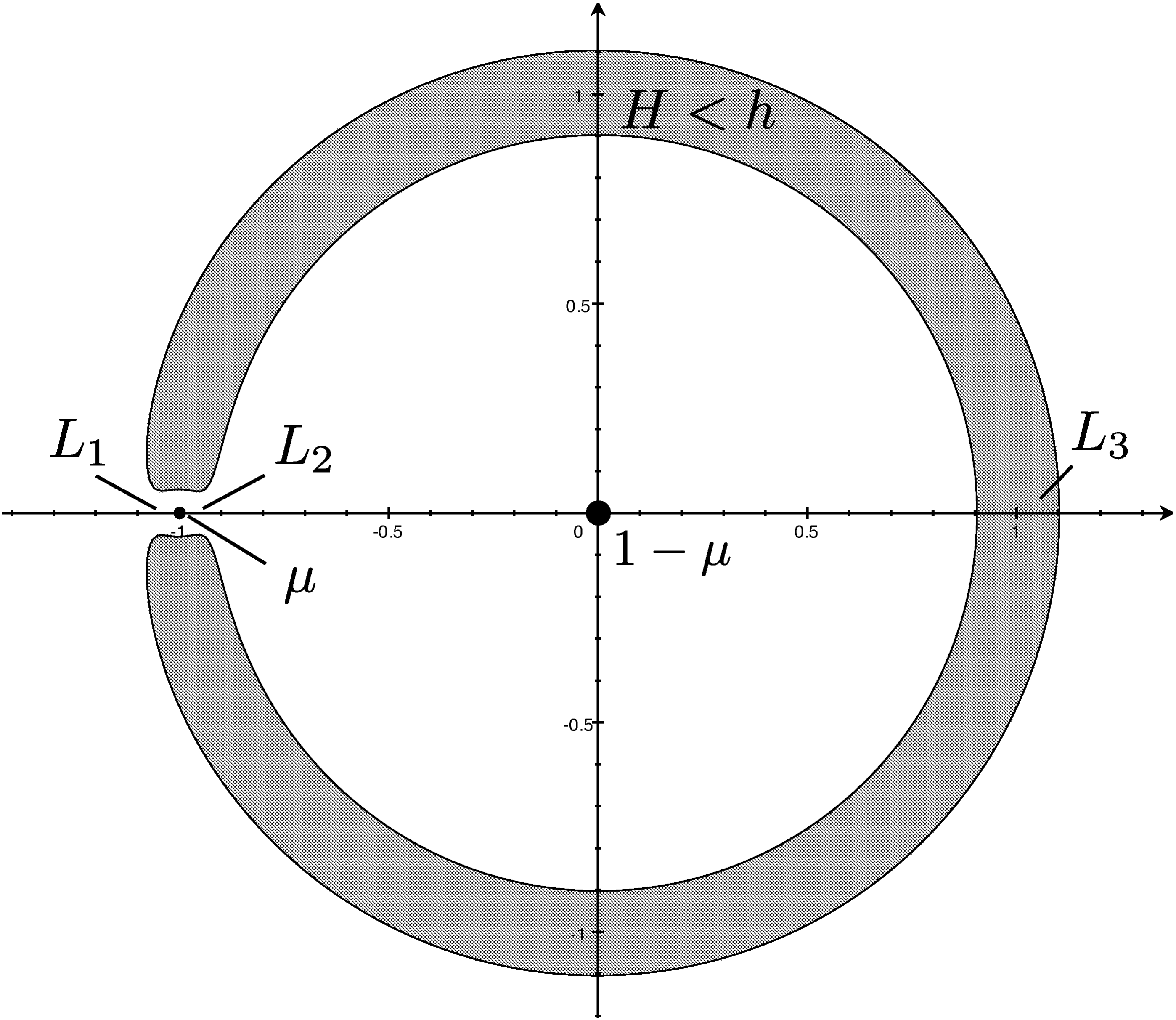}
\end{center}
\caption{The Hill's region for the energy level $h=1.515$ of comet Oterma in
the Jupiter-Sun system.}
\label{fig:forbidden-region}
\end{figure}

The problem has three equilibrium points on the $x$-axis, $L_{1},L_{2},L_{3}$
(see Figure \ref{fig:forbidden-region}), called the Lagrangian points. We
shall be interested in the dynamics associated with $L_{2}$, and with orbits
of energies higher than that of $L_{2}$. The linearized vector field at the
point $L_{2}$ has two real and two purely imaginary eigenvalues, thus by the
Lyapunov theorem (see for example \cite{Simo}, \cite{Moser}) for energies $h$
larger and sufficiently close to $H(L_{2})$ there exists a family of
periodic orbits parameterized by energy emanating from the equilibrium point
$L_{2}.$ Numerical evidence shows that this family extends up to, and even
goes beyond, the smaller primary $\mu$ \cite{Broucke}.

\begin{figure}[tbp]
\begin{center}
\includegraphics[height=2.0in]{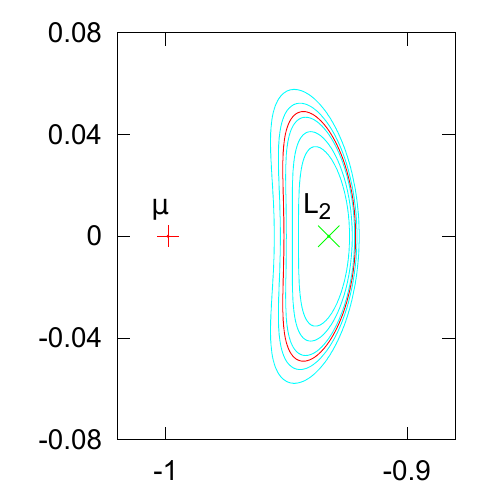}
\end{center}
\caption{A family of Lyapunov orbits in blue, together with the Lyapunov
orbit with energy close to that of the comet Oterma $h=1.\,\allowbreak 515$
in red.}
\label{fig:LapOrbits}
\end{figure}

The PCR3BP admits the following reversing symmetry
\begin{equation*}
\mathbb{S}(x,y,p_{x},p_{y})=(x,-y,-p_{x},p_{y}).
\end{equation*}
For the flow $\Phi_{t}(\mathbf{x})$ of (\ref{eq:PRC3BP}) we have%
\begin{equation}
\mathbb{S}(\Phi_{t}(\mathbf{x}))=\Phi_{-t}(\mathbb{S}(\mathbf{x})).
\label{eq:sym-prop}
\end{equation}
We say that an orbit $\Phi_{t}(\mathbf{x}_{0})$ is $\mathbb{S}$-symmetric
when
\begin{equation}
\mathbb{S}(\Phi_{t}(\mathbf{x}_{0}))=\Phi_{-t}(\mathbf{x}_{0}).
\label{eq:S-sym}
\end{equation}

Each Lyapunov orbit is $\mathbb{S}$-symmetric. When considered on the
constant energy manifold $M(h)$, each Lyapunov orbit is hyperbolic. It
possesses a two dimensional stable manifold and a two dimensional unstable
manifold. These manifolds lie on the same energy level as the orbit, and are
$\mathbb{S}$-symmetric with respect to each other, meaning that the stable
manifold is an image by $\mathbb{S}$ of the unstable manifold (see Figures %
\ref{fig:plot3d}, \ref{fig:plot2d}).

One can choose starting points $q(x^{\ast })$ on the Lyapunov orbits, of the
following form
\begin{equation*}
q(x^{\ast })=(x^{\ast },0,0,\kappa (x^{\ast })),
\end{equation*}%
where $\kappa (x^{\ast })$, representing the $p_y$-coordinate of the point $%
q(x^{\ast })$, is a smooth function that results from the energy condition (%
\ref{eq:M-energy}). Since each Lyapunov orbit intersects the $x$-axis at two
points (see Figure \ref{fig:LapOrbits}), there are two possible choices of $%
x^{\ast }$. One is to the left of $\pi_{x}L_{2},$ the other to the right. We
choose $x^{\ast }$ to be the point on the left. We use the notation $%
L(x^{\ast})$ to denote the Lyapunov orbit which includes $q(x^{\ast })$, and
the notation $T(x^{\ast })$ to denote the period of $L(x^{\ast})$.

We can choose a closed interval $\mathbf{I}\subset \mathbb{R}$ and consider
a normally hyperbolic manifold with a boundary defined as
\begin{equation}
\Lambda =\left\{ \Phi _{s}(q(x^{\ast }))|x^{\ast }\in \mathbf{I},s\in
\lbrack 0,T(x^{\ast }))\right\} .  \label{eq:Lambda-def}
\end{equation}%
Later on, as we formulate our computer assisted results, we shall specify
exactly what interval $\mathbf{I}$ is chosen. We can take a reference
manifold $\mathcal{N}$ for $\Lambda $ of the form,%
\begin{equation}
\mathcal{N}=\left\{ \left( x^{\ast },\theta \right) :x^{\ast }\in \mathbf{I}%
,\theta \in \lbrack 0,2\pi )\right\} ,  \label{eq:N-3bp}
\end{equation}%
with a parameterization%
\begin{equation}
k_{0}(x^{\ast },\theta )=\Phi _{\theta T(x^{\ast })/2\pi }(q(x^{\ast })).
\label{eq:k0-3bp}
\end{equation}%
On $\mathcal{N}$ we consider the flow $r_{0,t}$ induced by $\Phi _{t}$ via $%
k_{0}$,
\begin{equation}
r_{0,t}(x^{\ast },\theta )=\left( x^{\ast },\theta +t2\pi /T(x^{\ast
})\right) ,  \label{eq:r0-3bp}
\end{equation}%
so we naturally have
\begin{equation}
\Phi _{t}\circ k_{0}=k_{0}\circ r_{0,t},  \label{eq:cohomology-3bp}
\end{equation}%
as required.

\subsection{Some numerical observations}

Numerical evidence suggests that $\frac{d}{dx^{\ast }}T(x^{\ast })\neq 0.$
We claim that this implies that $r_{0,t}$ is a twist on $\mathcal{N}$. One
can immediately see that $r_{0,t}$ is exact symplectic. The twist condition
in these coordinates is
\begin{equation*}
\frac{\partial \text{pr}_{\theta }(r_{0,t})}{\partial x^{\ast }}(x^{\ast
},\theta )=-\frac{t2\pi }{T(x^{\ast })^{2}}\frac{dT(x^{\ast })}{dx^{\ast }}%
\neq 0,
\end{equation*}%
where $\text{pr}_{\theta }$ is the projection mapping onto the $\theta $%
-coordinate. This condition is obviously implied by the condition $\frac{d}{%
dx^{\ast }}T(x^{\ast })\neq 0.$

\begin{figure}[tbp]
\begin{center}
\includegraphics[height=3in]{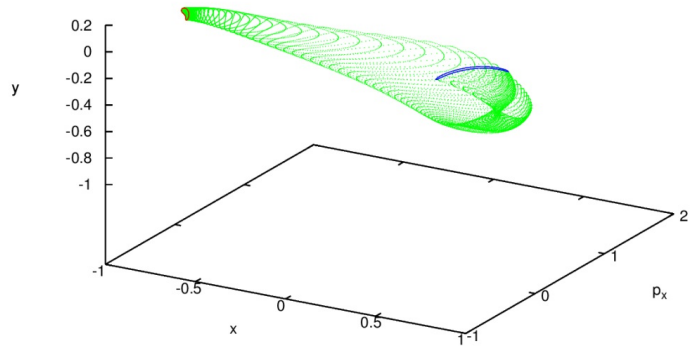}
\end{center}
\caption{A Lyapunov orbit in red, its unstable manifold in green, and the
intersection of the unstable manifold with section $\{y=0\}$ in blue,
projected onto $x,y,p_{x}$ coordinates. The figure is for the energy of
comet Oterma $h=1.515$ in the Jupiter-Sun system.}
\label{fig:plot3d}
\end{figure}

\begin{figure}[tbp]
\begin{center}
\includegraphics[height=1.9in]{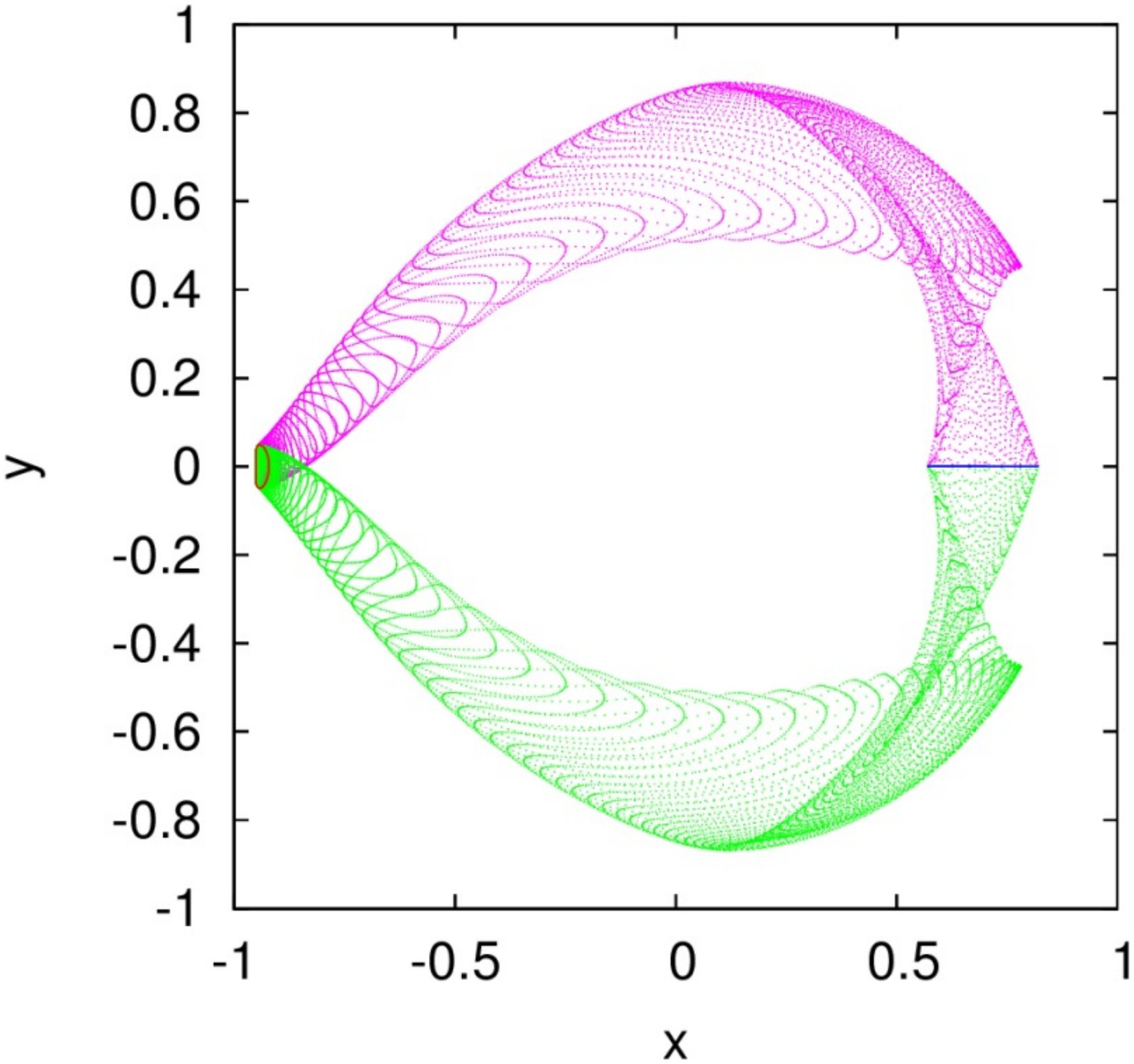}%
\includegraphics[height=1.9in]{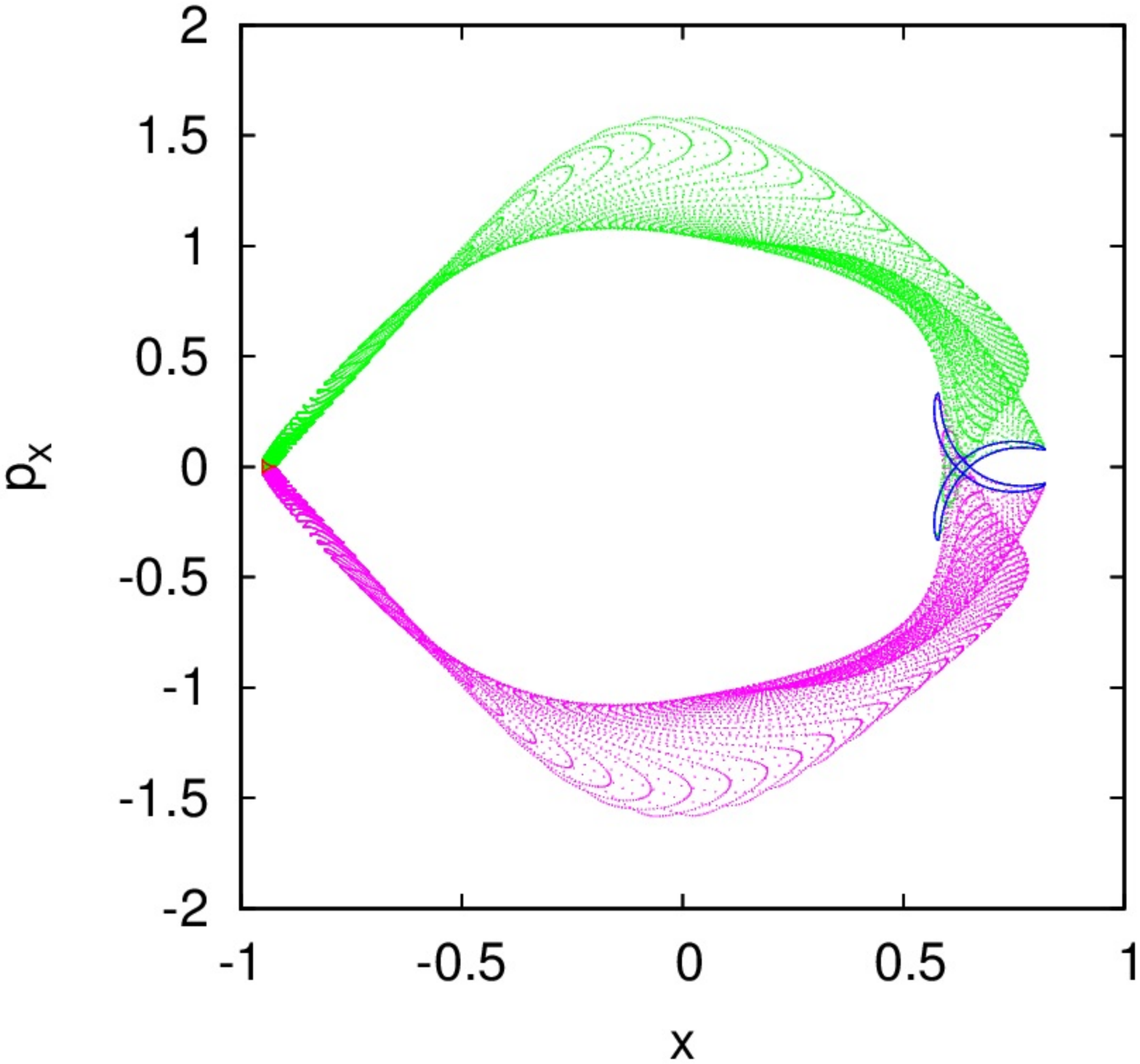}
\end{center}
\caption{The Lyapunov orbit in red, its unstable manifold in green, stable
manifold in purple, and their intersections with section $\{y=0\}$ in blue,
projected onto $x,y$ coordinates (left) and $x,p_{x}$ coordinates (right).
The figure is for the energy of comet Oterma $h=1.\,\allowbreak 515$ in the
Jupiter-Sun system.}
\label{fig:plot2d}
\end{figure}
\begin{figure}[tbp]
\begin{center}
\includegraphics[height=1.9in]{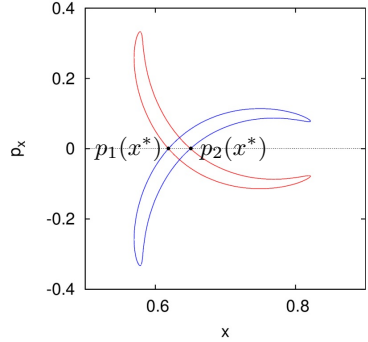}
\end{center}
\caption{The intersection of the manifolds $W^{u}(L(x))$ with $y=0$ in blue
and $W^{s}(L(x))$ with $y=0$ in red.}
\label{fig:plotInter}
\end{figure}

Numerical evidence suggests that intersection of $W^{u}\left( L(x^{\ast
})\right) $ and $W^{s}\left( L(x^{\ast })\right) $ contains four homoclinic
orbits. These orbits pass through four points, which can be seen on Figure %
\ref{fig:plot2d} (right), and Figure \ref{fig:plotInter}. The intersections
of $W^{u}\left( L(x^{\ast })\right) $ and $W^{s}\left( L(x^{\ast })\right) $
with $\{y=0\}$ are the \textquotedblleft banana-shaped\textquotedblright\
loops, which intersect along the four points. Two of these, on the $x$-axis,
are the points from which start the $\mathbb{S}$-symmetric homoclinic
orbits. The remaining two homoclinic orbits are not $\mathbb{S}$-symmetric.
Let us use the notation $p_{1}(x^{\ast })$ for the left and $p_{2}\left(
x^{\ast }\right) $ for the right one of the two $\mathbb{S}$-symmetric
intersection points.

Since the points $p_{1}\left( x^{\ast }\right) ,p_{2}\left( x^{\ast }\right)
$ lie on $W^{u}\left( L(x^{\ast })\right) ,$ for any $x^{\ast }\in \mathbf{I}
$ there exist two numbers $\omega _{1}(x^{\ast })$, $\omega _{2}(x^{\ast })$
such that $p_{i}(x^{\ast })\in W^{u}\left( k_{0}(x^{\ast },\omega
_{i}(x^{\ast }))\right) $. For any $\theta \in \mathbb{R}$,%
\begin{equation}
\begin{array}{l}
\Phi _{\left( \theta -\omega _{i}(x^{\ast })\right) T(x^{\ast })/2\pi
}\left( p_{i}(x^{\ast })\right) \medskip \\
\in W^{u}\left( \Phi _{\left( \theta -\omega _{i}(x^{\ast })\right)
T(x^{\ast })/2\pi }(k_{0}(x^{\ast },\omega _{i}(x^{\ast })))\right) \medskip
\\
=W^{u}\left( \Phi _{\left( \theta -\omega _{i}(x^{\ast })\right) T(x^{\ast
})/2\pi }\Phi _{\omega _{i}(x^{\ast })T(x^{\ast })/2\pi }(q(x^{\ast
}))\right) \medskip \\
=W^{u}\left( k_{0}(x^{\ast },\theta )\right) .%
\end{array}
\label{eq:wave-Wu}
\end{equation}%
Since $\Phi _{t}(p_{i}(x^{\ast }))$ are $\mathbb{S}$-symmetric, $%
p_{i}(x^{\ast })\in W^{s}\left( k_{0}(x^{\ast },-\omega _{i}(x^{\ast
}))\right) $. This implies
\begin{equation}
\begin{array}{l}
\Phi _{\left( \theta -\omega _{i}(x^{\ast })\right) T(x^{\ast })/2\pi
}\left( p_{i}(x^{\ast })\right) \medskip \\
\in W^{s}\left( \Phi _{\left( \theta -\omega _{i}(x^{\ast })\right)
T(x^{\ast })/2\pi }\left( k_{0}(x^{\ast },-\omega _{i}(x^{\ast }))\right)
\right) \medskip \\
=W^{s}\left( \Phi _{\left( \theta -\omega _{i}(x^{\ast })\right) T(x^{\ast
})/2\pi }\Phi _{-\omega _{i}(x^{\ast })T(x^{\ast })/2\pi }(q(x^{\ast
}))\right) \medskip \\
=W^{s}\left( k_{0}(x^{\ast },-2\omega _{i}(x^{\ast }))\right) .%
\end{array}
\label{eq:wave-Ws}
\end{equation}

We now define four homoclinic channels $\Gamma ^{i,j}$ for $i,j=1,2$ as
\begin{eqnarray}
\Gamma ^{i,1} &=&\{\Phi _{\left( \theta -\omega _{i}(x^{\ast })\right)
T(x^{\ast })/2\pi }(p_{i}(x^{\ast }))|x^{\ast }\in \mathbf{I},\theta \in
(-2\pi +\frac{\pi }{8},\frac{\pi }{8})\}  \label{eq:Gamma-i-def} \\
\Gamma ^{i,2} &=&\left\{ \Phi _{\left( \theta -\omega _{i}(x^{\ast })\right)
T(x^{\ast })/2\pi }(p_{i}(x^{\ast }))|x^{\ast }\in \mathbf{I},\theta \in
(0,2\pi )\right\}  \nonumber
\end{eqnarray}%
(These are depicted in Figure \ref{fig:Gamma}.) In total, we consider angles
for the range $\theta \in \left[ -2\pi ,2\pi \right] $, and for each $\theta
$ we have two homoclinic channels. Numerical evidence suggests that $\Gamma
^{i,j}$ lead to four well defined scattering maps%
\begin{equation*}
s_{0}^{i,j}:\mathcal{N}\rightarrow \mathcal{N}.
\end{equation*}%
In fact, we can use any small fragment from the homoclinic orbits as a
homoclinic channel, hence there are infinitely many of such channels. For
our purposes though, restricting to four channels will turn out to be enough
to apply Theorem \ref{th:mechanism-main}.

We shall now show that $s_{0}^{i,j}$ are of the form (\ref%
{eq:s-rotation-form}). From (\ref{eq:wave-Wu}), (\ref{eq:wave-Ws}) we see
that%
\begin{eqnarray}
\Omega _{-}^{\Gamma ^{i,j}}\left( \Phi _{\left( \theta -\omega _{i}(x^{\ast
})\right) T(x^{\ast })/2\pi }\left( p_{i}(x^{\ast })\right) \right) &
=k_{0}\left( x^{\ast },\theta \right) ,  \label{eq:wave-3bp-1} \\
\Omega _{+}^{\Gamma ^{i,j}}\left( \Phi _{\left( \theta -\omega _{i}(x^{\ast
})\right) T(x^{\ast })/2\pi }\left( p_{i}(x^{\ast })\right) \right) &
=k_{0}\left( x^{\ast },\theta -2\omega _{i}(x^{\ast })\right) ,
\label{eq:wave-3bp-2} \\
\sigma ^{\Gamma ^{i,j}}\left( k_{0}\left( x^{\ast },\theta \right) \right) &
=k_{0}\left( x^{\ast },\theta -2\omega _{i}(x^{\ast })\right) ,
\label{eq:scatter-3bp}
\end{eqnarray}%
hence%
\begin{equation}
s_{0}^{i,j}\left( x^{\ast },\theta \right) =k_{0}^{-1}\circ \sigma ^{\Gamma
^{i}}\circ k_{0}\left( x^{\ast },\theta \right) =\left( x^{\ast },\theta
-2\omega _{i}(x^{\ast })\right) .  \label{eq:s0-c3bp}
\end{equation}

\subsection{Planar elliptic restricted three body problem}

The planar restricted elliptic three body problem (PRE3BP) differs from the
PRC3BP by the fact that the two larger bodies move on elliptic orbits of
eccentricities $\varepsilon$ instead of circular orbits. The period of these
orbits is $2\pi.$

After introducing the rotating coordinates (\ref{eq:x,y-coordinates}), the
Hamiltonian of PRE3BP can be rewritten as
\begin{equation}
H_{\varepsilon}(\mathbf{x},t)=H(\mathbf{x})+\varepsilon G(\mathbf{x}%
,t)+O(\varepsilon^{2}),  \label{eq:H-PRE3BP}
\end{equation}
where $H$ is the Hamiltonian of the PRC3BP (\ref{eq:H-PRC3BP}), $G$ is $2\pi$
periodic over $t$ and is given by the formula \cite{CZ}
\begin{equation}
G=\frac{1-\mu}{\left( r_{1}\right) ^{3}}g(\mu,x,y,t)+\frac{\mu}{\left(
r_{2}\right) ^{3}}g(\mu-1,x,y,t),  \label{eq:Gdef}
\end{equation}%
\begin{equation}
g(\alpha,x,y,t)=\alpha(-2y\sin{t}+x\cos{t})-\alpha^{2}\cos{t}.
\label{eq:g-bar}
\end{equation}

We note that in coordinates (\ref{eq:x,y-coordinates}) the two primaries are
no longer stationary. The larger primary rotates on a small elliptic orbit
around the point $(x,y)=(\mu,0)$. Similarly, the smaller primary rotates on
an elliptic orbit around $(x,y)=(-1+\mu,0)$. Note also that $r_1$ and $r_2$
do not measure the distance of the infinitesimal body to the primaries, but
to the points $(\mu,0)$ and $(-1+\mu,0)$, respectively.

We also note that this is a different coordinate system than the one used by
Szebehely in \cite{Szebehely}, where he uses pulsating coordinates which
places the larger bodies in fixed locations.

The movement of an infinitesimal body under the gravitational pull of the
two primaries is given by the non-autonomous equation%
\begin{equation}
\mathbf{\dot{x}}=J\nabla_{\mathbf{x}}H_{\varepsilon}(\mathbf{x},t).
\label{eq:PRE3BP}
\end{equation}

Let $\Sigma _{t=\tau }=\left\{ \left( \mathbf{x},t\right) |t=\tau \right\} $
and $\Phi _{\varepsilon ,\tau ,2\pi }:\Sigma _{t=\tau }\rightarrow \Sigma
_{t=\tau }$ be the map induced by the time $2\pi $ shift along the flow of $%
H_{\varepsilon }$. As mentioned in the previous section, numerical evidence
suggests that, for $\varepsilon =0$, the period $T(x^{\ast })$ of a Lyapunov
orbit $L(x^{\ast })$, satisfies
\begin{equation}
\frac{d}{dx^{\ast }}T(x^{\ast })\neq 0,  \label{eq:twist-cond}
\end{equation}%
which by (\ref{eq:s0-c3bp}) is equivalent to $\Phi _{\varepsilon =0,\tau
,2\pi }$ being a twist map on $\Lambda $. We therefore formulate the
following theorem.

%\marginpar{
%MJC: small change}
We consider the system (\ref{eq:H-PRE3BP}) in the phase
space $\mathbb{R}^{4}\times \mathbb{T}^{1}$, extended to include the time.
The energy manifolds in the extended space are of the form $\tilde{M}(h)=M(h)\times \mathbb{T}^{1}$, and 
the vector field is of the form $\tilde X=(X,1)$, where $X$ is the Hamiltonian vector field associated to $H$ on $\mathbb{R}^{4}$.
The manifold $\tilde{\Lambda}=\Lambda \times \mathbb{T}^{1}$ is a normally
hyperbolic invariant manifold with boundary for the flow $\tilde{\Phi}_{t}$
of $\tilde X$  in the extended phase space. (That is the flow associated to the PCR3BP,   for $\varepsilon =0$.) 
%\marginpar{MG:Some details added}

\begin{theorem}
\label{th:kam-per3bp} Assume \[\frac{d}{dx^{\ast }}T(x^{\ast })\neq 0\] %\marginpar{MG: minor editing}
and also  
\begin{equation}
\frac{d}{dx^{\ast }}H(q(x^{\ast }))\neq 0\qquad \text{for }x^{\ast }\in
\mathbf{I}.  \label{eq:dHq-ne-zero-1}
\end{equation}%
Then for sufficiently small perturbation $\varepsilon $ the manifold $\tilde{%
\Lambda}$ is perturbed into a $O(\varepsilon )$ close normally hyperbolic
manifold $\tilde{\Lambda}_{\varepsilon }$, with boundary, which is invariant
under the flow induced by (\ref{eq:PRE3BP}). Moreover, there exists a Cantor
set $\mathfrak{C}_{\varepsilon }$ of invariant tori in $\tilde{\Lambda}%
_{\varepsilon }$.
\end{theorem}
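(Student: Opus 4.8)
The plan is to combine three ingredients: persistence of the normally hyperbolic invariant manifold (Theorem~\ref{th:nhim-pert}), the symplectic action-angle normalization of the perturbed manifold (Theorem~\ref{th:k-epsilon}), and the KAM Theorem~\ref{th:KAM} applied to the time-$2\pi$ return map of the inner dynamics. The hypothesis $\frac{d}{dx^{\ast}}H(q(x^{\ast}))\neq0$ is what makes $x^{\ast}$ interchangeable with the energy $h=H(q(x^{\ast}))$ on $\mathbf{I}$, hence makes $k_{0}$ a genuine embedding, the reference manifold $\mathcal{N}=\mathbf{I}\times\mathbb{T}^{1}$ legitimate, and the extension below natural; the hypothesis $\frac{d}{dx^{\ast}}T(x^{\ast})\neq0$ supplies the twist.

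The first step is to reduce to a boundaryless setting. Since the Lyapunov family continues past the endpoints of $\mathbf{I}$, enlarge $\mathbf{I}$ to an interval $\mathbf{I}'$ with $\mathbf{I}$ compactly contained in it and form the larger normally hyperbolic manifold with boundary $\tilde\Lambda'=\Lambda'\times\mathbb{T}^{1}$ over $\mathbf{I}'$; then extend the extended-phase-space field $\tilde X$ in a neighborhood of $\partial\tilde\Lambda'$ only, in the usual Fenichel manner, to a field $\tilde X^{\mathrm{ext}}$ whose normally hyperbolic invariant manifold $\hat\Lambda\supset\tilde\Lambda'$ is boundaryless. Since this modification is supported away from $\tilde\Lambda$, on a full neighborhood of $\tilde\Lambda$ one has $\tilde X^{\mathrm{ext}}=\tilde X$, so there $\tilde X^{\mathrm{ext}}$ remains genuinely Hamiltonian; this is the symplectic extension referred to in Section~\ref{sec:prel}, and that region is all the KAM step will use. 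Applying Theorem~\ref{th:nhim-pert} to the time-$2\pi$ map of the perturbed field $\tilde X^{\mathrm{ext}}_{\varepsilon}$ yields a boundaryless, $O(\varepsilon)$-close normally hyperbolic invariant manifold $\hat\Lambda_{\varepsilon}$; let $\tilde\Lambda_{\varepsilon}$ be the part of $\hat\Lambda_{\varepsilon}$ over $\mathbf{I}$. It is $O(\varepsilon)$-close to $\tilde\Lambda$, has boundary, and, since the field was not altered near $\tilde\Lambda$, is invariant under the genuine PER3BP flow on its interior (only locally invariant near the boundary, which, as already observed in the text, is harmless for the diffusion conclusion).

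The second step sets up the inner return map and its normal form. On $\tilde\Lambda_{\varepsilon}$ the restricted $2$-form $\omega|_{\tilde\Lambda_{\varepsilon}}$ is closed automatically, its restriction to each time-slice is non-degenerate for small $\varepsilon$ (non-degeneracy is open and $\omega|_{\tilde\Lambda}$ is symplectic by Lemma~\ref{lem:nhim-symplectic}), and it is preserved by the PER3BP flow. Cutting at $\{t=\tau\}$ gives $\Lambda_{\varepsilon,\tau}$ and the return map $g_{\varepsilon}:=\Phi_{\varepsilon,\tau,2\pi}|_{\Lambda_{\varepsilon,\tau}}$, a symplectomorphism of an annulus with $g_{0}$ the time-$2\pi$ map of \eqref{eq:r0-3bp}; it is moreover exact symplectic, because the time-$2\pi$ map of a $2\pi$-periodic Hamiltonian flow preserves the action integral $\oint p\,dq$ along loops (its infinitesimal rate of change is $\oint d_{\mathbf{x}}H_{\varepsilon}$, which vanishes on a closed loop). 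By Theorem~\ref{th:k-epsilon} (applied to $\hat\Lambda$ and restricted over $\mathbf{I}$) there is a parametrization $k_{\varepsilon,\tau}:\mathcal{N}\to\Lambda_{\varepsilon,\tau}$ with $k_{\varepsilon,\tau}^{\ast}\omega=\omega_{\mathcal{N}}$ independent of $\varepsilon$; transporting $g_{\varepsilon}$ to $\mathcal{N}$ gives $\hat g_{\varepsilon}=k_{\varepsilon,\tau}^{-1}\circ g_{\varepsilon}\circ k_{\varepsilon,\tau}$ with $\hat g_{0}=r_{0,2\pi}$. Invariance of the area form $\omega_{\mathcal{N}}$ under $r_{0,t}(x^{\ast},\theta)=(x^{\ast},\theta+2\pi t/T(x^{\ast}))$ for every $t$ forces $\omega_{\mathcal{N}}=\rho(x^{\ast})\,dx^{\ast}\wedge d\theta$; introducing the action $I$ with $dI=\rho(x^{\ast})\,dx^{\ast}$ and rescaling so that $(I,\theta)\in[0,1]\times\mathbb{T}^{1}$ gives $\omega_{\mathcal{N}}=dI\wedge d\theta$ and
\[
\hat g_{0}(I,\theta)=\bigl(I,\ \theta+A(I)\bigr),\qquad A(I)=\frac{4\pi^{2}}{T(x^{\ast}(I))},
\]
with $A'(I)=A'(x^{\ast})/\rho(x^{\ast})\neq0$ by $\frac{d}{dx^{\ast}}T(x^{\ast})\neq0$, hence $|A'|\geq M>0$ by compactness: the integrable twist of \eqref{eq:KAM-form}. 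Analyticity of the three-body problem makes $\hat g_{\varepsilon}$ as smooth as desired.

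The last step is KAM. Since $k_{\varepsilon,\tau}$, the flow, and $\Lambda_{\varepsilon,\tau}$ depend smoothly on $\varepsilon$ and $\mathcal{N}$ is compact, the $C^{l}$ norm of $\varepsilon^{-1}(\hat g_{\varepsilon}-\hat g_{0})$ is bounded uniformly in $\varepsilon$; after an $\varepsilon$-independent rescaling of $\varepsilon$ one may assume this norm is $\leq1$ with $l\geq6$, so the hypotheses of Theorem~\ref{th:KAM} are met. That theorem then produces, for all sufficiently small $\varepsilon$, invariant circles of $\hat g_{\varepsilon}$ with Diophantine rotation numbers of exponent $5/4$, which are $C^{l-3}$ graphs and cover the annulus except for a set of measure $O(M^{-1}\varepsilon^{1/2})$; for small $\varepsilon$ this exceptional set may be taken to contain a neighborhood of the boundary, so all these circles lie in the interior. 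Each such circle is invariant under $g_{\varepsilon}=\Phi_{\varepsilon,\tau,2\pi}$, so its orbit under the PER3BP flow is an invariant $2$-torus inside the interior of $\tilde\Lambda_{\varepsilon}$, and the union $\mathfrak{C}_{\varepsilon}$ of these tori is the asserted Cantor set. I expect the genuine obstacle to be the presence of the boundary of $\tilde\Lambda$: one must extend the vector field to invoke persistence while keeping the perturbed manifold over $\mathbf{I}$ symplectic with an \emph{exact} symplectic return map (KAM being stated only for exact symplectic maps), and one must certify that the surviving tori sit away from the (possibly non-invariant) boundary. Arranging the extension to coincide with the true Hamiltonian field near $\tilde\Lambda$ handles the first point, and the measure estimate in Theorem~\ref{th:KAM} the second; the remaining steps (twist, the normalization of $\omega_{\mathcal{N}}$, the $C^{l}$ bounds) are routine.
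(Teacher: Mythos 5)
Your overall outline (persistence of $\tilde\Lambda$, symplectic normalization via Theorem~\ref{th:k-epsilon}, KAM for the time-$2\pi$ inner map, twist from $\frac{d}{dx^{\ast}}T\neq0$, action coordinate from $\frac{d}{dx^{\ast}}H(q(x^\ast))\neq 0$) matches the paper, and your handling of the twist normalization and exactness of the return map is sound. The gap is concentrated in the persistence step, which is where the paper does real work.

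You write that extending $\tilde X$ near $\partial\tilde\Lambda'$ ``in the usual Fenichel manner'' produces a field whose NHIM $\hat\Lambda\supset\tilde\Lambda'$ is \emph{boundaryless}. This does not follow. The Lyapunov family does not close up, so modifying the field near $\partial\tilde\Lambda'$ can only make the boundary overflowing or inflowing; the persistent manifold obtained this way is still a manifold with boundary and is only \emph{locally} invariant, whereas the theorem asserts genuine invariance. To invoke Theorem~\ref{th:nhim-pert} as stated one really needs a compact NHIM \emph{without} boundary, and the paper manufactures one by a topological doubling: it restricts to $\bigcup_{h\in[a,b]}M(h)$, glues a second copy along the $b$-energy shell and again along the $a$-shell, so that the two copies of $\tilde\Lambda$ join to a $3$-torus $\widehat\Lambda$. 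You have omitted this step, and without it the appeal to Theorem~\ref{th:nhim-pert} is not justified.

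There is a second, related slip. To keep the extended dynamics symplectic the paper does \emph{not} extend the vector field directly; it replaces $H_\varepsilon$ by the modified Hamiltonian $\widehat H_\varepsilon(\mathbf x,t)=H(\mathbf x)+b(H(\mathbf x))\bigl[H_\varepsilon(\mathbf x,t)-H(\mathbf x)\bigr]$, whose flow is Hamiltonian everywhere, agrees with the PER3BP flow on $[a'',b'']$, and reduces to the autonomous PCR3BP flow outside $(a',b')$ (so the boundary orbits $L(a),L(b)$ remain exactly invariant and the doubling glues cleanly). Your ``the symplectic extension referred to in Section~\ref{sec:prel}'' is circular: the paper's Section~\ref{sec:prel} explicitly says it is \emph{not} aware of a general symplectic extension, and points to the proof of Theorem~\ref{th:kam-per3bp} itself as the example. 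The bump-in-the-Hamiltonian device, together with the doubling, is precisely the content you would need to supply. Once those two ingredients are in place, your remaining steps — the exactness of the return map, the normalization $\omega_{\mathcal N}=dI\wedge d\theta$, the computation $A(I)=4\pi^2/T(x^\ast(I))$, the $C^l$ bounds, and the final restriction to a smaller interval $\mathbf I'''$ so that the surviving tori lie where $\widehat H_\varepsilon$ and $H_\varepsilon$ coincide — are as in the paper.
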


\begin{proof}
The proof is based on combining the persistence result for normally
hyperbolic invariant manifolds theorem (Theorem \ref{th:nhim-pert}), with
the KAM Theorem \ref{th:KAM}. The technical issue that we need to address in
this proof is the fact that $\tilde{\Lambda}$ is a normally hyperbolic
manifold with boundary, but statement of Theorem \ref{th:nhim-pert} is for
compact manifolds without boundary. To apply Theorem \ref{th:nhim-pert} we
will modify the vector field induced by $H_{\varepsilon }$, so that $\tilde{%
\Lambda}$ will become a compact manifold without boundary after the
modification.

We now discuss the modification. Consider two closed intervals $\mathbf{I}%
^{\prime \prime },\mathbf{I}^{\prime }\subset \mathbb{R}$ satisfying $%
\mathbf{I}^{\prime \prime }\subset \mathrm{int}\mathbf{I}^{\prime },$ $%
\mathbf{I}^{\prime }\subset \mathrm{int}\mathbf{I}$ and let
\begin{eqnarray*}
\lbrack a,b] &:&=\{H(L\left( x^{\ast }\right) ):x^{\ast }\in \mathbf{I}\}, \\
\lbrack a^{\prime },b^{\prime }] &:&=\{H(L\left( x^{\ast }\right) ):x^{\ast
}\in \mathbf{I}^{\prime }\}, \\
\lbrack a^{\prime \prime },b^{\prime \prime }] &:&=\{H(L\left( x^{\ast
}\right) ):x^{\ast }\in \mathbf{I}^{\prime \prime }\}.
\end{eqnarray*}%
We have $[a^{\prime \prime },b^{\prime \prime }]\subset (a^{\prime
},b^{\prime }),$ $[a^{\prime },b^{\prime }]\subset (a,b)$. Consider the
following modified Hamiltonian%
\begin{equation}
\widehat{H}_{\varepsilon }(\mathbf{x},t)=H\left( \mathbf{x}\right) +b\left(
H(\mathbf{x})\right) \left[ H_{\varepsilon }(\mathbf{x},t)-H\left( \mathbf{x}%
\right) \right] ,  \label{eq:Mod-1-ham}
\end{equation}%
where $b:\mathbb{R}\rightarrow \lbrack 0,1]$ is a smooth `bump' function,
satisfying $b|_{\left[ a^{\prime \prime },b^{\prime \prime }\right] }=1,$ $%
b|_{\mathbb{R}\setminus \left( a^{\prime },b^{\prime }\right) }=0$.
\begin{figure}[tbp]
\begin{center}
\includegraphics[width=8.2cm]{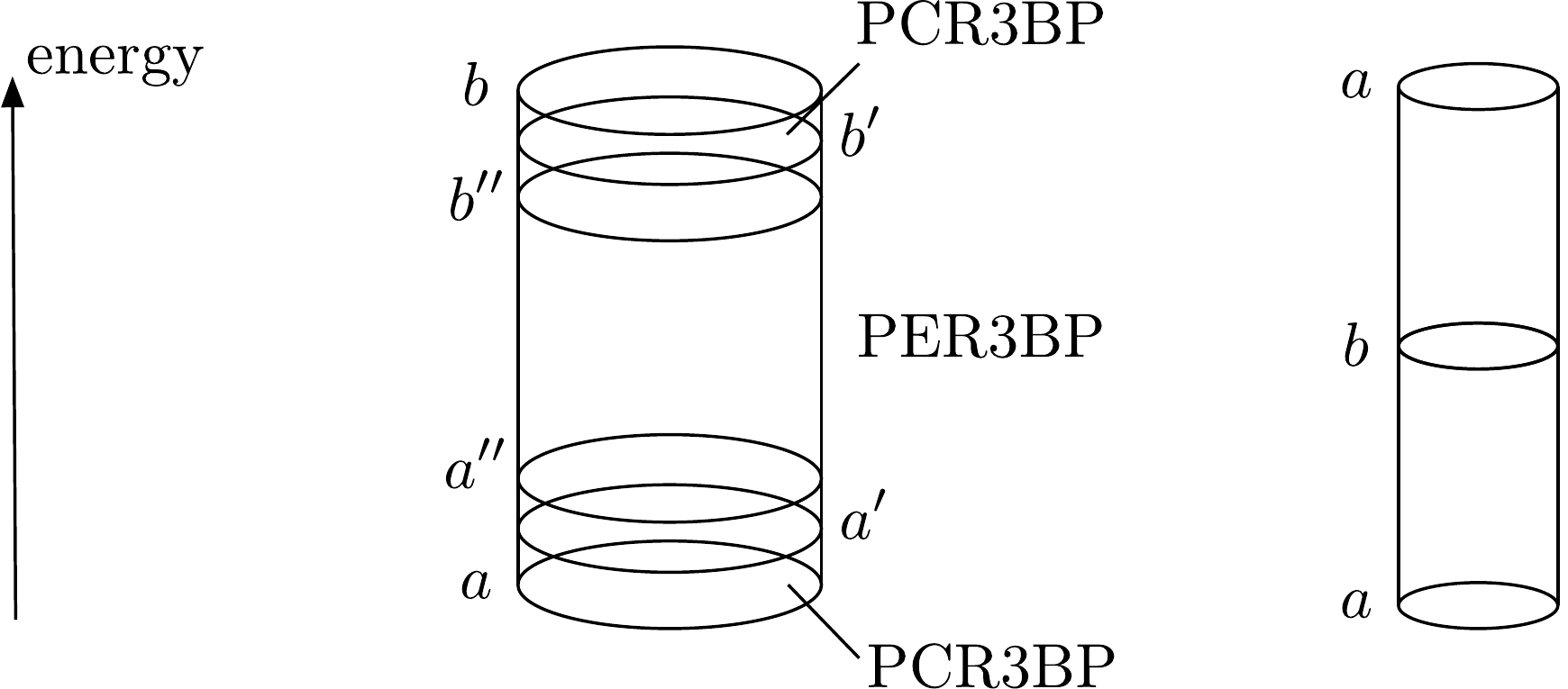}
\end{center}
\caption{Schematic plot of the modified Hamiltonian dynamics (left), and the
gluing (right).}
\label{fig:mod}
\end{figure}

We now note some facts about $\widehat{H}_{\varepsilon }$. For all points $%
\mathbf{x}$ with energies $H(\mathbf{x})$ outside of $\left( a^{\prime
},b^{\prime }\right) $, $\widehat{H}_{\varepsilon }(\mathbf{x},t)=H\left(
\mathbf{x}\right) $. This means that if we start with energies outside of $%
\left( a^{\prime },b^{\prime }\right) $, we are dealing with the PCR3BP (see Figure \ref{fig:mod}),
where energy is preserved. This in particular
implies that if we start with a point with energy in $\left( a^{\prime
},b^{\prime }\right) $, then the energy along the solution of the flow for $%
\widehat{H}_{\varepsilon }$ will never leave $\left( a^{\prime },b^{\prime
}\right) $. (If it were to reach the boundary of $\left( a^{\prime
},b^{\prime }\right) $, this would contradict the fact that on the boundary
of $\left( a^{\prime },b^{\prime }\right) $ the Hamiltonian is autonomous
and the energy is preserved). Another important fact is that for energies
inside of $\left[ a^{\prime \prime },b^{\prime \prime }\right] $ the flow of
(\ref{eq:Mod-1-ham}) coincides with the solution for PER3BP. For $%
\varepsilon =0$, the manifold $\tilde{\Lambda}$ is invariant under the flow
of $\widehat{H}_{\varepsilon }$.

The manifold $\tilde{\Lambda}$ is diffeomorphic to a $3$-dimensional annulus
$\mathbf{I}\times \mathbb{T}^{2}$, with one boundary component $L(a)\times
\mathbb{T}^{1}$ and the other boundary component $L(b)\times \mathbb{T}^{1}$, 
where $L(a)$, $L(b)$ are the Lyapunov orbits at energies $a$, $b$,
respectively. We restrict to the energy level sets $\bigcup_{h\in[a,b]}\tilde{M}(h)$, 
and glue another copy of $\bigcup_{h\in [a,b]}\tilde{M}(h)$, such
that the energy level of $b$ in the first copy matches with the energy level
of $b$ in the second copy (see Figure \ref{fig:mod}). Note that for energies
in $\left[ a,b\right] \setminus \left[ a^{\prime },b^{\prime }\right] $, $\hat H_0$
is preserved along each level set. %\marginpar{MJC: Removed your sentence and replaced back with my sentence.}
%Also note that vector field  $\hat X_0$  of $\hat H_0$ on $\bigcup_{h\in[a,b]}\tilde{M}(h)$ is  smooth along the gluing,  and  $\tilde{M}(a),\tilde{M}(b)$ 
%remain invariant under the flow of $\hat X_0$. 
%%%%%%%% sentence changed here:
We can modify the vector field so that it
is smooth along the gluing,  that $L(a),L(b)$ remain invariant, and so that the annulus can also be smoothly glued along $L(a)$. This modification is performed outside of $\bigcup_{h\in \lbrack a^{\prime },b^{\prime }]}M(h)$. We denote this vector
field by $\hat X_{\varepsilon }$.
%%%%%%%%
Let $\widehat{M}$ be the manifold in the
extended phase space obtained by the above gluing of level sets, that is $%
\widehat{M}=  \bigcup_{h\in [a,b]}M(h)\sqcup \bigcup_{h\in
\lbrack a,b]}M(h) $. The two copies of $\tilde{\Lambda}$   are smoothly glued along $L(a)\times\mathbb{T}^{1}$, and the corresponding gluing of the 
the two copies of $\tilde{\Lambda}$ inside $\widehat{M}$ results in a $3$%
-dimensional annulus $\widehat{\Lambda }$ whose boundary components at both ends are identical to 
$L(a)\times \mathbb{T}^{1}$. Hence, for $\varepsilon =0$ we have that 
$\widehat{\Lambda }$ is diffeomorphic to a
$3$-dimensional torus. Thus, $\widehat{\Lambda }$ 
can be viewed as a compact, normally hyperbolic invariant manifold  for $\hat X_{0}$. %\marginpar{MG: in the above paragraph changed some notation and corrected a few errors  }

For $\varepsilon >0$ sufficiently small, we apply Theorem \ref%
{th:nhim-pert}, to a time shift map along the flow of $\hat{X}_{\varepsilon}$%associated to $\hat H_{\varepsilon}$
, obtaining $\widehat{\Lambda }_{\varepsilon }$. %\marginpar{MJC: I removed ``associated to $\hat H_{\varepsilon}$" since $\hat X_{\varepsilon}$ needed a modification for gluing.}

Note that the flow corresponding to the vector field $\hat X_{\varepsilon}$  
has no effect on the boundary $L(a),L(b)$ of the annulus $\tilde{\Lambda}$.
As $\widehat{\Lambda }$ consists of two copies of $\tilde{\Lambda}$, we can
now restrict to only one of the copies, concluding the existence of a
normally hyperbolic manifold with boundary $\tilde{\Lambda}_{\varepsilon
}\subset \widehat{\Lambda }_{\varepsilon }$ that survives to $\tilde{\Lambda}
$. The boundary of $\tilde{\Lambda}_{\varepsilon }$ is invariant under the
flow of $\hat X_{\varepsilon }$. The only role of $\widehat{\Lambda }$ was to be
able to apply Theorem \ref{th:nhim-pert} \emph{ad litteram}, as $\widehat{%
\Lambda }$ can be viewed as a compact manifold without boundary. From now we
give up entirely on $\widehat{\Lambda }$, and we refer only to $\tilde{%
\Lambda}_{\varepsilon }$. %\marginpar{MG: in the above paragraph changed some notation}

We remark here that $\tilde{\Lambda}_{\varepsilon }$ depends on the choice
of the `bump' function $b$. We will address this issue in what follows.

For $\left( \mathbf{x},\tau \right) \in \tilde{\Lambda}$ and fixed $t>0$,
the eigenvalues of $D_{\mathbf{x}}\tilde{\Phi}_{t}(\mathbf{x},\tau)$ are $\lambda
_{1},\lambda _{2},1,1,1,$ with $\left\vert \mathrm{re}\lambda
_{1}\right\vert >1>\left\vert \mathrm{re}\lambda _{2}\right\vert$. We can
therefore choose $\mu $ from Definition \ref{def:nhim} arbitrarily close to
one, which by Theorem \ref{th:nhim-pert} means that $\widehat{\Lambda}%
_{\varepsilon }$ are $C^{l-1}$ smooth with arbitrarily large $l$ (for us it
is enough to have $l\geq 7$).

Let $\Lambda _{\varepsilon ,\tau }=\tilde{\Lambda}_{\varepsilon }\cap \Sigma
_{t=\tau }$. Let $\hat{\Phi}_{\varepsilon ,\tau ,t}$ be the solution of the
ODE induced by (\ref{eq:Mod-1-ham}). Since $\widehat{H}_{\varepsilon }$ is $%
2\pi $ periodic, we can consider the map $\hat{\Phi}_{\varepsilon ,\tau
,2\pi }:\Lambda _{\varepsilon ,\tau }\rightarrow \Lambda _{\varepsilon ,\tau
}$. Our next step is to apply Theorem \ref{th:KAM} to it. It is a well known
property of Hamiltonian systems that a time shift map along a trajectory of
the system is exact symplectic, hence $\hat{\Phi}_{\varepsilon ,\tau ,2\pi }$
is exact symplectic with the standard symplectic form. By Lemma \ref%
{lem:nhim-symplectic}, $\omega |_{\Lambda }$ is non-degenerate. Let us now
consider the reference manifold $\mathcal{N}$ given by (\ref{eq:N-3bp}), and
the parameterization $k_{0}$ from (\ref{eq:k0-3bp}). By Theorem \ref%
{th:k-epsilon}, it is possible to choose coordinates $k_{\varepsilon ,\tau }:%
\mathcal{N}\rightarrow \Lambda _{\varepsilon ,\tau }$ so that $%
k_{\varepsilon ,\tau }^{\ast }\omega |_{\Lambda _{\varepsilon ,\tau
}}=k_{0}^{\ast }\omega |_{\Lambda }=\omega _{\mathcal{N}}$. We can now
define a $C^{l-1}$ smooth family of exact symplectic maps
\begin{equation*}
r_{\varepsilon ,\tau ,2\pi }:\mathcal{N}\rightarrow \mathcal{N}
\end{equation*}%
as
\begin{equation*}
r_{\varepsilon ,\tau ,2\pi }=k_{\varepsilon ,\tau }^{-1}\circ \hat{\Phi}%
_{\varepsilon ,\tau ,2\pi }\circ k_{\varepsilon ,\tau }.
\end{equation*}%
By (\ref{eq:r0-3bp}) and (\ref{eq:twist-cond}) we see that $r_{0,\tau ,2\pi
}=r_{0,2\pi }$ is a twist map. We can therefore apply Theorem \ref{th:KAM}
to obtain a family of invariant tori for $r_{\varepsilon ,\tau ,2\pi }$. The
Lyapunov orbits on $\Lambda $ play the role of the unperturbed tori of
Theorem \ref{th:KAM}. Their rotation numbers are determined by the choice of
$x^{\ast }\in \mathbf{I}.$ We thus have a family of invariant tori $%
u_{x^{\ast }}$ of $r_{\varepsilon ,\tau ,2\pi }$ for a Cantor set $\mathfrak{%
G}_{\varepsilon ,\tau }^{\mathbf{I}}$ of $x^{\ast }$ in $\mathbf{I}.$ We
have now obtained our Cantor set $\mathfrak{C}_{\varepsilon }$ defined by
\begin{equation*}
\mathfrak{C}_{\varepsilon }=\left\{ \left( k_{\varepsilon ,\tau }\left(
u_{x^{\ast }}\right) ,\tau \right) :x^{\ast }\in \mathfrak{G}_{\varepsilon
,\tau }^{\mathbf{I}}\subset \mathbf{I},\tau \in \mathbb{T}^{1}\right\} ,
\end{equation*}%
as claimed in the statement of the theorem. What is left though, is to deal
with the fact that these tori were obtained for the modified Hamiltonian.

Let $\mathbf{I}^{\prime \prime \prime }\subset \mathrm{int}\mathbf{I}%
^{\prime \prime }$ and $\left[ a^{\prime \prime \prime },b^{\prime \prime
\prime }\right] =\{H(L\left( x^{\ast }\right) ):x^{\ast }\in \mathbf{I}%
^{\prime \prime \prime }\}$. By the smooth dependence of the flow on the
parameter $\varepsilon $, for sufficiently small $\varepsilon _{0}$, for any
$\mathbf{x}\in \Lambda _{\varepsilon ,\tau }$ satisfying $H\left( \mathbf{x}%
\right) \in \left[ a^{\prime \prime \prime },b^{\prime \prime \prime }\right]
$, any $t\in \left[ 0,2\pi \right] $ and any $\varepsilon \in \left[
0,\varepsilon _{0}\right] $, we have $H(\hat{\Phi}_{\varepsilon ,\tau
,t}\left( \mathbf{x}\right) )\in \left[ a^{\prime \prime },b^{\prime \prime }%
\right] $. Since for energies from $\left[ a^{\prime \prime },b^{\prime
\prime }\right] $ the trajectory for (\ref{eq:Mod-1-ham}) coinsides with the
trajectroy for the PER3BP, we have that $\hat{\Phi}_{\varepsilon ,\tau
,t}\left( \mathbf{x}\right) =\Phi _{\varepsilon ,\tau ,2\pi }\left( \mathbf{x%
}\right) $, so, the established invariant tori from $\mathfrak{C}%
_{\varepsilon ,\tau }^{\mathbf{I}^{\prime \prime \prime }}$ are in fact true
invariant tori for the PER3BP. We can restrict $\mathfrak{C}_{\varepsilon }$
to these tori, which finishes our proof.
\end{proof}

\begin{remark}
The invariant tori in $\mathfrak{C}_{\varepsilon }$ separate $\tilde{\Lambda}%
_{\varepsilon }$ forming an obstruction, which prohibits a large change of
energy using only the inner dynamics on $\tilde{\Lambda}_{\varepsilon }$.
\end{remark}

\begin{remark}
For existence of diffusion we need Theorem \ref{th:kam-per3bp} only for the
persistence of the manifold $\tilde{\Lambda}$ under perturbation. Since $%
\tilde{\Lambda}$ is a manifold with a boundary, the KAM part of the proof of
Theorem \ref{th:kam-per3bp} allows us to obtain persistence of the
boundaries of the manifold under perturbation.
\end{remark}

We now turn to the computation of perturbed scattering maps.

\begin{lemma}
\label{lem:scatter-e3bp}For any $x^{\ast }\in \mathbf{I}$ and $i,j=1,2$%
\begin{equation}
s_{\varepsilon ,\tau }^{i,j}=s_{0}^{i,j}+\varepsilon J\nabla S_{0,\tau
}^{i,j}\circ s_{0}^{i,j}+O\left( \varepsilon ^{2}\right) ,
\label{eq:s-for-3bp}
\end{equation}%
and%
\begin{eqnarray*}
\frac{\partial S_{0,\tau }^{i,j}}{\partial \theta }(s_{0}^{i,j}\left(
x^{\ast },\theta \right) )& =\frac{T(x^{\ast })}{2\pi }[-G\left( \Phi
_{\theta T(x^{\ast })/2\pi }\left( q(x^{\ast })\right) ,\tau \right) \\
& \qquad \quad +G\left( \Phi _{\left( \theta -2\omega _{i}(x^{\ast })\right)
T(x^{\ast })/2\pi }\left( q(x^{\ast })\right) ,\tau \right) \\
& \quad -\int_{-\infty }^{0}\frac{\partial G}{\partial t}\left( \Phi
_{u+(\theta -\omega _{i}(x^{\ast }))T(x^{\ast })/2\pi }\left( p_{i}(x^{\ast
})\right) ,\tau +u\right) \\
& \qquad \quad -\frac{\partial G}{\partial t}\left( \Phi _{u+\theta
T(x^{\ast })/2\pi }\left( q(x^{\ast })\right) ,\tau +u\right) du \\
& \quad -\int_{0}^{\infty }\frac{\partial G}{\partial t}\left( \Phi
_{u+\left( \theta -\omega _{i}(x^{\ast })\right) T(x^{\ast })/2\pi }\left(
p_{i}(x^{\ast })\right) ,\tau +u\right) \\
& \qquad \quad -\frac{\partial G}{\partial t}\left( \Phi _{u+\left( \theta
-2\omega _{i}(x^{\ast })\right) T(x^{\ast })/2\pi }\left( q(x^{\ast
})\right) ,\tau +u\right) du].
\end{eqnarray*}
\end{lemma}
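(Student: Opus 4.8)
The plan is to specialize the general formula of Theorem~\ref{th:Melnikov-potential-nonaut} to the concrete geometry of the PCR3BP worked out in the preceding subsections. Equation~\eqref{eq:s-for-3bp} is simply Theorem~\ref{th:Melnikov-potential-nonaut} restated with the scattering maps $s_0^{i,j}$ and potentials $S_{0,\tau}^{i,j}$ attached to the homoclinic channels $\Gamma^{i,j}$, so nothing new is required there; the content of the lemma is the explicit expression for $\partial S_{0,\tau}^{i,j}/\partial\theta$ evaluated at $s_0^{i,j}(x^\ast,\theta)$. First I would write down $S_{0,\tau}^{i,j}$ from \eqref{eq:S0tau-def}, and use the identities \eqref{eq:wave-3bp-1}, \eqref{eq:wave-3bp-2}, \eqref{eq:scatter-3bp} to identify all the compositions appearing in the integrand. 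Concretely, with the homoclinic point written as $\mathbf p_i(x^\ast,\theta):=\Phi_{(\theta-\omega_i(x^\ast))T(x^\ast)/2\pi}(p_i(x^\ast))$, we have $(\Omega_-^{\Gamma^{i,j}})^{-1}\circ k_0(x^\ast,\theta)=\mathbf p_i(x^\ast,\theta)$, while $(\sigma^{\Gamma^{i,j}})^{-1}\circ k_0(x^\ast,\theta)=k_0(x^\ast,\theta+2\omega_i(x^\ast))$ and $(\Omega_+^{\Gamma^{i,j}})^{-1}\circ k_0(x^\ast,\theta)=\mathbf p_i(x^\ast,\theta+2\omega_i(x^\ast))$ by \eqref{eq:wave-3bp-2}. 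Substituting $\mathbf x=s_0^{i,j}(x^\ast,\theta)=(x^\ast,\theta-2\omega_i(x^\ast))$ then collapses the arguments into $\Phi_u$ of either $p_i(x^\ast)$ (shifted by $(\theta-\omega_i)T/2\pi$) or of $q(x^\ast)$ (shifted by $\theta T/2\pi$ in the first pair and $(\theta-2\omega_i)T/2\pi$ in the second).

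Next I would take the $\theta$-derivative. Since $\theta$ enters only through the time-shift along $\Phi$, differentiating in $\theta$ produces a factor $T(x^\ast)/2\pi$ times a $u$-derivative of the integrand; writing $\frac{d}{d\theta}[(\text{something})\circ\Phi_{u+c\theta}]=c\,\frac{d}{du}[(\text{something})\circ\Phi_{u+c\theta}]$, each integral in \eqref{eq:S0tau-def} becomes $\frac{T(x^\ast)}{2\pi}$ times an integral of a total $u$-derivative of a difference of two terms. For the two limit-integrals the fundamental theorem of calculus applies: the $\int_{-T}^0$ integral contributes the integrand evaluated at $u=0$ minus its value at $u=-T$, and the $\int_0^T$ integral contributes the value at $u=T$ minus the value at $u=0$. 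The $u=\pm T$ endpoint contributions cancel in the $T\to\infty$ limit because the homoclinic point and the corresponding point on the NHIM converge to the same orbit on $\Lambda$ (this is exactly the convergence built into the definition of $W^{u,s}(\Lambda)$ and the wave maps, and is what makes the improper integrals in \eqref{eq:S0tau-def} converge in the first place); so what survives from differentiating the "potential-difference" part is just the $u=0$ boundary terms, namely $-G(\Phi_{\theta T/2\pi}(q(x^\ast)),\tau)+G(\Phi_{(\theta-2\omega_i)T/2\pi}(q(x^\ast)),\tau)$, matching the first two lines of the claimed formula. Care is needed with signs: the lower limit $u=-T\to-\infty$ piece on the $W^u$ branch and the upper limit $u=+T\to+\infty$ piece on the $W^s$ branch are the ones that vanish, while the $u=0$ pieces from the two integrals combine with the correct signs.

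There is, however, a second contribution to $\partial S_{0,\tau}^{i,j}/\partial\theta$ that the above paragraph ignores: $\theta$ also appears as the time-argument of $G$ itself only through the spatial shift, but $\tau$ is the explicit time slot and does \emph{not} depend on $\theta$, so in fact the $\partial G/\partial t$ terms in the claimed formula must arise from a different route. Rereading the statement, the two integral terms with $\partial G/\partial t$ come precisely from the parts of $S_{0,\tau}^{i,j}$ where the time argument is $\tau+u$ and the $\theta$-derivative of the \emph{spatial} argument, after the $\frac{d}{d\theta}=\frac{T}{2\pi}\frac{d}{du}$ trick and an integration by parts, transfers the $u$-derivative onto the $t$-slot of $G$, leaving $-\frac{T}{2\pi}\int \frac{\partial G}{\partial t}(\Phi_u(\cdot),\tau+u)\,du$ plus already-accounted boundary terms. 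Thus the whole computation is: (i) rewrite $S_{0,\tau}^{i,j}$ via \eqref{eq:wave-3bp-1}--\eqref{eq:scatter-3bp} and evaluate at $s_0^{i,j}(x^\ast,\theta)$; (ii) differentiate in $\theta$, using $\partial_\theta\Phi_{u+c\theta}=c\,\partial_u\Phi_{u+c\theta}$ with $c=T(x^\ast)/2\pi$; (iii) integrate by parts in $u$ on each of the two improper integrals; (iv) collect the $u=0$ boundary terms into the $\pm G(\cdot,\tau)$ terms and discard the $u=\pm\infty$ boundary terms by the asymptotic convergence of homoclinic orbits to $\Lambda$; (v) the remaining integrands are the $\partial G/\partial t$ integrals in the statement.

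\emph{Main obstacle.} The only genuinely delicate point is bookkeeping: matching every composition of wave maps, inverse scattering maps, and $\Phi_u$ against the right shifted base point $q(x^\ast)$ or homoclinic point $p_i(x^\ast)$, keeping the signs of $\omega_i(x^\ast)$ and the factor of $2$ straight through the $\mathbb S$-symmetry relation \eqref{eq:wave-Ws}, and verifying that the integration-by-parts boundary terms at $u\to\pm\infty$ really do cancel (which they must, since otherwise the integrals in \eqref{eq:S0tau-def} would already diverge). Once the substitutions are lined up correctly, the differentiation and integration by parts are routine. I would organize the proof by first stating the three substitution identities as a displayed lemma-internal computation, then carrying out steps (ii)--(v) for the generic channel $\Gamma^{i,j}$ simultaneously, since $j$ enters only through which piece of the homoclinic orbit is used as the channel and does not affect the formula.
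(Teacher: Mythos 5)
Your proposal is essentially the same computation the paper carries out, just organized in a different order. You identify the correct substitutions $(\Omega_-^{\Gamma^{i,j}})^{-1}\circ(\sigma^{\Gamma^{i,j}})^{-1}\circ k_0$, $(\sigma^{\Gamma^{i,j}})^{-1}\circ k_0$, $(\Omega_+^{\Gamma^{i,j}})^{-1}\circ k_0$, and the boundary/asymptotic structure, and you correctly observe that evaluating at $s_0^{i,j}(x^\ast,\theta)=(x^\ast,\theta-2\omega_i(x^\ast))$ commutes with $\partial_\theta$ because the shift $-2\omega_i$ is $\theta$-independent. The paper instead performs the change of variable $u\mapsto u-\theta T/2\pi$ before differentiating, so that $\theta$ migrates into the integration limits and into the explicit time slot $\tau+u-\theta T/2\pi$; then $\partial_\theta$ of the limits gives the two $\pm G(\cdot,\tau)$ boundary terms (with the $p_i$ contributions cancelling between the two integrals), and $\partial_\theta$ of the time slot gives the $\partial G/\partial t$ integrals. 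Finally it changes variable back. This is algebraically identical to your version.

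One genuine criticism: your middle paragraph is muddled, and a reader following it literally would be misled. You first assert $\frac{d}{d\theta}\bigl[\,\cdot\,\circ\Phi_{u+c\theta}\bigr]=c\,\frac{d}{du}\bigl[\,\cdot\,\circ\Phi_{u+c\theta}\bigr]$ and apply FTC, which, taken at face value, yields only boundary terms and misses the $\partial G/\partial t$ integrals entirely. Then you patch this with a claim about ``integration by parts transferring the $u$-derivative onto the $t$-slot of $G$,'' which is not what happens and is not well-defined as written; you also misstate where $\theta$ enters ($\theta$ does \emph{not} appear in the time argument $\tau+u$ of $G$ in the representation you are using). The correct and clean statement replacing both paragraphs is a single chain-rule identity: since $G$ depends on $u$ through both slots,
\begin{equation*}
\frac{\partial}{\partial\theta}G\bigl(\Phi_{u+c\theta}(\xi),\tau+u\bigr)
= c\,\frac{\partial}{\partial u}\Bigl[G\bigl(\Phi_{u+c\theta}(\xi),\tau+u\bigr)\Bigr]
- c\,\frac{\partial G}{\partial t}\bigl(\Phi_{u+c\theta}(\xi),\tau+u\bigr),
\qquad c=\frac{T(x^\ast)}{2\pi}.
\end{equation*}
Integrating the first term by the fundamental theorem of calculus (no integration by parts) gives the $u=0$ boundary values, with the $u\to\pm\infty$ contributions and the $p_i(x^\ast)$ contributions at $u=0$ cancelling as you say; the second term produces the $\partial G/\partial t$ integrals. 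With that replacement your argument is a correct alternative exposition of the paper's proof.
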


\begin{proof}
The proof follows by substituting (\ref{eq:wave-3bp-1}--\ref{eq:s0-c3bp})
into (\ref{eq:s-eps-nonaut}--\ref{eq:S0tau-def}) and computing the partial
derivative with respect to $\theta $. We give the details in \ref%
{sec:proof-scatter-lem}.
\end{proof}

We are now ready to formulate a theorem that can be used to obtain a proof
of diffusion in the restricted three body problem.

\begin{theorem}
\label{th:diffusion-3bp}Assume that for a given interval $\mathbf{I}$, for
any $x^{\ast }\in \mathbf{I}$ the manifolds $W^{u}\left( L(x^{\ast })\right)
,$ $W^{s}\left( L(x^{\ast })\right) $ intersect transversally, and that we
have two points $p_{1}(x^{\ast })\neq p_{2}(x^{\ast })$
\begin{equation*}
p_{1}(x^{\ast }),p_{2}(x^{\ast })\in W^{u}\left( L(x^{\ast })\right) \cap
W^{s}\left( L(x^{\ast })\right) \cap \{y=0,p_{x}=0\}.
\end{equation*}%
Assume that for $i,j=1,2$ the $\Gamma ^{i,j}$ defined in (\ref%
{eq:Gamma-i-def}) are homoclinic channels. Assume also that there exists $%
\tau ^{\ast }\in (0,2\pi ]$ such that for any $x^{\ast }\in \mathbf{I}$ and
for any $\theta \in \mathbb{T}^{1}$ there exist $i_{1},i_{2},j_{1},j_{2}\in
\{1,2\},$ such that%
\begin{eqnarray}
\frac{\partial S_{0,\tau ^{\ast }}^{i_{1},j_{1}}}{\partial \theta }%
(s_{0}^{i_{1},j_{1}}\left( x^{\ast },\theta \right) )& >0,
\label{eq:scatter-grad-1} \\
\frac{\partial S_{0,\tau ^{*}}^{i_{2},j_{2}}}{\partial \theta }%
(s_{0}^{i_{2},j_{2}}\left( x^{\ast },\theta \right) )& <0.
\label{eq:scatter-grad-2}
\end{eqnarray}%
Also assume that the assumptions of Theorem \ref{th:kam-per3bp} hold. Then
there exists an $\varepsilon ^{\ast }>0$ such that for all $\varepsilon \in
(0,\varepsilon ^{\ast })$, any $I_{1},I_{2}\in \mathbf{I}$, $I_{1}<I_{2}$
and any $\rho >0$ there exist heteroclinic orbits from $B_{\rho }^{<}(I_{1})$
to $B_{\rho }^{>}(I_{2})$ and a heteroclinic orbit from $B_{\rho
}^{>}(I_{2}) $ to $B_{\rho }^{<}(I_{1})$.
\end{theorem}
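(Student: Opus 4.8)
The strategy is to verify the hypotheses of Theorem~\ref{th:mechanism-main} twice, once in each direction, using the explicit form of the scattering maps derived in the previous subsections together with the KAM-based persistence furnished by Theorem~\ref{th:kam-per3bp}. First I would set up the unperturbed picture: the reference manifold $\mathcal{N}$ from~(\ref{eq:N-3bp}) with its action--angle coordinates $(x^{\ast},\theta)$, the four scattering maps $s_{0}^{i,j}$ which by~(\ref{eq:s0-c3bp}) have the rotation form $s_{0}^{i,j}(x^{\ast},\theta)=(x^{\ast},\theta-2\omega_i(x^{\ast}))$ required by~(\ref{eq:s-rotation-form}), and the first-order expansion~(\ref{eq:s-for-3bp}) of $s_{\varepsilon,\tau}^{i,j}$ whose relevant partial derivative $\partial_\theta S_{0,\tau}^{i,j}\circ s_0^{i,j}$ is given explicitly by Lemma~\ref{lem:scatter-e3bp}. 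The transversality assumption on $W^{u}(L(x^{\ast}))\cap W^{s}(L(x^{\ast}))$ and the assumption that the $\Gamma^{i,j}$ are homoclinic channels are exactly what is needed so that the perturbed channels $\Gamma^{i,j}_{\varepsilon,\tau}$ persist and the perturbed scattering maps $\sigma^{\Gamma^{i,j}}_{\varepsilon,\tau}$ are well defined for small $\varepsilon$; the assumptions of Theorem~\ref{th:kam-per3bp} (both $\tfrac{d}{dx^{\ast}}T\neq0$ and $\tfrac{d}{dx^{\ast}}H(q(x^{\ast}))\neq0$) guarantee that $\tilde\Lambda$ persists to a normally hyperbolic invariant manifold with boundary $\tilde\Lambda_{\varepsilon}$ carrying the standard symplectic structure, so that Poincar\'e recurrence applies to the inner dynamics.

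Next I would check that the covering hypothesis~(\ref{eq:scattering-domains}) is met on the relevant piece of $\mathcal{N}$. The point is that the four channels defined in~(\ref{eq:Gamma-i-def}) are indexed over the full range $\theta\in[-2\pi,2\pi]$ of homoclinic excursion phases, so for every $(x^{\ast},\theta)\in\mathbf{I}\times\mathbb{T}^{1}$ at least one $\Gamma^{i,j}$ (indeed several) has the corresponding point in its domain; the invariance property~(\ref{eq:invariance-property}) of the scattering map, combined with iterating under the inner twist, extends the domains as needed, exactly as explained in the discussion following~(\ref{eq:scattering-domains}). Thus $\mathrm{int}(\mathcal{N})\subseteq\bigcup_{i,j}\mathrm{dom}(s_0^{i,j})$, with $\mathbf{I}$ in place of $[0,1]$ after an affine reparametrization of the action.

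With this in place, I would invoke Theorem~\ref{th:mechanism-main} directly. For the heteroclinic orbit from $B_{\rho}^{<}(I_1)$ to $B_{\rho}^{>}(I_2)$ we use assumption~(\ref{eq:scatter-grad-2}): for every $(x^{\ast},\theta)$ there is a pair $(i_2,j_2)$ with $\partial_\theta S_{0,\tau^{\ast}}^{i_2,j_2}(s_0^{i_2,j_2}(x^{\ast},\theta))<0$, which is precisely hypothesis~(\ref{eq:scatter-grad-a1}) of Theorem~\ref{th:mechanism-main} (with the fixed time slice $\tau=\tau^{\ast}$). The conclusion gives, for every $I_1<I_2$ in the interior and all sufficiently small $\varepsilon$, an orbit of $\Phi_{\varepsilon,\tau^{\ast},2\pi}$ from $B_{\rho}^{<}(I_1)$ to $B_{\rho}^{>}(I_2)$; since this orbit accumulates on $k_0(\mathcal{N}^{<}(I_1))$ in the past and on $k_0(\mathcal{N}^{>}(I_2))$ in the future, and these are neighborhoods of Lyapunov orbits, it is a heteroclinic orbit of the desired type. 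Symmetrically, assumption~(\ref{eq:scatter-grad-1}) supplies hypothesis~(\ref{eq:scatter-grad-a2}) and yields an orbit from $B_{\rho}^{>}(I_2)$ to $B_{\rho}^{<}(I_1)$. Taking $\varepsilon^{\ast}$ to be the minimum of the two thresholds produced by the two applications, and handling the endpoint cases $I_1,I_2\in\mathbf{I}$ by shrinking $\mathbf{I}$ slightly inside the interior before applying the mechanism, completes the argument.

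\textbf{Main obstacle.} The genuinely delicate point is not the abstract mechanism — Theorem~\ref{th:mechanism-main} does all the shadowing work — but rather ensuring that the objects fed into it are legitimate: that the locally invariant perturbed manifold $\tilde\Lambda_{\varepsilon}$ (which depends on the extension used in Theorem~\ref{th:kam-per3bp}) is symplectic so Poincar\'e recurrence holds, that the homoclinic channels $\Gamma^{i,j}_{\varepsilon,\tau}$ persist with the transversality properties of Definition~\ref{def:homoclinic-channel} intact, and that the estimate~(\ref{eq:s-for-3bp}) together with the compactness argument upgrades the pointwise sign conditions~(\ref{eq:scatter-grad-1})--(\ref{eq:scatter-grad-2}) to a uniform bound $\partial_\theta S^{i,j}_{0,\tau^{\ast}}\circ s_0^{i,j}<-c$ (resp.\ $>c$) so that each scattering step moves the action by at least $c\varepsilon$. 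The remark after Theorem~\ref{th:kam-per3bp} already notes that for the diffusion conclusion we only need the persistence half of Theorem~\ref{th:kam-per3bp}, and the boundary-avoidance issue for the locally invariant manifold is disposed of by the standing observation that if orbits escape near the boundary they have already moved by order one, which would only strengthen the conclusion.
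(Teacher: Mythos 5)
Your proposal is correct and follows essentially the same route as the paper: invoke Theorem~\ref{th:kam-per3bp} for persistence of the normally hyperbolic manifold, then apply Theorem~\ref{th:mechanism-main} twice, once per sign condition, to obtain the two heteroclinic orbits. In fact you match the signs more carefully than the paper's own proof text does — you correctly pair condition~(\ref{eq:scatter-grad-2}) (the negative one) with hypothesis~(\ref{eq:scatter-grad-a1}) to get the increasing-action orbit, whereas the paper's printed proof cites~(\ref{eq:scatter-grad-1}) at that point, which appears to be a label slip.
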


\begin{proof}
By Theorem \ref{th:kam-per3bp} the manifold persists under perturbation.

By (\ref{eq:scatter-grad-1}) the assumption (\ref{eq:scatter-grad-a1}) from
Theorem \ref{th:mechanism-main} holds, hence there exists a homoclinic orbit
from $B_{\rho }^{<}(I_{1})$ to $B_{\rho }^{>}(I_{2})$.

The proof of a homoclinic orbit from $B_{\rho }^{>}(I_{2})$ to $B_{\rho
}^{<}(I_{1})$ also follows from Theorem \ref{th:mechanism-main}, using (\ref%
{eq:scatter-grad-2}).
\end{proof}

\begin{remark}
For any $I_{1}<I_{2},$ taking sufficiently small $\rho $ ensures that the
set $B_{\rho }^{>}(I_{2})$ has higher energy than $B_{\rho }^{<}(I_{1})$.
Thus the homoclinic orbits from Theorem \ref{th:diffusion-3bp} involve a
change of energy for arbitrarily small $\varepsilon >0.$ The size of the
change of the energy is determined by the choice of $I_{1},I_{2}$ and does
not depend on $\varepsilon$.
\end{remark}

\begin{remark}
Note that Theorem \ref{th:diffusion-3bp} establishes existence of diffusion
by showing that if there is no inner diffusion, there is diffusion along
homoclinic excursions. So, in either case there is diffusion. The KAM
theorem shows that there is no inner diffusion hence it is the alternative
of homoclinic excursions that happens.
\end{remark}

\section{Numerical verification of the hypothesis of Theorem \protect\ref%
{th:diffusion-3bp}}

\label{sec:num}

We shall give numerical evidence that assumptions (\ref{eq:scatter-grad-1}--%
\ref{eq:scatter-grad-2}) from Theorem \ref{th:diffusion-3bp} are satisfied.
For this we need to numerically compute integrals along the orbits $\Phi
_{t}\left( p_{1}(x^{\ast })\right) $, $\Phi _{t}\left( p_{2}(x^{\ast
})\right) $ and $\Phi _{t}\left( q(x^{\ast })\right) .$ We describe the
procedure of how this can be done.

The $q(x^{\ast })$ and $T(x^{\ast })$ are found as follows. We consider a
section $\Sigma =\left\{ y=0\right\} $ and a Poincar\'{e} map $P:\Sigma
\rightarrow \Sigma $. If for a point $q=(x,0,0,p_{y})\in \Sigma $ we have $%
\pi _{p_{x}}P(q)=0$, then by the symmetry property (\ref{eq:sym-prop}), the
point $q$ lies on a periodic orbit (the Poincar\'{e} map $P$ makes a half
turn along the orbit starting from $q$). Thus, for a given $x^{\ast }$, the
point $q(x^{\ast })=(x,0,0,\kappa (x^{\ast }))$ can be found by considering
a function $h:\mathbb{R\rightarrow R}$ defined as $h(p_{y})=\pi
_{p_{x}}P(x^{\ast },0,0,p_{y}),$ and numerically solving $h(p_{y})=0$.
Having found $q(x^{\ast }),$ the first time to reach $\Sigma $ along the
flow is $T(x^{\ast })/2.$

We now discuss how we compute $p_{1}(x^{\ast })$ and $p_{2}(x^{\ast }).$ The
$q(x^{\ast })$ is a fixed point for the $T(x^{\ast })$ time shift along the
trajectory map $\Phi _{T(x^{\ast })}$. The one dimensional unstable manifold
of the map $\Phi _{T(x^{\ast })}$ at $q(x^{\ast })$ is equal to the unstable
fiber $W^{u}(q(x^{\ast }))$ of the flow. We consider the matrix $A=D\Phi
_{T(x^{\ast })}(q(x^{\ast })).$ The unstable eigenvector $v$ of $A$ is
colinear with $T_{q(x^{\ast })}W^{u}(q(x^{\ast })).$ For sufficiently small $%
h\in \mathbb{R}$, the point $q_{h}=q(x^{\ast })+hv$ is a good approximation
of a point on $W^{u}(q(x^{\ast })),$ and $v$ is a good approximation of $%
T_{q_{h}}W^{u}(q(x^{\ast }))$. We can propagate $q_{h}$ along the flow to
the section $\Sigma _{x>0}=\{y=0,x>0\}$. Let $\tau (h)$ be the time from $%
q_{h}$ to $\Sigma _{x>0}$. If we can find such $h$, so that $\pi
_{p_{x}}\Phi _{\tau (h)}(q_{h})=0$, then by the symmetry property (\ref%
{eq:sym-prop}), the point $\Phi _{\tau (h)}(q_{h})$ is an approximation of a
point on a symmetric homoclinic orbit to $L(x^{\ast })$. This way we
numerically compute $p_{i}(x^{\ast })$ as $\Phi _{\tau (h_{i})}(q_{h_{i}})$,
for $i=1,2$. From the computation we also obtain $\omega _{i}(x^{\ast
})=2\pi \tau (h_{i})/T(x^{\ast })$\textrm{\ mod }$2\pi $, for $i=1,2$.

For any $q\in L(x^{\ast }),$ the unstable fiber $W^{u}(q)$ can be computed
by propagating $W^{u}(q(x^{\ast }))$ along the flow. Thus, since $%
p_{i}(x^{\ast })_{-}\in L(x^{\ast })$, the $T_{p_{i}(x^{\ast
})}W^{u}(p_{i}(x^{\ast })_{-})$ are approximated by $D\Phi _{\tau
(h_{i})}(q_{h_{i}})v$, for $i=1,2$.

We now focus on $x^{\ast }=-0.95$. Using the above outlined procedure, we
obtain the following numerical results:%
\begin{eqnarray*}
q(x^{\ast }) &=&\left( -0.95,0,0,-0.8413472441\right) , \\
T(x^{\ast }) &=&3.041751775, \\
p_{1}(x^{\ast }) &=&\left( 0.6207553555,0,0,1.38203433\right) , \\
p_{2}(x^{\ast }) &=&\left( 0.6514581118,0,0,1.334413389\right) , \\
\omega _{1}(x^{\ast }) &=&1.451540621, \\
\omega _{2}(x^{\ast }) &=&-0.2527863329,
\end{eqnarray*}%
and $T_{p_{i}(x^{\ast })}W^{u}(p_{i}(x^{\ast })_{-})=\mathrm{span}\left\{
v_{i}\right\} $, for $i=1,2$, with%
\begin{equation}
\begin{array}{rrrr}
v_{1}~=~(1, & -9.823658901, & 17.9416819, & -1.60121149),\medskip \\
v_{2}~=~(1, & -4.42688411, & 8.405095683, & -1.503579624).%
\end{array}
\label{eq:vi-num}
\end{equation}

We use a linearization to approximate $W^{u}(q).$ Close to $L(x^{\ast })$,
such approximation is accurate. We integrate the flow using a high precision
(order 20) Taylor method. Thus, the resulting approximation of the points $%
p_{1}(x^{\ast }),$ $p_{2}(x^{\ast })$ is reliable.

Based on the above $p_{1}(x^{\ast }),$ $p_{2}(x^{\ast }),$ we can
numerically compute the homoclinic orbits $\Phi _{t}\left( p_{1}(x^{\ast
})\right) $, $\Phi _{t}\left( p_{2}(x^{\ast })\right) $ (see Figure \ref%
{fig:Gamma095}), together with the corresponding fragments of the homoclinic
channels $\Gamma ^{i,j}$ defined in (\ref{eq:Gamma-i-def}), for $i,j=1,2$.

\begin{figure}[tbp]
\begin{center}
\begin{tabular}{l}
\includegraphics[height=1.9in]{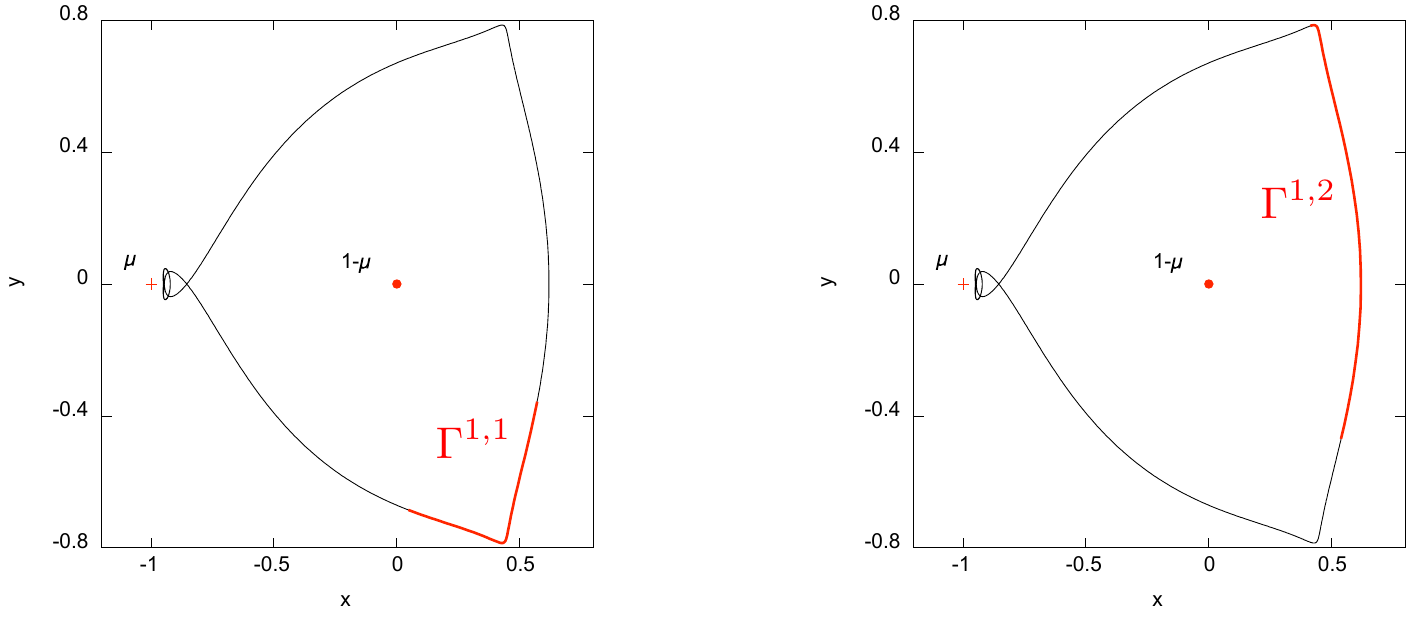} \\
\includegraphics[height=1.9in]{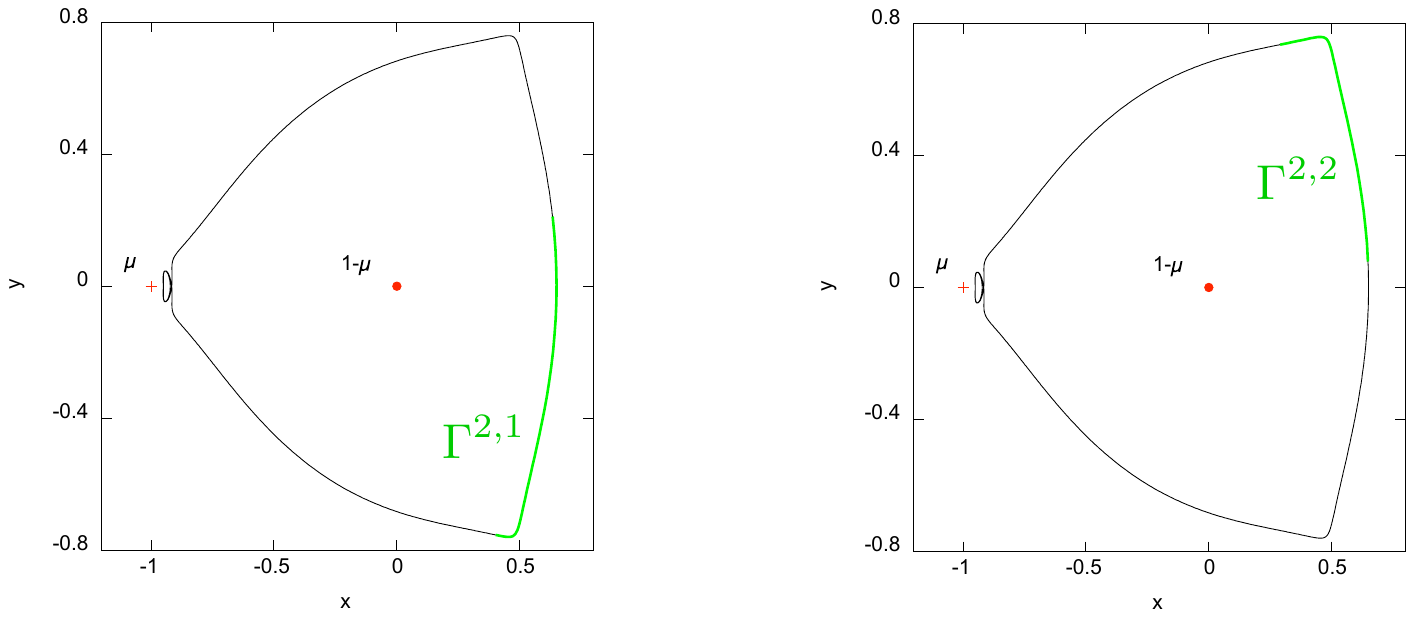}%
\end{tabular}%
\end{center}
\caption{Homoclinic orbit $\Phi _{t}(p_{1}(x^{\ast }))$ (top) and $\Phi
_{t}(p_{2}(x^{\ast }))$ (bottom) in black, and their fragments on $\Gamma
^{i,j}$, in green and red, for $x^{\ast }=-0.95.$}
\label{fig:Gamma095}
\end{figure}
\begin{figure}[tbp]
\begin{center}
\includegraphics[height=1.55in]{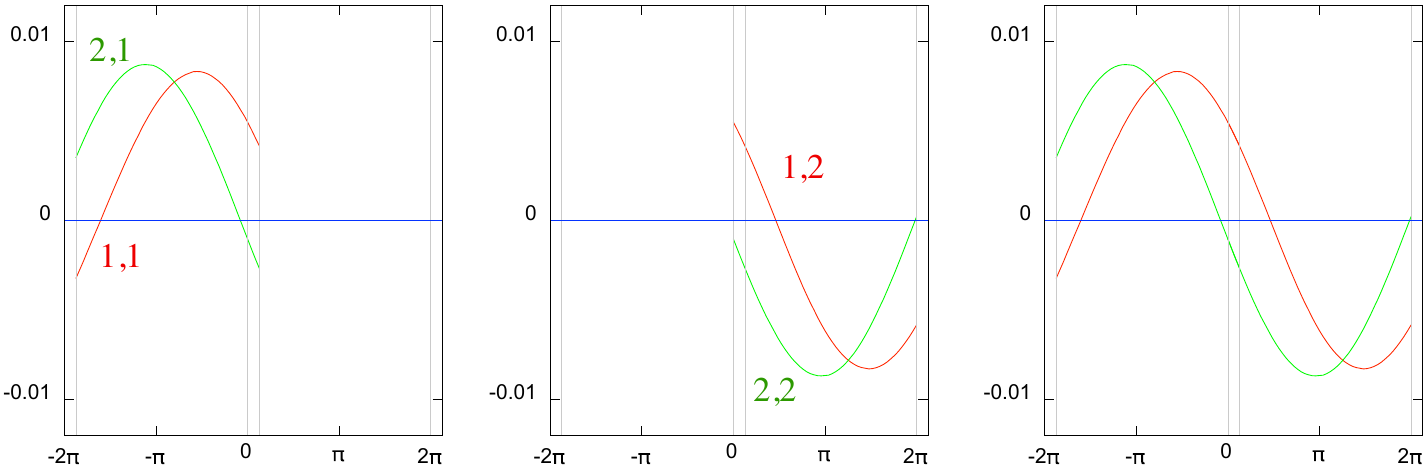}
\end{center}
\caption{The plots of $\protect\theta \rightarrow \frac{\partial S_{0,%
\protect\tau ^{\ast }}^{i,j}}{\partial \protect\theta }(s_{0}^{i,j}(x^{\ast
},\protect\theta )),$ for $\protect\tau ^{\ast }=0$ and $x^{\ast }=-0.95.$
The corresponding indexes $i,j$ are indicated on the plot. }
\label{fig:meln2dx}
\end{figure}

The $\Gamma ^{i,j}$ are indeed homoclinic channels, i.e. satisfy conditions
(i) and (ii) from Definition \ref{def:homoclinic-channel}. We describe how
this has been investigated. Condition (i) follows from the fact that $%
W^{s}\left( \Lambda \right) $ and $W^{u}\left( \Lambda \right) $ intersect
transversally. To show (ii) we shall first fix an $i\in \{1,2\}$ and
consider $\mathbf{x}=p_{i}(x^{\ast })\in \Gamma ^{i,1}\cup \Gamma ^{i,2}.$
We need to verify that%
\begin{equation}
T_{\mathbf{x}}\Gamma ^{i}\oplus T_{\mathbf{x}}W^{u}\left( \mathbf{x}%
_{-}\right) =T_{\mathbf{x}}W^{u}\left( \Lambda \right) .
\label{eq:Hom-chan-cond-2}
\end{equation}%
We shall use the fact that%
\begin{equation*}
T_{\mathbf{x}}\Gamma ^{i}=\mathrm{span}\left( \frac{d}{dx^{\ast }}%
p_{i}(x^{\ast }),F(p_{i}(x^{\ast }))\right) .
\end{equation*}%
Focusing on $x^{\ast }=-0.95,$ the numerical results are:%
\begin{equation}
\begin{array}{rrrr}
\mathrm{span}\left\{ \frac{d}{dx^{\ast }}p_{1}(x^{\ast })\right\} =\,\mathrm{%
span}\{(1, & 0, & 0, & -1.826312946)\},\medskip \\
\mathrm{span}\left\{ F(p_{1}(x^{\ast }))\right\} =\,\mathrm{span}\{(0, & 1,
& -1.60121149, & 0)\},%
\end{array}
\label{eq:Tgammai-num}
\end{equation}%
and%
\begin{equation}
\begin{array}{rrrr}
\mathrm{span}\left\{ \frac{d}{dx^{\ast }}p_{2}(x^{\ast })\right\} =\,\mathrm{%
span}\{(1, & 0, & 0, & -1.898743306)\},\medskip \\
\mathrm{span}\left\{ F(p_{2}(x^{\ast }))\right\} =\,\mathrm{span}\{(0, & 1,
& -1.503579624, & 0)\}.%
\end{array}
\label{eq:Tgammai-num-2}
\end{equation}%
Comparing (\ref{eq:Tgammai-num}) with (\ref{eq:vi-num}), it can easily be
checked that $v_{1},$ $\frac{d}{dx^{\ast }}p_{1}(x^{\ast }),$ $%
F(p_{1}(x^{\ast }))$ are linearly independent. From (\ref{eq:Tgammai-num-2})
and (\ref{eq:vi-num}) we also see that so are $v_{2},$ $\frac{d}{dx^{\ast }}%
p_{2}(x^{\ast }),$ $F(p_{2}(x^{\ast }))$. This implies (\ref%
{eq:Hom-chan-cond-2}). Transversality is preserved along the flow, which
demonstrates that (ii) holds for the unstable manifold. For the stable
manifold, (ii) follows from the symmetry of the PCR3BP.

We now move to the computation of $\frac{\partial S_{0,\tau ^{\ast }}^{i,j}}{%
\partial \theta }$. We take $\tau ^{\ast }=0.$ Employing Lemma \ref%
{lem:scatter-e3bp}, we can numerically compute and plot the functions
\begin{equation*}
\theta \rightarrow \frac{\partial S_{0,\tau ^{\ast }}^{i,j}}{\partial \theta
}(s_{0}^{i,j}(x^{\ast },\theta )),
\end{equation*}%
for $i,j=1,2$. The plots are given in Figure \ref{fig:meln2dx}.
\begin{figure}[tbp]
\begin{center}
\begin{tabular}{l}
\includegraphics[height=2.0in]{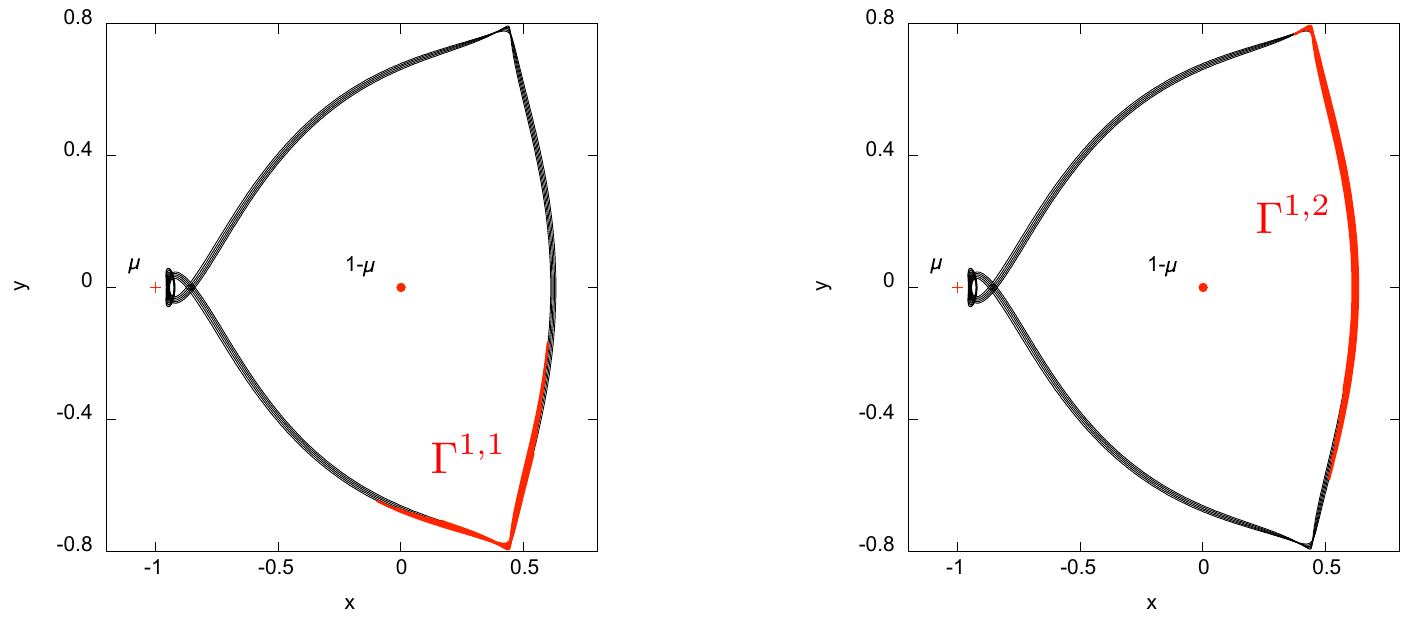} \\
\includegraphics[height=2.0in]{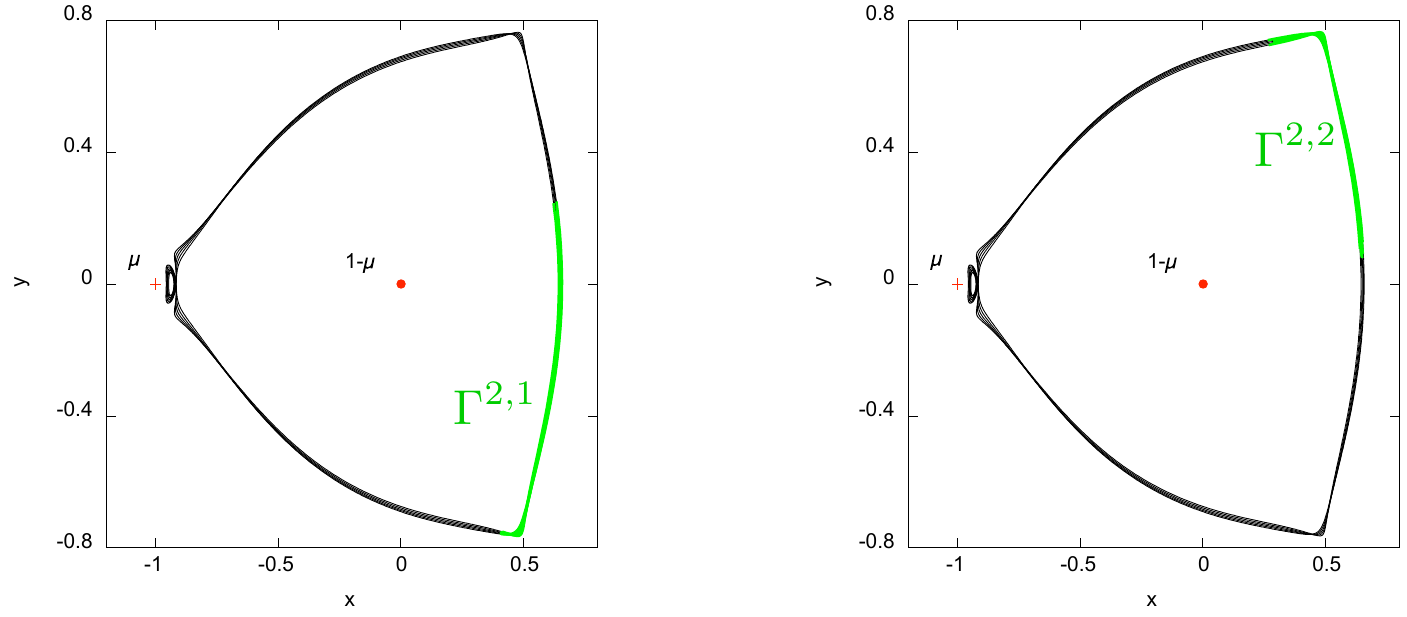}%
\end{tabular}%
\end{center}
\caption{Homoclinic orbit $\Phi _{t}(p_{1}(x^{\ast }))$ (top) and $\Phi
_{t}(p_{2}(x^{\ast }))$ (bottom) in black, together with the homoclinic
channels $\Gamma ^{i,j}$, in green and red, for $x^{\ast }\in \mathbf{I}%
=\left( -0.955,-0.945\right) $.}
\label{fig:Gamma}
\end{figure}
\begin{figure}[tbp]
\begin{center}
\includegraphics[height=1.55in]{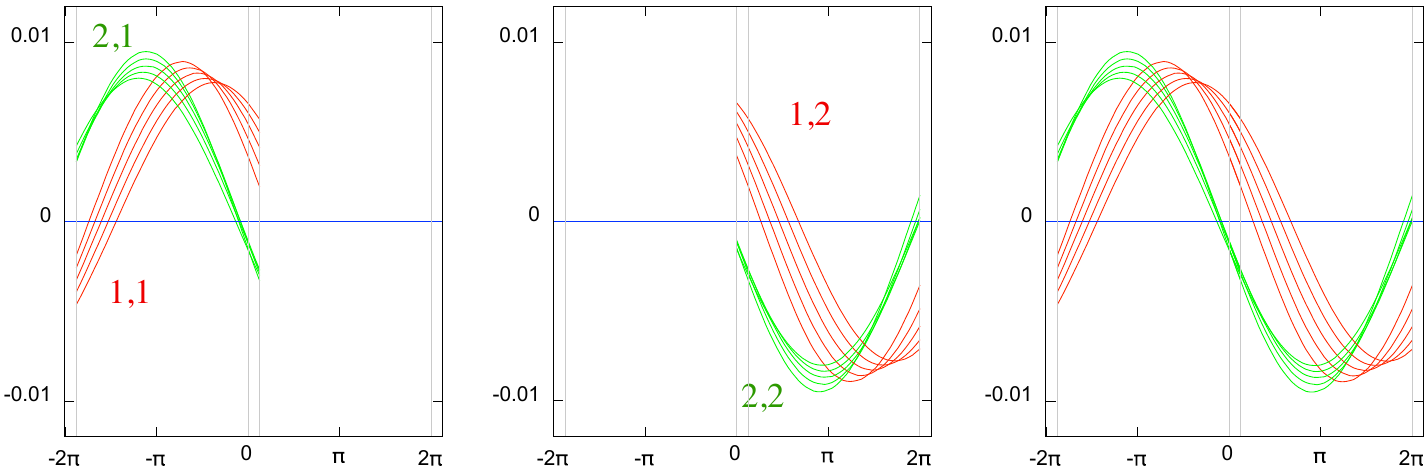}
\end{center}
\caption{The plots of $\protect\theta \rightarrow \frac{\partial S_{0,%
\protect\tau ^{\ast }}^{i,j}}{\partial \protect\theta }(s_{0}^{i,j}(x^{\ast
},\protect\theta )),$ for $\protect\tau ^{\ast }=0$ and $x^{\ast
}=-0.955,-0.9525,-0.95,-0.9475,-0.945$. The corresponding indexes $i,j$ are
indicated on the plot.}
\label{fig:meln2d}
\end{figure}

We have focused on the particular value $x^{\ast }=-0.95$ only because it is
a round number, and because for such $x^{\ast }$ the energy of the Lyapunov
orbit $L(x^{\ast })$ is close to the energy of the comet Oterma. For
different $x^{\ast }$ one observes similar plots. For instance, for any $%
x^{\ast }$ within a $5\cdot 10^{-3}$ distance of $-0.95$ one obtains plots
with the same properties (see Figures \ref{fig:Gamma} and \ref{fig:meln2d}).
This means that we can choose%
\begin{equation*}
\mathbf{I}=\left( -0.955,-0.945\right) .
\end{equation*}

In Figure \ref{fig:meln2d} we see that for any point on $L(x^{\ast })$ for $%
x^{\ast }\in \mathbf{I}$, by taking $\theta \leq \frac{\pi }{8}$ we can
always choose a scattering map for which condition (\ref{eq:scatter-grad-1})
is satisfied. Similarly, condition (\ref{eq:scatter-grad-2}) is satisfied
for $\theta \geq 0$. For any point on $L(x^{\ast })$ we can therefore choose
a scattering map satisfying (\ref{eq:scatter-grad-1}) or (\ref%
{eq:scatter-grad-2}). Thus, by Theorem \ref{th:diffusion-3bp}, the
computations give evidence of diffusion for any sufficiently small $%
\varepsilon $.

In Figure \ref{fig:Tx-Hx} we see the plots of $x^{\ast }\rightarrow
T(x^{\ast })$ and $x^{\ast }\rightarrow H(q(x^{\ast }))$. Clearly the
assumptions (\ref{eq:twist-cond}), (\ref{eq:dHq-ne-zero-1}), which are the
assumptions of Theorem \ref{th:kam-per3bp}, are reflected to hold in the
calculations.
\begin{figure}[tbp]
\begin{center}
\begin{tabular}{ll}
\includegraphics[height=2.2in]{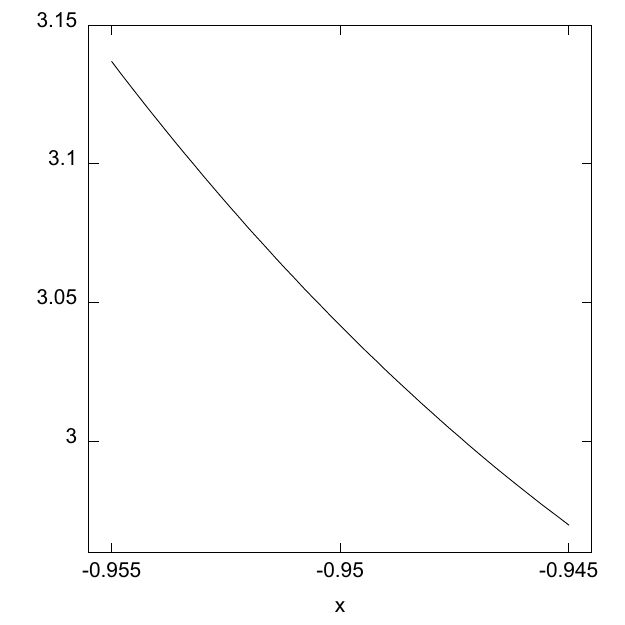} & %
\includegraphics[height=2.2in]{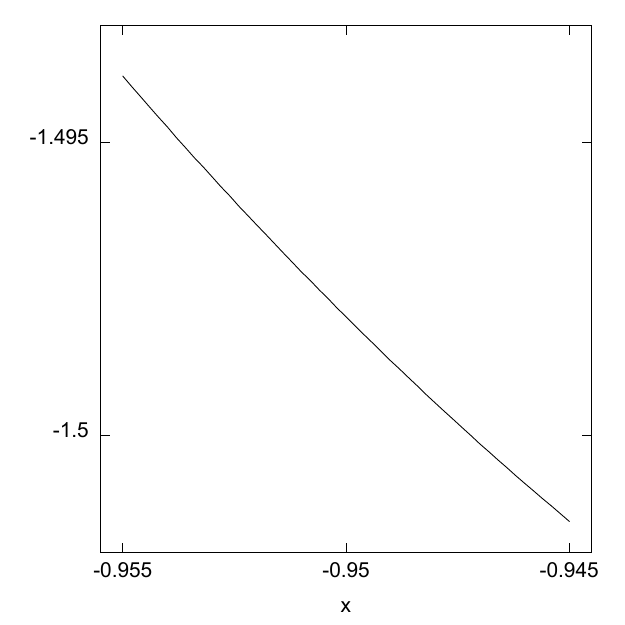}%
\end{tabular}%
\end{center}
\caption{The plots of $x^{\ast }\rightarrow T(x^{\ast })$ (left) and $%
x^{\ast }\rightarrow H(q(x^{\ast }))$ (right).}
\label{fig:Tx-Hx}
\end{figure}

The Lyapunov orbits $L(x^{\ast })$ for $x^{\ast }\in \mathbf{I}$ are
depicted in Figure \ref{fig:LapOrbits}. The width of the interval $\mathbf{I}
$, when translated to real distance in the Jupiter-Sun system, is roughly
equivalent to over $7$ million kilometers. We therefore see that our
mechanism leads to macroscopic diffusion distances. The homoclinic channels
depicted in Figure \ref{fig:Gamma} give us an idea of the paths along which
diffusion takes place and of its scale.

%TCIDATA{OutputFilter=latex2.dll}
%TCIDATA{Version=5.00.0.2606}
%TCIDATA{LaTeXparent=0,0,online-edit.tex}

\section{Future work\label{sec:future-work}}

In the forthcoming work we plan to validate the assumptions of Theorem \ref{th:diffusion-3bp}. To do so the following ingredients need to be checked.

We need to check that the homoclinic channels are well defined. This means that we need to validate the conditions from Definition \ref{def:homoclinic-channel}. These will follow from the fact that the stable and unstable manifolds of the family of the Lyapunov orbits intersect transversally. Such validation has already been performed in \cite{Ca1}, hence we do not anticipate any difficulties associated with this step.

We need to verify conditions (\ref{eq:twist-cond}), (\ref{eq:dHq-ne-zero-1}), needed for the Theorem \ref{th:kam-per3bp}. Again, based on the results obtained in \cite{Ca1}, we think that this verification should be straightforward, since it involves finite computation along Lyapunov orbits.  

The last step requires the verification of conditions (\ref{eq:scatter-grad-1}), (\ref{eq:scatter-grad-2}) which are the key assumptions of Theorem \ref{th:diffusion-3bp}. This last step will likely cause most difficulties. It involves a computation of integrals over reals. Over a finite time interval these can be validated using a rigorous computer assisted enclosure of the homoclinic orbits computed in \cite{Ca1}. What will remain is to obtain an estimate on the tails of the integrals, which will need to follow from analytic arguments. We believe that such validation is possible and plan to address the problem in the future.

We believe that our approach and the mechanism presented in this paper can be used to produce a computer assisted proof of diffusion in the restricted three body problem.

\section*{Acknowledgments}
We would like to thank the two anonymous reviewers for their suggestions and comments, which helped us to improve the manuscript.

The research of M.J. Capi\'nski was partially supported by the Polish National Science Center grant 2012/05/B/ST1/00355.  
The research of M. Gidea was partially supported by the NSF grant   DMS-1515851.
The research of R. de la Llave was partially supported by the NSF grant DMS-1500943. 
%TCIDATA{Version=5.00.0.2606}
%TCIDATA{LaTeXparent=0,0,MMFedit.tex}

\appendix

\section{Proof of Lemma \protect\ref{lem:scatter-e3bp}\label%
{sec:proof-scatter-lem}}

Let us fix $x^{\ast }\in \mathbf{I}$. In order to keep notations short, we
write $q$, $p_{1}$, $p_{2}$, $\omega _{1}$, $\omega _{2},$ $T$ for $%
q(x^{\ast })$, $p_{1}(x^{\ast })$, $p_{2}(x^{\ast })$, $\omega _{1}(x^{\ast
})$, $\omega _{2}(x^{\ast })$, $T(x^{\ast })$, respectively.

Equality (\ref{eq:s-for-3bp}) follows from (\ref{eq:s-eps-nonaut}).
Substituting (\ref{eq:wave-3bp-1}--\ref{eq:scatter-3bp}) into (\ref%
{eq:S0tau-def}) we see that%
\begin{eqnarray*}
S_{0,\tau }^{i,j}\left( x^{\ast },\theta \right) & =\int_{-\infty
}^{0}G\left( \Phi _{u}\circ \Phi _{\left( \theta +\omega _{i}\right) T/2\pi
}\left( p_{i}\right) ,\tau +u\right) \\
& \qquad \qquad \qquad -G\left( \Phi _{u}\circ k_{0}\left( x^{\ast },\theta
+2\omega _{i}\right) \mathbf{,}\tau +u\right) du \\
& \quad +\int_{0}^{+\infty }G\left( \Phi _{u}\circ \Phi _{\left( \theta
+\omega _{i}\right) T/2\pi }\left( p_{i}\right) ,\tau +u\right) \\
& \qquad \qquad \qquad -G\left( \Phi _{u}\circ k_{0}\left( x^{\ast },\theta
\right) \mathbf{,}\tau +u\right) du \\
& =\int_{-\infty }^{\theta T/2\pi }G\left( \Phi _{u+\omega _{i}T/2\pi
}\left( p_{i}\right) ,\tau +u-\theta T/2\pi \right) \\
& \qquad \qquad \qquad -G\left( \Phi _{u+2\omega _{i}T/2\pi }(q)\mathbf{,}%
\tau +u-\theta T/2\pi \right) du \\
& \quad +\int_{\theta T/2\pi }^{+\infty }G\left( \Phi _{u+\omega _{i}T/2\pi
}\left( p_{i}\right) ,\tau +u-\theta T/2\pi \right) \\
& \qquad \qquad \qquad -G\left( \Phi _{u}\left( q\right) ,\tau +u-\theta
T/2\pi \right) du.
\end{eqnarray*}%
This gives%
\begin{eqnarray*}
\frac{\partial S_{0,\tau }^{i,j}}{\partial \theta }\left( x^{\ast },\theta
\right) &=&\frac{T}{2\pi }[-G\left( \Phi _{\theta +2\omega _{i}T/2\pi
}\left( q\right) ,\tau \right) +G\left( \Phi _{\theta }\left( q\right) ,\tau
\right) \\
&&-\int_{-\infty }^{\theta T/2\pi }\frac{\partial G}{\partial t}\left( \Phi
_{u+\omega _{i}T/2\pi }\left( p_{i}\right) ,\tau +u-\theta T/2\pi \right) \\
&&\qquad \qquad -\frac{\partial G}{\partial t}\left( \Phi _{u+2\omega
_{i}}\left( q\right) ,\tau +u-\theta T/2\pi \right) du \\
&&-\int_{\theta T/2\pi }^{\infty }\frac{\partial G}{\partial t}\left( \Phi
_{u+\omega _{i}T/2\pi }\left( p_{i}\right) ,\tau +u-\theta T/2\pi \right) \\
&&\qquad \qquad -\frac{\partial G}{\partial t}\left( \Phi _{u}\left(
q\right) ,\tau +u-\theta T/2\pi \right) du],
\end{eqnarray*}%
hence by (\ref{eq:s0-c3bp}),%
\begin{eqnarray*}
& \frac{\partial S_{0,\tau }^{i,j}}{\partial \theta }\left(
s_{0}^{i,j}\left( x^{\ast },\theta \right) \right) \\
& =\frac{T}{2\pi }[-G\left( \Phi _{\theta }\left( q\right) ,\tau \right)
+G\left( \Phi _{\theta -2\omega _{i}T/2\pi }\left( q\right) ,\tau \right) \\
& \quad -\int_{-\infty }^{\left( \theta -2\omega _{i}\right) T/2\pi }\frac{%
\partial G}{\partial t}\left( \Phi _{u+\omega _{i}T/2\pi }\left(
p_{i}\right) ,\tau +u-\left( \theta -2\omega _{i}\right) T/2\pi \right) \\
& \qquad \qquad -\frac{\partial G}{\partial t}\left( \Phi _{u+2\omega
_{i}T/2\pi }\left( q\right) ,\tau +u-\left( \theta -2\omega _{i}\right)
T/2\pi \right) du \\
& \quad -\int_{\left( \theta -2\omega _{i}\right) T/2\pi }^{\infty }\frac{%
\partial G}{\partial t}\left( \Phi _{u+\omega _{i}T/2\pi }\left(
p_{i}\right) ,\tau +u-\left( \theta -2\omega _{i}\right) T/2\pi \right) \\
& \qquad \qquad -\frac{\partial G}{\partial t}\left( \Phi _{u}\left(
q\right) ,\tau +u-\left( \theta -2\omega _{i}\right) T/2\pi \right) du] \\
& =\frac{T}{2\pi }[-G\left( \Phi _{\theta }\left( q\right) ,\tau \right)
+G\left( \Phi _{\theta -2\omega _{i}T/2\pi }\left( q\right) ,\tau \right) \\
& \quad -\int_{-\infty }^{0}\frac{\partial G}{\partial t}\left( \Phi
_{u+\left( \theta -\omega _{i}\right) T/2\pi }\left( p_{i}\right) ,\tau
+u\right) -\frac{\partial G}{\partial t}\left( \Phi _{u+\theta T/2\pi
}\left( q\right) ,\tau +u\right) du \\
& \quad -\int_{0}^{\infty }\frac{\partial G}{\partial t}\left( \Phi
_{u+\left( \theta -\omega _{i}\right) T/2\pi }\left( p_{i}\right) ,\tau
+u\right) \\
& \quad -\frac{\partial G}{\partial t}\left( \Phi _{u+\left( \theta
-2\omega _{i}\right) T/2\pi }\left( q\right) ,\tau +u\right) du],
\end{eqnarray*}%
which concludes our proof. \hfill $\blacksquare $

\section*{References}
\bibliography{bib}{}
\bibliographystyle{plain}

\end{document}